\newtheorem{tm}{Theorem}
\newtheorem{proposition}[tm]{Proposition}
\newtheorem{lemma}[tm]{Lemma}
\newtheorem{corollary}[tm]{Corollary}
\theoremstyle{definition}
\newtheorem*{definition}{Definition}
\newtheorem{example}{Example}
\theoremstyle{remark}
\newtheorem*{acknowledgments}{Acknowledgments}
\newtheorem*{remark}{Remark}
\newtheorem*{remarks}{Remarks}
\newcommand{\R}{\mathbb{R}}
\newcommand{\Q}{\mathbb{Q}}
\newcommand{\Qbar}{\overline{\Q}}
\newcommand{\C}{\mathbb{C}}
\newcommand{\Z}{\mathbb{Z}}
\newcommand{\F}{\mathbb{F}}
\newcommand{\PP}{\mathbb{P}}
\newcommand{\J}{\mathrm{J}}
\newcommand{\X}{\mathrm{X}}
\newcommand{\Y}{\mathrm{Y}}
\newcommand{\tors}{\mathrm{tors}}
\DeclareMathOperator{\Aut}{Aut}
\DeclareMathOperator{\End}{End}
\DeclareMathOperator{\Gal}{Gal}
\DeclareMathOperator{\Hom}{Hom}
\DeclareMathOperator{\Res}{Res}
\DeclareMathOperator{\Spec}{Spec}
\DeclareMathOperator{\Sym}{Sym}
\def\diam#1{\langle#1\rangle}
\def\isom{\buildrel\sim\over\longrightarrow}
\def\lsup#1#2{{}^{#1}\!{#2}}
\begin{document}

\title{Ranks of elliptic curves with prescribed torsion over number fields}

\author{Johan Bosman}
\address{Mathematics Institute\\ University of Warwick\\ Coventry CV4 7AL\\ United Kingdom}
\email{J.Bosman@warwick.ac.uk}
\thanks{J.B.\ was supported by Marie Curie FP7 grant 252058.}
\author{Peter Bruin}
\address{Institut f\"ur Mathematik\\ Universit\"at Z\"urich\\ Winterthurerstrasse 190\\ 8057 Z\"urich\\ Switzerland}
\email{peter.bruin@math.uzh.ch}
\thanks{P.B. was supported by the program ``Points entiers et points rationnels'' (Agence nationale de la recherche, France), I.H.\'E.S., and Swiss National Science Foundation grant 124737.}
\author{Andrej Dujella}
\address{Department of Mathematics\\ University of Zagreb\\ Bijeni\v cka cesta 30\\ 10000 Zagreb\\ Croatia}
\thanks{A.D. and F.N. were supported by the Ministry of Science, Education, and Sports, Republic of Croatia, grant 037-0372781-2821.}
\email{duje@math.hr}
\author{Filip Najman}
\address{Department of Mathematics, University of Zagreb, Bijeni\v cka cesta 30, 10000 Zagreb, Croatia}
\email{fnajman@math.hr}
\thanks{F.N. was also supported by the National Foundation for Science, Higher Education, and Technological Development of the Republic of Croatia}

\date{}

\keywords{Elliptic curves, number fields, rank, torsion group}
\subjclass[2010]{11G05, 11R11, 11R16, 14H52}

\begin{abstract}
We study the structure of the Mordell--Weil group of elliptic curves over number fields of degree 2, 3, and~4.  We show that if $T$ is a group, then either the class of all elliptic curves over quadratic fields with torsion subgroup~$T$ is empty, or it contains curves of rank~0 as well as curves of positive rank.  We prove a similar but slightly weaker result for cubic and quartic fields.  On the other hand, we find a group~$T$ and a quartic field~$K$ such that among the elliptic curves over~$K$ with torsion subgroup~$T$, there are curves of positive rank, but none of rank~0.  We find examples of elliptic curves with positive rank and given torsion in many previously unknown cases.  We also prove that all elliptic curves over quadratic fields with a point of order 13 or 18 and all elliptic curves over quartic fields with a point of order 22 are isogenous to one of their Galois conjugates and, by a phenomenon that we call \emph{false complex multiplication}, have even rank.  Finally, we discuss connections with elliptic curves over finite fields and applications to integer factorization.
\end{abstract}

\maketitle

\section{Introduction}

For an elliptic curve $E$ over a number field $K$, the Mordell--Weil theorem states that the Abelian group $E(K)$ of $K$-rational points on $E$ is finitely generated.
The group $E(K)$ is isomorphic to $T\oplus\Z^r$, where $T$ is the torsion subgroup and $r$ is a non-negative integer called the \emph{rank} of the elliptic curve.

The aim of this paper is to study the interplay of the rank and torsion group of elliptic curves over quadratic, cubic, and quartic number fields. More explicitly, we will be interested in the following question: given a torsion group $T$ and an elliptic curve $E$ over a number field $K$ of degree 2, 3, or 4 such that $E(K)_\tors \simeq T$, what can we say about the rank of~$E(K)$?


A natural starting point is to wonder what can be said over~$\Q$. Let $T$ be a possible torsion group of an elliptic curve over $\Q$, i.e.\ it is one of the 15 groups from Mazur's torsion theorem \cite[Theorem~8]{maz}. What can be said about the rank? The short answer is: nothing. For all we know, it might be 0, it might be positive, it might be even, or it might be odd. As we will later show, this is in stark contrast with what happens over number fields. Note that already in \cite{kn} torsion groups $T$ and number fields $K$ are given such that every elliptic curve over $K$ with torsion $T$ has rank~0.

We start by examining a very basic question: how many points can an elliptic curve have over a quadratic, cubic, or quartic field and what group structure can these points have? As there exist elliptic curves with infinitely many points over~$\Q$, it is clear that an elliptic curve can possibly have infinitely many points over any number field. So, this question is closely related to determining for which finite groups $T$ there exists an elliptic curve over a field of degree 2, 3, or~4 with torsion~$T$ and rank~0.

Over $\Q$, the answer to this question is known by Mazur's torsion theorem \cite[Theorem~8]{maz}; it is an easy exercise to find an elliptic curve with fixed torsion and rank 0 over~$\Q$. An elliptic curve over $\Q$ has 1, \dots, 10, 12, 16, or infinitely many rational points.

We prove a similar result over all the quadratic fields using a theorem of Kamienny, Kenku, and Momose \cite[Theorem~3.1]{Kam1}, \cite[Theorem~(0.1)]{km}, which tells us which groups appear as torsion of elliptic curves over quadratic fields.
We find, for each of these torsion groups, an elliptic curve over some quadratic field having that particular torsion group and rank~$0$.

Note that if one fixes a number field and not just the degree, the situation is a bit different.  Mazur and Rubin \cite[Theorem~1.1]{mr} have proved that for each number field $K$ there exists an elliptic curve over $K$ such that the rank of $E(K)$ is~0. A natural question is whether the following generalization is true: if $T$ is a group that appears as a torsion group of an elliptic curve over $K$, does there exist some elliptic curve with torsion $T$ and rank~0? We prove that the answer is no, by giving an example of a quartic field $K$ such that every elliptic curve with torsion $\Z/15 \Z$ over~$K$ has positive rank.

It is known which groups appear \emph{infinitely often} as a torsion group of an elliptic curve over a cubic field \cite[Theorem~3.4]{jks}.  For each group $T$ from this list, we find a curve over a cubic field with rank~0 and torsion group~$T$.  We do the same for quartic fields, using the analogous result \cite[Theorem~3.6]{jkp}.  It is not known if any other groups can appear as torsion groups of elliptic curves over cubic or quartic fields.


Next, we examine elliptic curves with given torsion and positive rank over fields of degree 2, 3, and~4. This problem has been extensively studied over $\Q$; see for example \cite{du} for a list of references and rank records with given torsion. Over quadratic fields Rabarison \cite[Section~4]{rab} found examples of elliptic curves with given torsion and positive rank for all except possibly 4 torsion groups. We find examples of elliptic curves with positive rank unconditionally for all these groups.

We do the same for cubic and quartic fields and find many instances of elliptic curves with previously unknown Mordell--Weil groups.

When searching for elliptic curves with given torsion and positive rank, we noticed that all constructed elliptic curves with 13-torsion and 18-torsion over quadratic fields and all constructed elliptic curves with 22-torsion over quartic fields appeared to have even rank.
In Section~\ref{sec:false-cm} we prove that this is not a coincidence and that all elliptic curves with 13-torsion and 18-torsion over quadratic fields  and all elliptic curves with 22-torsion over quartic fields have even rank indeed. The explanation involves $\Q$-curves and $K$-curves, where $K$ is a quadratic field, and a phenomenon that we call \emph{false complex multiplication}.

In Section \ref{sec:aff} we examine the connections and applications of our results to elliptic curves over finite fields.
In the elliptic curve factoring method \cite{len}, one looks for elliptic curves whose number of points over~$\F_p$ for $p$ prime is likely to be smooth, i.e.\ divisible only by small primes.
It is now a classical method (see \cite{am} and~\cite{mon}) to use elliptic curves $E$ with large rational torsion, as the torsion injects into $E(\F_p)$ for all primes $p\geq 3$ of good reduction.
This in turn makes the order of $E(\F_p)$ more likely to be smooth.

One can get more information about the heuristic probability of an elliptic curve to have smooth order if the torsion of the curve is examined over both the rationals and number fields of small degree.
We give an explicit example of two curves such that the one with smaller rational torsion has smooth order more often and discuss the implications of this for choosing elliptic curves for factoring.

We used Magma \cite{mag}, Pari \cite{Pari}, and Sage \cite{sag} for most of our computations.

\begin{acknowledgments}
We are greatly indebted to Hendrik~W.~Lenstra, Jr.\ for ideas that motivated much of Section 4, including the terminology ``false complex multiplication''. We thank Thomas Preu for finding points of infinite order on the elliptic curve with torsion $\Z/18\Z$ over a quadratic field mentioned in Theorem~\ref{t6}.
\end{acknowledgments}

\section{Curves with prescribed torsion and rank 0}

We first examine elliptic curves over quadratic fields. By a theorem of Kamienny \cite[Theorem~3.1]{Kam1}, Kenku, and Momose \cite[Theorem~(0.1)]{km}, the following 26 groups can appear as a torsion group of an elliptic curve over a quadratic field:
\begin{equation}
\vcenter{\openup\jot\halign{$\hfil#\hfil$\cr
\Z/m \Z \text{ for } 1 \leq m\leq 18,\ m\neq 17,\cr
\Z/2 \Z \oplus \Z/2m \Z \text{ for } 1 \leq m\leq 6,\cr
\Z/3 \Z \oplus \Z/3m \Z \text{ for }  m=1,2,\cr
\Z/4 \Z \oplus \Z/4 \Z.\cr}}
\label{eq1}
\end{equation}

We used the \textsf{RankBound()} function in Magma and the program \textsf{mwrank}~\cite{mwr} for rank computations. Over quadratic fields Magma can easily prove that the rank of the curves we are going to list in Theorem~\ref{t1} is~0, so one does not have to employ any tricks to check the rank that will be needed in higher degree cases.

Note that it is easy to find examples with a torsion group that appears over $\Q$ and rank~$0$. 
By standard conjectures (see \cite{gol}), half of all elliptic curves should have rank~0.
One first finds a curve $E/\Q$ with given torsion and rank~0.  
Then one finds a squarefree $d\in\Z$ such that the quadratic twist $E^{(d)}$ has rank~0. 
The curve $E_{\Q(\sqrt{d})}$ has rank $0$ as well, and its torsion is likely the same as that of $E_\Q$.
We still give explicit examples, as we feel that this is in the spirit of this paper. For the rest of the groups, one can find infinite families of elliptic curves with given torsion in \cite{jk4} and \cite[Section~4]{rab} and then search for rank~0 examples.

\begin{tm}
\label{t1}
For each group $T$ from (\ref{eq1}), there exists an elliptic curve over a quadratic field with torsion $T$ and rank 0.
\end{tm}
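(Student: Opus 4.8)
The plan is to prove the theorem by explicit construction, splitting the list (\ref{eq1}) into two batches and exploiting that it is finite, so that only finitely many verifications are required.

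First I would dispose of the fifteen groups in (\ref{eq1}) that already arise over $\Q$, namely $\Z/m\Z$ with $m\in\{1,\dots,10,12\}$ and $\Z/2\Z\oplus\Z/2m\Z$ with $m\in\{1,2,3,4\}$. For each such $T$, Mazur's theorem guarantees curves over $\Q$ with torsion $T$, and it is routine to select one, say $E/\Q$, of rank $0$. The key point is that for a quadratic field $\Q(\sqrt d)$ one has
\[
\operatorname{rank} E(\Q(\sqrt d)) = \operatorname{rank} E(\Q) + \operatorname{rank} E^{(d)}(\Q),
\]
where $E^{(d)}$ denotes the quadratic twist by $d$, since $E(\Q(\sqrt d))\otimes\Q$ decomposes into the $\pm 1$ eigenspaces for the action of $\Gal(\Q(\sqrt d)/\Q)$. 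Hence it suffices to find a squarefree $d$ for which $E^{(d)}$ again has rank $0$; such $d$ exist in abundance (heuristically about half of all twists have rank $0$), and one is located by a short search with \textsf{mwrank}. One then checks that base change to $\Q(\sqrt d)$ does not enlarge the torsion subgroup, which holds for a generic choice of $d$ and is confirmed directly in each case.

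For the remaining eleven groups, which occur over quadratic fields but not over $\Q$ --- the cyclic groups $\Z/m\Z$ with $m\in\{11,13,14,15,16,18\}$, the groups $\Z/2\Z\oplus\Z/2m\Z$ with $m\in\{5,6\}$, and $\Z/3\Z\oplus\Z/3\Z$, $\Z/3\Z\oplus\Z/6\Z$, $\Z/4\Z\oplus\Z/4\Z$ --- I would start from the known one-parameter families parametrizing curves with each prescribed torsion over quadratic fields, as given in \cite{rab} and \cite{jk4}. Specializing the parameter produces explicit curves over explicit quadratic fields carrying exactly the required torsion; I would then scan these specializations and apply Magma's \textsf{RankBound()} to exhibit a member whose rank is provably $0$.

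The main obstacle is the rank computation rather than the construction: for each candidate one must certify that the rank is \emph{exactly} $0$, and not merely even or bounded above. Over quadratic fields, however, \textsf{RankBound()} typically returns a sharp bound of $0$ directly, so none of the descent tricks needed in the cubic and quartic settings are required here. The only other delicate point is that the passage to the quadratic field (in the twist argument) must preserve the torsion subgroup; this is verified case by case. Since (\ref{eq1}) is finite, the entire argument terminates after a bounded search.
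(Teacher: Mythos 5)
Your proposal is correct and follows essentially the same route as the paper: the authors likewise handle the groups occurring over $\Q$ by twisting a rank-$0$ curve over $\Q$ and using $\operatorname{rank} E(\Q(\sqrt d)) = \operatorname{rank} E(\Q) + \operatorname{rank} E^{(d)}(\Q)$, obtain the remaining groups by specializing the families in \cite{jk4} and \cite{rab}, and certify rank $0$ by $2$-descent (via \textsf{RankBound()} and \textsf{mwrank}), recording the resulting explicit curves in Table~\ref{table:rank0quadratic}.
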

\begin{proof}
In Table~\ref{table:rank0quadratic} we give explicit examples of elliptic curves with rank~0.
For all the curves listed, one checks that the rank is indeed 0 using a 2-descent.
\end{proof}

It is known which torsion groups appear infinitely often over cubic fields \cite[Theorem~3.4]{jks}:
\begin{equation}
\vcenter{\openup\jot\halign{$\hfil#\hfil$\cr
\Z/m \Z \text{ for } 1 \leq m\leq 20,\ m\neq 17,19,\cr
\Z/2 \Z \oplus \Z/2m \Z \text{ for } 1 \leq m\leq 7.\cr}}
\label{eq2}
\end{equation}
There is no known example of an elliptic curve over a cubic field with a torsion subgroup that is not in the list $(\ref{eq2})$, although it has not been proved that there are no such curves.

As the computation of rank bounds becomes more time-consuming in the cubic case, we first compute the parity (assuming the Birch--Swinnerton-Dyer conjecture) of the rank of the elliptic curve using the \textsf{RootNumber()} function in Magma and eliminate the curves with odd rank.

\begin{tm}
\label{t2}
For each group $T$ from \eqref{eq2}, there exists an elliptic curve over a cubic field with torsion $T$ and rank 0.
\end{tm}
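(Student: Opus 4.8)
The plan is to follow the strategy of Theorem~\ref{t1}: rather than prove a structural statement, I would exhibit an explicit witness for each group. For every $T$ in~\eqref{eq2} I would produce a single elliptic curve $E$ over a suitable cubic field $K$, collected in a table together with a Weierstrass model, the field $K$, and the verification that $E(K)_\tors \simeq T$ and $\mathrm{rank}\,E(K)=0$. The candidates themselves come from the parametrizations underlying \cite[Theorem~3.4]{jks}: each $T$ in~\eqref{eq2} occurs infinitely often, so there is a parametrized family of curves realizing it, and I would specialize the parameter to generate many candidate curves, each defined over an explicit cubic field $K$ over which the prescribed $T$ is rational by construction.

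For each candidate I first need to pin down the torsion exactly. The lower bound $T \subseteq E(K)_\tors$ is automatic from the family, while the upper bound I would obtain from reduction: for several primes $\mathfrak{p}$ of good reduction the group $E(K)_\tors$ injects into $E(\F_{\mathfrak{p}})$, and comparing a couple of well-chosen reductions usually forces $E(K)_\tors = T$ exactly. For the small cyclic groups that already occur over $\Q$ one could instead start from a rank-$0$ curve over $\Q$, but base change to a cubic field can both enlarge the torsion and raise the rank, so the family approach is cleaner and I would apply it uniformly.

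The genuine obstacle is verifying rank~$0$, which is markedly harder over cubic fields than over quadratics: a direct \textsf{RankBound()} computation is expensive and does not always terminate quickly. Here I would adopt the two-stage procedure described just before the theorem. First, for each candidate I compute the root number via \textsf{RootNumber()}; assuming the Birch--Swinnerton-Dyer conjecture this gives the parity of the rank, and I discard every curve of odd (hence conjecturally positive) rank, keeping only those of even parity as plausible rank-$0$ examples. Second, on the surviving candidates I run a $2$-descent (or \textsf{RankBound()}), retaining an example precisely when the $2$-Selmer computation yields the upper bound $\mathrm{rank}\,E(K) \le 0$. It is worth stressing that the root number serves only as a filter to avoid wasting time on curves that cannot be rank~$0$; the final conclusion $\mathrm{rank}\,E(K)=0$ rests on the descent alone and is therefore unconditional. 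The main risk is computational: for the largest torsion groups in~\eqref{eq2}, such as $\Z/20\Z$ or $\Z/2\Z \oplus \Z/14\Z$, the defining cubic field and conductor are large, the descent is correspondingly heavy, and one may need to sift through many specializations before a descent both terminates and returns the bound~$0$.
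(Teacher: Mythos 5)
Your proposal matches the paper's proof in essence: the authors also take specializations of the parametrized families (from \cite{jk3}) realizing each group in \eqref{eq2}, use the conjectural root number only as a filter to discard odd-rank candidates, and then establish rank~0 unconditionally by descent, presenting one explicit curve per group in a table. The only cosmetic difference is that the paper draws its curves from \cite{jk3} rather than the parametrizations of \cite{jks}, which does not affect the argument.
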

\begin{proof}
We give explicit examples in Table~\ref{table:rank0cubic}; again, one can easily check that each curve has rank~0. We obtained our curves from~\cite{jk3}.
\end{proof}

As with cubic fields, it is known which torsion groups appear infinitely often over quartic fields \cite[Theorem~3.6]{jkp}:
\begin{equation}
\vcenter{\openup\jot\halign{$\hfil#\hfil$\cr
\Z/m \Z \text{ for } 1 \leq m\leq 24,\ m\neq 19,23,\cr
\Z/2 \Z \oplus \Z/2m \Z \text{ for } 1 \leq m\leq 9,\cr
\Z/3 \Z \oplus \Z/3m \Z \text{ for } 1 \leq m\leq 3,\cr
\Z/4 \Z \oplus \Z/4m \Z \text{ for } 1 \leq m\leq 2,\cr
\Z/5 \Z \oplus \Z/5 \Z,\cr
\Z/6 \Z \oplus \Z/6 \Z.\cr}}
\label{eq3}\end{equation}
Again it is unknown whether there exists an elliptic curve over a quartic field with a torsion subgroup that is not in the list~$(\ref{eq3})$.

The computation of upper bounds on the rank of elliptic curves over quartic fields becomes much harder if it is done directly over the quartic field. However, we will use the fact that if an elliptic curve $E$ is defined over a number field $K$ and $L$ is an extension of $K$ of degree~2, i.e.\ $L=K(\sqrt d),\ d\in K$ and $d$ is not a square, then the rank of $E(L)$ is the sum of the rank of $E(K)$ and the rank of~$E^{(d)}(K)$. This can reduce the computation of the rank of an elliptic curve over a quartic field down to the computation of two ranks over quadratic fields or four computations over~$\Q$. Again, we always compute the conjectural parity of the rank of the elliptic curve before the actual computation of the rank.

\begin{tm}
\label{t3}
For each group $T$ from (\ref{eq3}) there exists an elliptic curve over a quartic field with torsion $T$ and rank 0.
\end{tm}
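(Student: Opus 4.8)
The goal is to exhibit, for each of the torsion groups $T$ in the list~(\ref{eq3}), a single elliptic curve $E$ over a suitable quartic field $K$ with $E(K)_\tors\simeq T$ and $\mathrm{rank}\,E(K)=0$. The overall strategy parallels Theorems~\ref{t1} and~\ref{t2}: construct candidate curves from known parametrized families realizing each torsion group, and then verify that the rank is zero. The plan is therefore to proceed in two stages: first produce explicit curves with the correct torsion, and then certify that each has rank~$0$.

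For the first stage, I would draw curves with prescribed torsion from the literature. Just as Theorem~\ref{t2} used the families in~\cite{jk3}, there are analogous parametrizations of elliptic curves over quartic fields attaining each group in~(\ref{eq3}), and the defining paper~\cite{jkp} for the list itself furnishes such families. For the groups already occurring over $\Q$ or over quadratic fields, one can start from a rational or quadratic curve of rank~$0$ with the right torsion and base-change it, or take an appropriate twist, so that the torsion is preserved while the rank stays~$0$; for the genuinely quartic torsion groups (for instance $\Z/4\Z\oplus\Z/8\Z$, $\Z/3\Z\oplus\Z/9\Z$, $\Z/5\Z\oplus\Z/5\Z$, or the higher cyclic groups) one must instead specialize the parametrized families at well-chosen parameter values. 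The torsion of each resulting curve can then be confirmed directly by computing $E(K)_\tors$ in Magma.

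The second, harder stage is the rank computation. As noted in the discussion preceding the statement, a direct rank bound over a quartic field is expensive. The key reduction to exploit is that if $K$ contains a quadratic subfield $F$ with $K=F(\sqrt d)$ for some $d\in F$, then $\mathrm{rank}\,E(K)=\mathrm{rank}\,E(F)+\mathrm{rank}\,E^{(d)}(F)$, so that an upper bound of $0$ on each of the two ranks over $F$ yields $\mathrm{rank}\,E(K)=0$; when $F$ is itself biquadratic over $\Q$ this splits further into four rank computations over $\Q$. I would therefore aim to select $K$ so that it is not primitive over $\Q$ but contains a quadratic subfield, allowing the quartic problem to be replaced by two quadratic (or four rational) rank~$0$ verifications via \textsf{RankBound()} and \textsf{mwrank}. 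To make these computations tractable, I would first compute the conjectural parity of the rank using \textsf{RootNumber()} and discard any candidate curve whose analytic rank is forced to be odd before attempting the descent.

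The main obstacle I expect is the rank certification rather than the torsion realization: unlike the quadratic case, where Magma proves rank~$0$ directly, here the $2$-descent computations over quadratic fields can fail to return a sharp bound, so some candidate curves will yield only $\mathrm{rank}\le 2$ and must be replaced by other specializations until a provably rank-$0$ example is found. A secondary difficulty is ensuring that a quartic field with the desired non-primitive structure actually supports the given torsion group, since the field of definition of the full torsion is constrained by the parametrization; for those groups whose parametrizing field is primitive over $\Q$, one is forced back to a direct quartic rank bound. As in Theorems~\ref{t1} and~\ref{t2}, the concrete outcome would be recorded as a table of explicit curves (analogous to Table~\ref{table:rank0quadratic} and Table~\ref{table:rank0cubic}), each annotated with its field, torsion group, and the verified rank~$0$.
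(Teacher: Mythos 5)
Your overall strategy matches the paper's: take curves over smaller fields where possible and otherwise specialize the quartic families (the paper draws these from \cite{jk4}, the companion paper to \cite{jkp}, which you cite for this purpose), confirm the torsion computationally, sieve candidates by conjectural parity via root numbers, and reduce the quartic rank bound to two quadratic or four rational computations using $\mathrm{rank}\,E(K)=\mathrm{rank}\,E(F)+\mathrm{rank}\,E^{(d)}(F)$; the results are recorded in a table exactly as you describe (Table~\ref{table:rank0quartic}).

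The genuine gap is in your fallback when $2$-descent fails. You propose to discard any candidate whose descent returns only $\mathrm{rank}\le 2$ and move on to another specialization, but the paper's worked example shows this is not an adequate escape. For $T=\Z/2\Z\oplus\Z/16\Z$ the curve found, $y^2=x^3+\frac{12974641}{13176900}x^2+\frac{16}{14641}x$ over $\Q(\sqrt{-330},\sqrt{-671})$, has two twists $E^{(-330)}$ and $E^{(1871)}$ whose $2$-primary Tate--Shafarevich groups are isomorphic to $\Z/4\Z\oplus\Z/4\Z$, so no amount of $2$-descent can ever certify rank $0$ for them; this obstruction is intrinsic, not an artifact of an unlucky choice of curve, and candidates with such large torsion are too scarce for repeated re-specialization to be a reliable remedy. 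The paper closes this with a tool missing from your proposal: it approximates $L(E,1)$ numerically, verifies that it is nonzero, and invokes Kolyvagin's theorem that $L(E,1)\neq 0$ forces rank $0$ (an $8$-descent in Magma is used to identify the Tate--Shafarevich obstruction). Note also why your parity sieve cannot substitute for this: root numbers guarantee even analytic rank, but they cannot distinguish rank $0$ with nontrivial Tate--Shafarevich group from rank $2$, which is precisely the ambiguity that arises here.
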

\begin{proof}
We give explicit examples of such curves in Table~\ref{table:rank0quartic}. We used curves defined over smaller fields wherever possible. Curves that could not be obtained in such a way were obtained from~\cite{jk4}.

Unlike in Tables \ref{table:rank0quadratic} and~\ref{table:rank0cubic}, here the computation of the rank is usually very hard. We solve one example, $(2,16)$, in detail to give a flavor of how our curves were obtained.

We start our search by looking for elliptic curves $E$ over the rationals with torsion $\Z/ 2\Z \oplus \Z/ 8\Z$ and rank~0, where $P$ is a point of order $8$ and $Q$ is of order $2$ such that $P$ and $Q$ generate the complete torsion. Next, we search for a point $R\in E(\Qbar)$ with~$2R=P$. There will be 4 such choices for~$R$. We can obtain the field of definition of the points $R$ by factoring the 16th division polynomial of our elliptic curve. There will be five factors of degree $4$: one for each choice of $R$, and a fifth one that will generate the field of definition of the point $Q_1$ satisfying $2Q_1=Q$ (over the field of definition of $Q_1$, our elliptic curve will have torsion isomorphic to $\Z/ 4\Z \oplus \Z/ 8\Z$). Over each field $K$ we check the conjectural parity of our starting elliptic curve over~$K$. If it is odd, we eliminate the field and move to the next. If we get a curve $E$ that has even rank over $\Q(\sqrt{d_1}, \sqrt{d_2})$, then we check the parity of the curves  $E^{(d_1)}, E^{(d_2)}, E^{(d_1\cdot d_2)}$ and eliminate the field if any of the curves have odd rank. If all the fields are eliminated, we move to the next elliptic curve.

After some searching we find the elliptic curve
$$ y^2 = x^3+12974641/13176900x^2 + 16/14641x$$
over the field $\Q(\sqrt{-330}, \sqrt{-671})$. One can easily compute that the ranks of $E$ and $E^{(-671)}$ are $0$ by $2$-descent in \textsf{mwrank}, but proving that the other two twists, $E^{(-330)}$ and $E^{(1871)}$, have rank 0 can not be done by $2$-descent. Actually, both curves have the $2$-primary part of their Tate--Shafarevich group isomorphic to $\Z/ 4\Z \oplus \Z/ 4\Z$, which can be checked by doing an $8$-descent in Magma. In fact, the Tate--Shafarevich group of $E^{(-330)}$ is conjecturally isomorphic to $\Z/ 12\Z \oplus \Z/ 12\Z$. However, one can prove that the twisted curves have rank 0 by approximating the $L$-value $L(E,1)$ and using Kolyvagin's result that an elliptic curve $E$ over~$\Q$ with $L(E,1)\ne0$ has rank~0. We used Sage for this computation.
\end{proof}

From Theorems \ref{t1}, \ref{t2}, and \ref{t3} we obtain the following result about the number of points of an elliptic curve over a number fields of degree 2, 3, and 4.

\begin{corollary}
\begin{enumerate}
\item An elliptic curve over a quadratic field has $1$, \dots, $16$, $18$, $20$, $24$, or infinitely many points. All of the cases occur.
\item There exist elliptic curves over cubic fields with $1$, \dots, $15$, $16$, $18$, $20$, $24$, $28$, and infinitely many points.
\item There exist elliptic curves over quartic fields with $1$, \dots, $17$, $18$, $20$, $21$, $22$, $24$, $25$, $27$, $28$, $32$, $36$, and infinitely many points.
\item Up to isomorphism, there exist only finitely many elliptic curves over cubic and quartic fields for which the number of points is not in these lists.
\end{enumerate}
\end{corollary}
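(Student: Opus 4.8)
The plan is to translate the statement about point counts into one about torsion orders. For an elliptic curve $E$ over a number field $K$, the group $E(K)$ is finite if and only if its rank is $0$, in which case $|E(K)| = |E(K)_\tors|$; otherwise $E(K)$ is infinite. Hence the finite values appearing in each list should be exactly the orders $|T|$ of those torsion groups $T$ that are realized together with rank $0$, while ``infinitely many'' records the existence of a positive-rank curve.

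For part (1), I would first read off the orders of the $26$ groups in (\ref{eq1}): the cyclic groups contribute $\{1,\ldots,16,18\}$, the groups $\Z/2\Z\oplus\Z/2m\Z$ add $20$ and $24$, and the remaining groups contribute orders already present. This gives exactly $\{1,\ldots,16,18,20,24\}$. Theorem~\ref{t1} realizes each $T$ in (\ref{eq1}) with rank $0$, so every one of these values is attained; a positive-rank curve (for instance the base change to a quadratic field of a positive-rank curve over $\Q$) attains ``infinitely many''. Since the list (\ref{eq1}) is exhaustive by Kamienny--Kenku--Momose, no other finite value can occur, which is what makes part~(1) a complete classification (``all cases occur''). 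Parts~(2) and~(3) are handled the same way: the orders arising from (\ref{eq2}) and (\ref{eq3}) are computed to be the displayed lists, and Theorems~\ref{t2} and~\ref{t3} realize each group with rank~$0$. Here I would claim only existence rather than exhaustiveness, because the torsion of elliptic curves over cubic and quartic fields is not completely classified.

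The main content, and the step I expect to be the real obstacle, is part~(4). A curve whose point count lies outside the relevant list must have finitely many points, hence rank~$0$, hence a torsion group $T$ with $|T|$ outside the list; by the order computations above, $T$ cannot belong to (\ref{eq2}) or (\ref{eq3}). The crucial input is then that any torsion group \emph{not} appearing in these ``infinitely often'' classifications \cite{jks,jkp} is the torsion subgroup of only finitely many elliptic curves over cubic (respectively quartic) fields up to isomorphism, and that, by Merel's uniform boundedness theorem, only finitely many groups occur as torsion in degree at most $4$ in the first place. Combining these, there are finitely many such sporadic groups, each realized by finitely many curves, so finitely many exceptional curves in total. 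The delicate point is precisely this interpretation of the cited classifications together with uniform boundedness; the order bookkeeping underlying parts~(1)--(3) is routine.
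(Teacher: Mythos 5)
Your proposal is correct and takes essentially the same approach as the paper: parts (1)--(3) are deduced from Theorems \ref{t1}--\ref{t3} together with the order bookkeeping for the lists (\ref{eq1})--(\ref{eq3}), and part (4) from Merel's theorem combined with the fact that (\ref{eq2}) and (\ref{eq3}) are precisely the lists of groups occurring \emph{infinitely often}. The paper states all of this very tersely (it only explicitly cites Merel for part (4)); your rank-$0$/torsion-order translation and the finiteness argument are exactly the implicit content.
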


The last part depends on Merel's theorem~\cite{Merel}: for all $d\ge1$, there are only finitely many groups that occur as torsion subgroups of elliptic curves over number fields of degree~$d$.  We also note that no curves with a different number of points are currently known.  Furthermore, we do not know whether all the listed possibilities occur infinitely often.

\section{Curves with prescribed torsion and positive rank}

As mentioned in the introduction, Mazur and Rubin \cite[Theorem~1.1]{mr} proved that over each number field there exists an elliptic curve with rank~0. We can reinterpret this theorem in the following way: if we look among all elliptic curves over a number field $K$, we will find a rank 0 curve. It is natural to ask whether this statement holds true if one only looks among elliptic curves satisfying some condition. We prove that the statement is not true if one looks only at elliptic curves with prescribed torsion over some fixed number field.

\begin{tm}
\label{t5}
There is exactly one elliptic curve up to isomorphism over the quartic field $\Q(i,\sqrt{5})$ having torsion subgroup $\Z /15\Z$, and it has positive rank.
\end{tm}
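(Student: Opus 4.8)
The plan is to recast the statement as a computation on the modular curve $X_1(15)$, which parametrizes pairs $(E,P)$ where $E$ is an elliptic curve and $P$ a point of exact order~$15$. This curve has genus~$1$ and is in fact an elliptic curve over~$\Q$ (the curve \texttt{15a8} in Cremona's tables); fix such a model $C/\Q$ together with the classical parametrization that recovers $(E,P)$, and in particular the $j$-invariant of~$E$, from a non-cuspidal point of~$C$. Under this dictionary, the elliptic curves over $K=\Q(i,\sqrt5)$ that possess a point of order~$15$ are precisely the images of the non-cuspidal points of $C(K)$, and those with torsion subgroup \emph{equal} to $\Z/15\Z$ form a subset. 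The theorem thus reduces to an explicit determination of the Mordell--Weil group $C(K)$, and the first thing to establish is that $C(K)$ is finite: if it were infinite, the map $C\to X(1)$ would produce infinitely many pairwise non-isomorphic curves with a point of order~$15$, and hence infinitely many with torsion exactly~$\Z/15\Z$, contradicting the statement.

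To prove $\operatorname{rank}C(K)=0$ I would use that $K$ is the biquadratic field $\Q(\sqrt{-1},\sqrt5)$, with quadratic subfields $\Q(i)$, $\Q(\sqrt5)$, and $\Q(\sqrt{-5})$, so that, by the rank formula for quadratic extensions already recalled before Theorem~\ref{t3},
\[
\operatorname{rank}C(K)=\operatorname{rank}C(\Q)+\operatorname{rank}C^{(-1)}(\Q)+\operatorname{rank}C^{(5)}(\Q)+\operatorname{rank}C^{(-5)}(\Q).
\]
It then suffices to show that $C$ and its three quadratic twists all have rank~$0$ over~$\Q$, by a $2$-descent where possible and, for any twist that resists it, by computing $L(\,\cdot\,,1)\neq0$ and invoking Kolyvagin's theorem, exactly as in the proof of Theorem~\ref{t3}.

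With finiteness in hand, $C(K)=C(K)_\tors$ can be enumerated directly. It consists of the cusps of $X_1(15)$ that become rational over~$K$ together with the non-cuspidal $K$-points; the parametrization lets me tell them apart, the cusps being the points where the parametrizing functions degenerate. The non-cuspidal points carry the content. Since the diamond operator $\diam d$, for $d\in(\Z/15\Z)^\times/\{\pm1\}$, sends $(E,P)$ to $(E,dP)$ and therefore fixes the isomorphism class of~$E$, a full diamond orbit of non-cuspidal points yields one and the same curve $E$ up to isomorphism. I would check by the explicit enumeration of the finite group $C(K)_\tors$ that its non-cuspidal points form a single diamond orbit; this produces exactly one elliptic curve $E/K$ carrying a point of order~$15$.

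Finally, for this curve $E$ I would write down a Weierstrass model from the parametrization, confirm that its torsion over~$K$ is exactly $\Z/15\Z$ (ruling out any enlargement such as $\Z/30\Z$ or $\Z/3\Z\oplus\Z/15\Z$ by verifying that the relevant division polynomials acquire no further roots over~$K$), and exhibit a point of infinite order in $E(K)$, which is all that positivity of the rank requires. The principal obstacle is the rigorous computation of $C(K)$: establishing rank~$0$ for all four rational twists, where the analytic rank-zero verifications are the delicate step, and confirming that over~$K$ no torsion appears beyond the cusps and the single diamond orbit of non-cuspidal points.
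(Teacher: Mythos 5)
Your proposal is correct and follows essentially the same route as the paper: both reduce the theorem to computing the Mordell--Weil group of $\X_1(15)$ over $\Q(i,\sqrt5)$ (the paper does this in two stages, computing $\X_1(15)(\Q(\sqrt{5}))\simeq\Z/8\Z$ and then checking that no new points appear over the quartic field, whereas you propose the four-twist rank decomposition over the biquadratic field directly), then identify the four non-cuspidal points as a single diamond orbit yielding one curve $E$, and finally exhibit a point of infinite order on $E$. The differences are purely organizational, so there is nothing substantive to add.
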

\begin{proof}
(Compare \cite[Theorem 5]{kn}.) Over $\Q(\sqrt{5})$ there is exactly one curve with torsion $\Z/ 15 \Z$, namely
$$
E\colon y^2 = x^3 + (281880\sqrt{5} - 630315)x + 328392630-146861640\sqrt{5},
$$
and one of the points of order~15 is
$$
P=(315-132\sqrt 5,5400-2376\sqrt 5).
$$
We take an explicit affine model of $\X_1(15)$,
$$\X_1(15)\colon y^2+xy+y=x^3+x^2,$$
which can be found in \cite{yan}, and compute
$$
\X_1(15)(\Q(\sqrt{5}))\simeq\Z/8\Z.
$$
Four of the points are cusps. The other four correspond to the pairs $(E, \pm P)$, $(E, \pm 2P)$, $(E, \pm 4P)$, $(E, \pm 7P)$. Over $\Q(\sqrt{5},i)$, no extra points of $\X_1(15)$ appear, so $E$ remains the only curve with torsion~$\Z/15\Z$. In addition, $E$ acquires the point $(-675+300\sqrt{5},(2052\sqrt{5}-4590)i)$ of infinite order over~$\Q(i,\sqrt{5})$.
\end{proof}

Finding elliptic curves with high rank and prescribed torsion has a long history, which can be seen at the webpage \cite{du}, where there is a list of over 50 references about this problem.

Finding elliptic curves over number fields with prescribed torsion and positive rank has first been done by Rabarison \cite{rab}, who studied elliptic curves over quadratic fields. He managed to find elliptic curves over quadratic fields with positive rank and torsion $\Z/11\Z$, $\Z/13\Z$, $\Z/14\Z$, $\Z/16\Z$, $\Z /3\Z \oplus \Z/3 \Z$, $\Z /3\Z \oplus \Z/6 \Z$, and $\Z /4\Z \oplus \Z/4 \Z$.

\begin{tm}
\label{t6}
There exist elliptic curves over quadratic fields whose Mordell--Weil groups contain $\Z/15 \Z\oplus\Z$, $\Z/18\Z\oplus\Z^2$, $\Z /2\Z \oplus \Z/10 \Z\oplus\Z^4$, and $\Z /2\Z \oplus \Z/12 \Z\oplus\Z^4$.
\end{tm}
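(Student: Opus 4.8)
The plan is to produce, for each of the four prescribed groups, a single explicit elliptic curve over a suitable quadratic field and to verify its Mordell--Weil group directly. Since the statement only asserts existence, it suffices in each case to exhibit a curve whose torsion contains the required group and to produce enough independent points of infinite order to force the rank to be at least $1$, $2$, $4$, and $4$ respectively. The torsion part is the easy half: for each of the groups $\Z/15\Z$, $\Z/18\Z$, $\Z/2\Z\oplus\Z/10\Z$, and $\Z/2\Z\oplus\Z/12\Z$ there are known infinite families of elliptic curves over quadratic fields realizing that torsion, parametrized by a rational parameter together with a quadratic field depending on it; see \cite{jk4} and \cite[Section~4]{rab}. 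First I would take such a family and run through specializations of the parameter, retaining only those for which the quadratic field and the Weierstrass coefficients stay within a workable range.

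For each resulting curve $E/K$ the real work is bounding the rank from below. I would search for points of infinite order directly on a Weierstrass model, sieving over $K$-points of bounded height and, where a naive search fails, using a $2$-descent (either over $K$ directly, or over $\Q$ via the splitting $\operatorname{rank}E(\Q(\sqrt d))=\operatorname{rank}E(\Q)+\operatorname{rank}E^{(d)}(\Q)$ recalled before Theorem~\ref{t3}) to produce generators. Given candidate points $P_1,\dots,P_r$, independence is settled by computing the N\'eron--Tate height-pairing matrix $\bigl(\langle P_i,P_j\rangle\bigr)_{i,j}$ and checking that its determinant, the regulator, is nonzero; a nonvanishing regulator guarantees that the $P_i$ are $\Z$-linearly independent modulo torsion, so that $\operatorname{rank}E(K)\ge r$. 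Combined with the guaranteed torsion this shows $E(K)\supseteq T\oplus\Z^r$ with $r$ as required, which is exactly what the ``contains'' formulation of the theorem demands.

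Concretely, the $\Z/15\Z$ case asks only for a single point of infinite order and should be reached quickly by examining quadratic points on $\X_1(15)$ and the curves they parametrize. The $\Z/18\Z$ case requires two independent points, and the two groups $\Z/2\Z\oplus\Z/10\Z$ and $\Z/2\Z\oplus\Z/12\Z$ require four independent points each.

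The main obstacle is the point search itself rather than the verification: curves with prescribed torsion and rank as large as $4$ are scarce in these families, so finding a good specialization forces a wide sweep over the parameter, and the coordinates of the generating points tend to be large and to live over quadratic fields of large discriminant. The $\Z/18\Z$ example is the one I expect to be hardest to pin down---the points of infinite order there are precisely the ones whose discovery the paper credits to Thomas Preu---whereas once all the points are in hand, the regulator computation that certifies independence is entirely routine.
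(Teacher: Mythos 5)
Your verification layer (torsion from known families, independence certified by a nonvanishing N\'eron--Tate regulator) is sound, and for $\Z/15\Z\oplus\Z$ your plan essentially coincides with the paper's: construct curves from quadratic points of $\X_1(15)$, sieve, and find one point of infinite order. The genuine gap is in the two rank-$4$ cases. Sweeping a parametrized family of curves with torsion $\Z/2\Z\oplus\Z/10\Z$ or $\Z/2\Z\oplus\Z/12\Z$ over quadratic fields and hoping to hit a rank-$4$ specialization is not a workable strategy: rank-$4$ curves have density zero in any such family, each candidate requires an expensive rank computation over a quadratic field, and certifying rank $\ge4$ demands four independent points whose heights are typically far beyond any naive search. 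You acknowledge this obstacle but do not resolve it. The paper's proof runs in the opposite direction: it starts from curves over $\Q$ with torsion $\Z/10\Z$ (resp.\ $\Z/12\Z$) and rank $\ge4$, which already exist in the rank-record tables \cite{du}, and notes that such a curve has exactly one rational point of order~$2$ (Mazur's theorem forbids $\Z/2\Z\oplus\Z/10\Z$ and $\Z/2\Z\oplus\Z/12\Z$ over $\Q$), so the quadratic factor of its $2$-division polynomial cuts out the \emph{unique} quadratic field $K$ over which full $2$-torsion appears. Since $\operatorname{rank}E(K)=\operatorname{rank}E(\Q)+\operatorname{rank}E^{(d)}(\Q)\ge\operatorname{rank}E(\Q)$, the four independent rational points remain independent over $K$, and the torsion grows to the required group for free: no point search over a quadratic field is needed at all. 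This transfer trick is the missing idea without which those two cases do not get off the ground.

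For $\Z/18\Z\oplus\Z^2$ you also miss the structural fact that makes the example reachable: by Theorem~\ref{theorem:false-CM}, every elliptic curve over a quadratic field with a point of order~$18$ has false complex multiplication by $\Q(\sqrt{-2})$, hence even rank, unconditionally. This has two consequences exploited in the paper. First, parity-based sieving is useless here (every candidate has even conjectural rank), so the paper instead flags curves whose $L$-value $L(E,1)$ numerically vanishes, indicating rank $\ge2$ under BSD. Second, and more importantly, once a \emph{single} point of infinite order is found --- and that single point is exactly the hard step, carried out by Preu --- the evenness of the rank already forces rank $\ge2$; concretely, the second independent generator is obtained by applying the $\Z[\sqrt{-2}]$-action of Subsection~\ref{subsec:18} to the first, not by another search. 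Your plan requires finding two independent points by unstructured search, and the second of these would in practice be out of reach without the endomorphism.
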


\begin{proof}
We give explicit examples of such curves in Table~\ref{table:rank+quadratic}.

Suppose $E$ is an elliptic curve with exactly one point of order 2 over some number field~$K$. Note that the $2$-division polynomial of $E$, $\psi_2$, is of degree~3, so $\psi_2$ factors over $K$ as a linear polynomial times a quadratic polynomial. This implies that there is exactly one quadratic extension of $K$ over which $E$ has full $2$-torsion, which can easily be found (the splitting field of the quadratic factor).

We start with elliptic curves with high rank over $\Q$ and torsion $\Z/10\Z$ and $\Z/12 \Z$ over~$\Q$. Explicit examples of such curves are given in~\cite{du}. We take such a curve and use the fact that it has exactly one rational 2-torsion point. We then apply the above method to construct elliptic curves with high rank and torsion $\Z /2\Z \oplus \Z/10 \Z$ and $\Z /2\Z \oplus \Z/12 \Z$ over some quadratic field.
In this way, we find the two corresponding curves in Table~\ref{table:rank+quadratic}.



To find an elliptic curve with torsion $\Z/15 \Z$ and positive rank, we search for points on $\X_1(15)$ and then from them construct elliptic curves. Among those elliptic curves, we sieve for the ones that have conjecturally odd rank. Then we are left with the problem of finding a point of infinite order on such a curve. We managed to find such a point by looking at small multiples of divisors of the discriminant of the curve.


For elliptic curves over quadratic fields with torsion $\Z/18\Z$, the rank is always even, as will be shown in Theorem~\ref{theorem:false-CM}.  For curves constructed from points of~$\X_1(18)$ over quadratic fields, therefore, instead of computing the root number to find candidates for curves with positive rank, we approximate the value of the $L$-function $L(E,s)$ at $s=1$; if this vanishes, then $E$ should have rank at least~2 by the Birch--Swinnerton-Dyer conjecture.  In this way we obtain the elliptic curve in Table~\ref{table:rank+quadratic}.  The first point of infinite order was found by T. Preu using the \textsf{PseudoMordellWeilGroup()} function in Magma after a change of coordinates to simplify the Weierstrass equation.  The second point of infinite order can be found using the action of $\Z[\sqrt{-2}]$ on~$E(K)$ described in Subsection~\ref{subsec:18}.
\end{proof}

We study the same problem over cubic and quartic fields, and find many new examples of Mordell--Weil groups of elliptic curves. We will not give examples for the torsion groups $T$ such that it is trivial to find an elliptic curve with torsion~$T$ and positive rank.  Let us explain what we mean by ``trivial'' for cubic and for quartic fields.

Over cubic fields, the trivial $T$ are those that already appear as torsion groups of elliptic curves over~$\Q$, as well as $\Z/ 14 \Z$ and $\Z/ 18 \Z$.
Namely, let $E$ be an elliptic curve over~$\Q$ with positive rank and torsion $\Z/7\Z$ or $\Z/9\Z$. Then over the cubic field generated by a root of the $2$-division polynomial, $E$ has torsion $\Z/14\Z$ or $\Z/18\Z$, respectively. In this way, one can construct elliptic curves with Mordell--Weil groups $\Z/ 14 \Z\oplus \Z^5$ and $\Z/ 18 \Z \oplus \Z^4$; see~\cite{du}.

Over quartic fields, the trivial $T$ are those that already occur over the rational numbers or a quadratic field, those of the form $\Z/ 4n \Z$, where $\Z/ 2n \Z$ occurs over~$\Q$, and those of the form $\Z/2\Z \oplus T'$, where $T'$ occurs over a quadratic field and has exactly one element of order~2.
Let $E$ be an elliptic curve over~$\Q$ with positive rank and torsion $\Z/ 2n \Z$, and let $R\in E(\Qbar)$ be a point such that $2R$ generates this torsion group. Then $E$ has torsion at least~$\Z/ 4n \Z$ over the field of definition of $R$. In this way, one can construct elliptic curves with Mordell--Weil group containing $\Z/ 16 \Z\oplus \Z^6$, $\Z/ 20 \Z\oplus \Z^4$, and $\Z/ 24 \Z\oplus \Z^4$.
Now let $E$ be an elliptic curve over a quadratic field~$K$ with positive rank and torsion $T'$, with $T'$ as above. Then $E$ has torsion at least $\Z/2\Z \oplus T'$ over the quartic field~$K(E[2])$. In this way, one can construct elliptic curves over quartic fields with Mordell--Weil groups containing $\Z/ 2 \Z\oplus \Z/ 14 \Z\oplus\Z$, $\Z/ 2 \Z\oplus \Z/ 16 \Z\oplus\Z$, $\Z/2\Z \oplus \Z/18\Z \oplus \Z^2$, and $\Z/ 6 \Z\oplus \Z/ 6 \Z\oplus\Z^6$. (See \cite{mjb} for an example of an elliptic curve with Mordell--Weil group containing $\Z/ 3 \Z\oplus \Z/ 6 \Z\oplus\Z^6$ over $\Q(\sqrt{-3})$.)

\begin{tm}
\label{t7}
There exist elliptic curves over cubic fields whose Mordell--Weil groups contain $\Z/11\Z \oplus \Z^2$, $\Z/13\Z \oplus \Z$, $\Z/15\Z \oplus \Z$,  $\Z/16\Z \oplus \Z$, $\Z/20\Z \oplus \Z$, $\Z/2\Z \oplus \Z/10\Z \oplus \Z$, $\Z/2\Z \oplus \Z/12\Z \oplus \Z$ and $\Z/2\Z \oplus \Z/14\Z \oplus \Z$.
\end{tm}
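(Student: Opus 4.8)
The plan is to construct explicit examples, following the same strategy used in the proofs of Theorems~\ref{t1}--\ref{t6}, and to present the resulting curves in a table. For each target torsion group $T$ from the list, I would begin by locating a suitable point on the modular curve $\X_1(T)$ over a cubic field, using known explicit affine models of $\X_1(N)$ (as in \cite{yan} for the cases with a single generator) or the corresponding parametrizations of the families with full level structure. From such a point one reconstructs the Weierstrass equation of an elliptic curve $E$ over a cubic field~$K$ whose torsion subgroup contains~$T$; one then verifies that the torsion is \emph{exactly}~$T$, which can be done by checking that no larger torsion group from the list~\eqref{eq2} is compatible with the reductions of $E$ at several primes of good reduction.

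The substance of the theorem lies in exhibiting \emph{positive} rank, so for each constructed curve I would next search for points of infinite order. First I would compute the conjectural parity of the rank (via the root number, as in the discussion preceding Theorem~\ref{t2}) and retain only those curves whose rank is conjecturally positive. For the curves of conjecturally odd rank this guarantees a point of infinite order exists under the Birch--Swinnerton-Dyer conjecture, but to make the result unconditional I would actually produce explicit generators: I would search among small multiples of points whose $x$-coordinates are built from divisors of the discriminant and the conductor of~$E$, the same heuristic that succeeded over quadratic fields in Theorem~\ref{t6}. Once candidate points are found, one checks directly that they are non-torsion by confirming that they have infinite order (e.g.\ that some small multiple does not return to a torsion point, or that the canonical height is nonzero).

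For the groups with more than one independent point claimed, namely $\Z/11\Z \oplus \Z^2$, the procedure is the same except that I must exhibit two independent points of infinite order; independence is verified by showing that the $2\times 2$ height-pairing matrix is nonsingular. I expect the \textbf{main obstacle} to be this rank-verification step rather than the construction of the curves. Producing curves with the prescribed torsion is essentially a matter of evaluating the parametrization at rational points of~$\X_1(T)$ over cubic fields, but \emph{guaranteeing} that the rank is positive requires either finding explicit points of infinite order (which may be of large height and hard to locate by naive search) or, failing that, invoking the Birch--Swinnerton-Dyer conjecture. Since the theorem as stated asserts that the Mordell--Weil groups \emph{contain} the listed groups, it suffices to find the points; I therefore expect the proof to consist of a table of explicit curves together with explicit generators of infinite order, whose properties can be verified by direct computation in Magma, Pari, or Sage, and the effort is concentrated entirely on the search for those generators.
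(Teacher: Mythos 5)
Your proposal matches the paper's proof: the authors construct curves with the prescribed torsion from the known parametrized families (citing \cite{jks} for $\Z/11\Z$ and \cite{jk3} for the rest), sieve for conjecturally odd --- hence positive --- rank via the root number, and then make the result unconditional by exhibiting explicit points of infinite order in a table (Table~\ref{table:rank+cubic}), exactly as you outline. Your additional remarks (that exact torsion need not be verified since the theorem only claims containment, and that the $\Z/11\Z\oplus\Z^2$ case needs two independent points) are consistent with what the paper does.
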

\begin{proof}
We give explicit examples of such curves in Table~\ref{table:rank+cubic}.  For torsion group $\Z/11\Z$, we search through the curves constructed in \cite{jks}, and quickly find a rank two curve among the smallest cases.
For the other torsion groups, we construct elliptic curves with given torsion using \cite{jk3}, sieve for elliptic curve with conjecturally odd (and thus positive) rank, and then find points on the curves obtained.
\end{proof}

\begin{tm}
\label{t8}
There exist elliptic curves over quartic fields whose Mordell--Weil groups contain $\Z/ 17 \Z\oplus\Z$, $\Z/21\Z \oplus \Z$, $\Z/3\Z \oplus \Z/9\Z \oplus \Z$, $\Z/4\Z \oplus \Z/8\Z \oplus \Z$, and $\Z/5\Z \oplus \Z/5\Z \oplus \Z$.
\end{tm}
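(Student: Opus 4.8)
The plan is to follow the computational strategy already used for Theorems~\ref{t6} and~\ref{t7}: exhibit explicit curves in a table and explain how they are produced. For each of the five torsion groups I would first write down a curve over a quartic field carrying that torsion, then certify that its rank is positive by displaying a point of infinite order. Since we only need a lower bound on the rank, it suffices to produce one non-torsion point; there is no need to bound the rank from above, which is what makes the quartic case tractable here.

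The starting point for each group is a parametrization of the curves with the prescribed torsion. For $\Z/17\Z$ and $\Z/21\Z$ I would search for points of small height on the modular curves $\X_1(17)$ and $\X_1(21)$ defined over quartic fields, and reconstruct the corresponding elliptic curve together with its torsion point from each such point, exactly as in the $\Z/15\Z$ computation of Theorem~\ref{t6}. For the groups with full level structure I would exploit the constraint imposed by the Weil pairing: if $E(K)$ contains $\Z/n\Z \oplus \Z/n\Z$ then $E[n] \subseteq E(K)$, so $K$ contains the $n$-th roots of unity. Thus any quartic field realizing $\Z/5\Z \oplus \Z/5\Z$ must equal $\Q(\zeta_5)$, any field realizing $\Z/3\Z \oplus \Z/9\Z$ must contain $\Q(\zeta_3)$, and any field realizing $\Z/4\Z \oplus \Z/8\Z$ must contain $\Q(i)$. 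These conditions sharply restrict the search and dictate which base fields to parametrize over; I would use the families of \cite{jkp} and \cite{jk4} to produce curves with each torsion subgroup over the appropriate field.

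Having a supply of candidate curves, I would sieve them for positive rank before attempting any point search. Following the recipe in the text, one computes the conjectural parity via the root number (the \textsf{RootNumber()} function) and retains the curves of odd rank, which under the Birch--Swinnerton-Dyer conjecture have positive rank; when the expected rank is even one instead approximates $L(E,1)$ and keeps the curves for which it vanishes. The reduction of the quartic rank computation to two quadratic twists over a quadratic subfield, or to four twists over~$\Q$, described just before Theorem~\ref{t3}, is what makes the parity and $L$-value computations feasible.

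The main obstacle is the final step: actually producing a point of infinite order over the quartic field and verifying that it is non-torsion. Direct point searches over quartic fields are expensive, so in practice I would search for points on the quadratic twists over a quadratic subfield, or use the \textsf{PseudoMordellWeilGroup()} function in Magma as in Theorem~\ref{t6}, after simplifying the Weierstrass model. Once a candidate point is found, confirming that it has infinite order is routine: one checks that it is not among the finitely many torsion points, for instance by verifying that its canonical height is nonzero. Assembling the resulting curves and points into Table~\ref{table:rank+quartic} completes the proof.
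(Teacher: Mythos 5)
Your proposal is correct and follows essentially the same route as the paper: the paper likewise takes curves with the prescribed torsion from the families of \cite{jk4}, sieves for conjecturally positive rank via root numbers, and then proves the theorem unconditionally by exhibiting explicit points of infinite order in Table~\ref{table:rank+quartic}. Your Weil-pairing observation about the base fields is a correct refinement (indeed the paper's $(5,5)$ curve lives over $\Q(\zeta_5)$, the $(3,9)$ curve over a field containing $\Q(\zeta_3)$, and the $(4,8)$ curve over $\Q(i,\sqrt{7})$), but it does not change the overall strategy.
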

\begin{proof}
Explicit examples of such curves are given in Table~\ref{table:rank+quartic}.
We obtain our curves from \cite{jk4} and use the same strategy as in Theorem~\ref{t7} to obtain points of infinite order on them.
\end{proof}

From the results in this section, we can draw the following conclusion.  Let $d\le4$, and let $T$ be a group that occurs infinitely often as the torsion group of an elliptic curve over a number field of degree~$d$.  Then there exists an elliptic curve with positive rank and torsion $T$ over a number field of degree~$d$, except possibly for $d=4$ and $T=\Z/22\Z$.

\section{False complex multiplication}

\label{sec:false-cm}

In this section we describe a phenomenon that, for an elliptic curve $E$ over a quadratic field $L$ having torsion $\Z/13\Z$, $\Z/16\Z$, or $\Z/18\Z$, gives $E(L)$ a module structure over the ring $\Z[t]/(t^2-a)$ for some $a\in\Z$.  We say that $E$ has \emph{false complex multiplication\/} by~$\Q[t]/(t^2-a)$; a precise definition of false complex multiplication will be given in Subsection~\ref{subsec:K-curves}.  A similar phenomenon occurs for elliptic curves with torsion $\Z/22\Z$ over quartic fields.

\begin{tm}
\label{theorem:false-CM}
Let $E$ be an elliptic curve over a number field~$L$.
\begin{enumerate}
\item If\/ $[L:\Q]=2$ and $E$ has a rational point of order\/~$13$, then $L$ is real and $E$ has false complex multiplication by~$\Q(\sqrt{-1})$.
\label{tm13}
\item If\/ $[L:\Q]=2$ and $E$ has a rational point of order\/~$16$, then $E$ has false complex multiplication by~$\Q\times\Q$.
\label{tm16}
\item If\/ $[L:\Q]=2$ and $E$ has a rational point of order\/~$18$, then $L$ is real and $E$ has false complex multiplication by~$\Q(\sqrt{-2})$.
\label{tm18}
\item If\/ $[L:\Q]=4$ and $E$ has a rational point of order\/~$22$, then $L$ has a quadratic subfield $K$ such that $E$ is a $K$-curve over $L$ with false complex multiplication by $\Q(\sqrt{-2})$.
\label{tm22}
\end{enumerate}
\end{tm}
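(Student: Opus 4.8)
The plan is to establish, for each of the four cases, the existence of an isogeny between $E$ and one of its Galois conjugates, and then to translate this isogeny into an endomorphism-like action on the Mordell--Weil group that squares to multiplication by an integer. The unifying mechanism is the modular curve $\X_1(N)$ for $N = 13, 16, 18$ (and the relevant curve in the quartic case). For each such $N$, the relevant modular curve has genus making it amenable to an explicit analysis, and the key structural input is the behavior of its points under the Atkin--Lehner or diamond operators, which relates the pair $(E, P)$ to a conjugate pair.

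**First I would** treat the quadratic cases (\ref{tm13}), (\ref{tm16}), (\ref{tm18}) as a template. Let $\sigma$ generate $\Gal(L/\Q)$. A point of order $N$ on $E$ over $L$ corresponds to an $L$-rational point on $\X_1(N)$; since $\X_1(N)$ has no noncuspidal rational points over $\Q$ for these $N$, the point and its conjugate are genuinely distinct, and I would show that $\lsup{\sigma}{E}$ is $N$-isogenous to $E$ over $L$ via a cyclic isogeny $\phi\colon E \to \lsup{\sigma}{E}$. Composing with the conjugate isogeny gives $\lsup{\sigma}{\phi}\circ\phi \in \End(E)$, and because $E$ has no CM (one checks the $j$-invariants are non-CM), this composite must be $\pm N$ or a product reflecting the Atkin--Lehner eigenvalue; the sign and the factorization of $N$ determine the quadratic algebra $\Q(\sqrt{-1})$, $\Q\times\Q$, or $\Q(\sqrt{-2})$ respectively. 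The field $L$ being real in cases (\ref{tm13}) and (\ref{tm18}) I would extract from the requirement that the relevant quadratic point lie in a real field, which can be read off from the explicit equation of $\X_1(N)$ and the ramification of the covering $\X_1(N) \to \X_0(N)$ or $\mathrm{j}$-line. The splitting $N = 13$ giving $\Q(\sqrt{-1})$ versus $N=18$ giving $\Q(\sqrt{-2})$ should emerge from computing $\lsup{\sigma}{\phi}\circ\phi$ precisely: the trace-zero condition forces $t^2 = -13$ up to the degree, which after accounting for the cyclic structure reduces to the stated imaginary quadratic field. This is where I would rely on the forthcoming precise definition of false complex multiplication promised for Subsection~\ref{subsec:K-curves}, so that the module structure over $\Z[t]/(t^2-a)$ is exactly the action of the composite isogeny on $E(L)$.

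**For the quartic case** (\ref{tm22}), I would first locate the quadratic subfield $K \subseteq L$. A point of order $22$ gives a point on $\X_1(22)$; since $22 = 2 \cdot 11$ and $\X_1(22)$ has no rational or quadratic noncuspidal points over small fields, I would argue that the field of definition of the pair $(E,P)$ forces a tower $\Q \subseteq K \subseteq L$ with $[K:\Q]=2$, and that $E$ together with its $K$-structure makes $E$ a $K$-curve over $L$ in the sense that $\lsup{\tau}{E}$ is isogenous to $E$ for $\tau \in \Gal(L/K)$. The false complex multiplication by $\Q(\sqrt{-2})$ should then follow by the same composite-isogeny computation as in the $N=18$ case, which is natural since $18$ and $22$ share the relevant $2$-part behavior governing the $\sqrt{-2}$; indeed the $11$-isogeny structure on $\X_0(11)$ combined with the $2$-isogeny should produce $t^2 = -2$ after normalization.

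**The hard part will be** pinning down the precise quadratic algebra in each case — distinguishing $\Q(\sqrt{-1})$ from $\Q(\sqrt{-2})$ from $\Q\times\Q$ — since this requires computing the exact composite $\lsup{\sigma}{\phi}\circ\phi$ as an element of $\End(E)$ rather than merely knowing its degree. The degree alone only tells us $t^2 = \pm N$ up to the isogeny structure; determining the sign (which separates the split case $\Q\times\Q$ from the imaginary quadratic cases) and the reduction of $\pm N$ to a squarefree discriminant demands a careful analysis of how the Atkin--Lehner involution acts on the specific quadratic points, together with verifying that $E$ is not CM so that $\End(E) = \Z$ and the composite genuinely lands in $\Z$. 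I would expect the reality of $L$ in cases (\ref{tm13}) and (\ref{tm18}) and the identification of $K$ in case (\ref{tm22}) to fall out of the same explicit model computation, but making the module structure canonical — independent of the choice of isogeny $\phi$ versus its dual — will require the careful framework of $\Q$-curves and $K$-curves that the section sets up.
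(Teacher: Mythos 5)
Your proposal has two genuine gaps, and the first one is the entire content of the theorem. You assert that for a quadratic point $(E,P)$ on $\Y_1(N)$ the conjugate curve $\lsup{\sigma}{E}$ is isogenous to $E$ over~$L$, but you give no argument, and this does not follow from the observation that $(E,P)$ and its Galois conjugate are distinct non-cuspidal points: two distinct points on a modular curve need not be related by any modular involution. The heart of the paper's proof is exactly this step: one must determine \emph{all} quadratic points on $\Y_1(N)$ and show that each is the pullback of a $\Q$-rational point of the quotient $\Y_1(N)/\iota$. For $N=13,16,18$ this is done by proving that $\J_1(N)(\Q)$ is finite and consists entirely of classes of divisors supported on the cusps (a 2-descent bounds the Selmer group, then injectivity of reduction on torsion plus a zeta-function point count over $\F_2$, $\F_3$, or $\F_5$ pins down the group), and then invoking genus-2 geometry: a non-cuspidal effective divisor of degree 2 whose class is cuspidal must lie in the canonical class, i.e.\ be a fiber of the hyperelliptic map, and one checks that the hyperelliptic involution coincides with the modular involution $\iota$. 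For $N=22$ (genus 6) the analogue requires the Castelnuovo--Severi inequality to show every degree-4 map $\X_1(22)\to\PP^1$ factors through the bi-elliptic quotient, the vanishing of the rank of $\J_1(22)(\Q)$ (which rests on Kato's proven instances of the Birch--Swinnerton-Dyer conjecture for the level-22 newform), and a count of points of $\Sym^4\X_1(22)$ over~$\F_3$. No substitute for this machinery appears in your plan. Your derivation of the reality of $L$ in cases (1) and (3) inherits the same hole: the positivity argument $v^2=f(u)>0$ only applies once you know the $u$-coordinate of the quadratic point is $\Q$-rational, which is precisely the missing structure theorem.

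Second, your mechanism for identifying the quadratic algebra is incorrect. The isogeny relating $E$ to its conjugate is \emph{not} an $N$-isogeny, and the composite $\lsup{\sigma}{\phi}\circ\phi$ is not $\pm N$. For $N=13$ and $16$ the relevant involution is a diamond operator ($\diam{5}$, resp.\ $\diam{7}$), which does not change the underlying elliptic curve at all, so $E$ and $\lsup{\sigma}{E}$ are \emph{isomorphic}; for $N=18$ and $22$ the involution involves dividing by a point of order 2, so the curves are 2-isogenous. The composite endomorphism is computed on the universal family from the moduli interpretation and equals $-1$, $+1$, $-2$, $-2$ in the four cases, via the congruences $5^2\equiv-1\ (\mathrm{mod}\ 13)$, $7^2\equiv1\ (\mathrm{mod}\ 16)$, $4^2\equiv-2\ (\mathrm{mod}\ 9)$, and $8^2\equiv-2\ (\mathrm{mod}\ 11)$. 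Your ``$t^2=-13$ up to the degree'' would yield false complex multiplication by $\Q(\sqrt{-13})$, and no accounting for cyclic structure turns $\Q(\sqrt{-13})$ into the field $\Q(\sqrt{-1})$ of the statement; likewise the distinction between $\Q\times\Q$ (case 2) and the imaginary quadratic cases is not a ``sign of $\pm N$'' but the sign of $\pm1$ coming from $7^2\equiv1$ versus $5^2\equiv-1$.
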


\begin{corollary}
Any elliptic curve over a quadratic number field with a point of order~$13$ or order~$18$, as well as any elliptic curve over a quartic number field with a point of order~$22$, has even rank.
\end{corollary}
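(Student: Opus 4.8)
The plan is to deduce even rank formally from the module structure supplied by Theorem~\ref{theorem:false-CM}. In each of the three cases appearing in the corollary, that theorem endows $E(L)$ with the structure of a module over $R := \Z[t]/(t^2-a)$, where $a=-1$ when $E$ has a point of order~$13$ and $a=-2$ when $E$ has a point of order~$18$ over a quadratic field or a point of order~$22$ over a quartic field. First I would record that $R$ is then the full ring of integers of the imaginary quadratic field $\Q(\sqrt a)$ --- namely $\Z[i]$ for $a=-1$ and $\Z[\sqrt{-2}]$ for $a=-2$ --- and that both $\Q(i)$ and $\Q(\sqrt{-2})$ have class number one, so that $R$ is a principal ideal domain.

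Next I would pass to the free part. Because $t$ acts as an endomorphism of the abelian group $E(L)$, it sends a point of order $n$ to one of order dividing $n$, so $t\bigl(E(L)_\tors\bigr)\subseteq E(L)_\tors$ and the $R$-module structure descends to the quotient $M := E(L)/E(L)_\tors \cong \Z^r$. I would then check that $M$ is torsion-free as an $R$-module: if $rm=0$ for some nonzero $r\in R$ and some $m\in M$, then multiplying by the conjugate $\bar r$ gives $N(r)\,m = \bar r\,(rm) = 0$, where $N(r)=r\bar r$ is a nonzero rational integer, and since $M$ is torsion-free over $\Z$ this forces $m=0$.

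Finally I would invoke the structure theorem for finitely generated modules over a principal ideal domain: $M$ is a finitely generated torsion-free $R$-module, hence free, say $M \cong R^s$. Since $R$ is free of rank~$2$ over $\Z$, the rank of $E(L)$ equals $r = 2s$, which is even.

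No step here is genuinely hard once Theorem~\ref{theorem:false-CM} is available; the only real content is the input that $t^2$ is a \emph{negative} integer whose associated ring of integers has class number one, and this is precisely what distinguishes the three cases of the corollary from the order-$16$ case. For a point of order~$16$ one obtains only false complex multiplication by $\Q\times\Q$, that is, $a=+1$ and $R=\Z[t]/(t^2-1)\cong\Z\times\Z$, which is not a domain; the argument above then collapses, and even rank indeed need not hold.
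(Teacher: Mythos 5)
Your proof is correct, but it takes a more laborious integral route than the paper, whose entire argument is the second remark in Subsection~\ref{subsec:K-curves}: false complex multiplication by a \emph{field} $F$ (here $\Q(\sqrt{-1})$ or $\Q(\sqrt{-2})$) makes the $\Q$-vector space $\Q\otimes_\Z E(L)\simeq\Q\otimes_\Z(\Res_{L/K}E)(K)$ into an $F$-vector space, so the rank, being the $\Q$-dimension, is twice the $F$-dimension and hence even. No passage to $E(L)/E(L)_\tors$, no torsion-freeness check, and no structure theorem over a PID is needed. Your version does buy something extra --- it shows the free part of $E(L)$ is actually a \emph{free} module over $\Z[i]$ or $\Z[\sqrt{-2}]$ --- but note that the integral $\Z[t]/(t^2-a)$-module structure on $E(L)$ is not literally what Theorem~\ref{theorem:false-CM} asserts (its conclusion is false CM in the sense of the paper's definition, i.e.\ an embedding of $F$ into $\Q\otimes\End_K(\Res_{L/K}E)$); the integral structure comes from the construction of Subsection~\ref{subsec:K-curve-constr}, which the proofs of the theorem show applies to every such curve, so you are quietly using a bit more than the theorem's statement. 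More importantly, your closing diagnosis is mistaken: class number one is \emph{not} the essential input, and it is not what distinguishes these cases from the order-$16$ case. The only thing that matters is that $a$ is not a square, i.e.\ that $\Q[t]/(t^2-a)$ is a field rather than $\Q\times\Q$. If the ring had class number greater than one (say $a=-5$), the rank would still be even: tensoring with $\Q$ gives a vector space over $\Q(\sqrt{a})$, or, if you insist on working integrally, a finitely generated torsion-free module over a Dedekind domain is a direct sum of ideals, each of $\Z$-rank~$2$. The PID hypothesis only upgrades "projective" to "free", which the corollary never asks for.
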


\begin{tm}
\label{theorem:13-16}
Let $E$ be an elliptic curve defined over a quadratic field $L$ with a point of order $n=13$ or $16$, and let $\sigma$ be the generator of~$\Gal(L/\Q)$. Then
\begin{enumerate}
\item $E$ is $L$-isomorphic to~$E^\sigma$.

\label{13-16-part1}
\item $E$ has a quadratic twist (by an element $d$ of $O_L$) $E^{(d)}$ that is defined over~$\Q$.  For any such $d$, the curve $E^{(d)}$ has an $n$-isogeny defined over~$\Q$.
\end{enumerate}
\end{tm}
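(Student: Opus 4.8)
The plan is to read off both statements from the geometry of the modular curve $\X_1(n)$ for $n\in\{13,16\}$. The crucial input is that each of these curves is hyperelliptic of genus~$2$ over $\Q$, and that its hyperelliptic involution $w$ coincides with a diamond operator $\langle d_0\rangle$, where $d_0=5$ for $n=13$ (note $5^2\equiv-1\pmod{13}$) and $d_0=7$ for $n=16$ (note $7^2\equiv1\pmod{16}$). To see this I would note that the diamond operators form a cyclic group in which $\langle d_0\rangle$ is the unique element of order~$2$, and that the quotient $\X_1(n)/\langle d_0\rangle$ has genus~$0$; since on a genus-$2$ curve the hyperelliptic involution is the unique involution with rational quotient, the two agree. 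On the moduli level $\langle d_0\rangle$ sends $(E,P)\mapsto(E,d_0P)$, so it fixes the underlying curve --- this is precisely what will yield an \emph{isomorphism}, rather than merely an isogeny, between $E$ and $E^\sigma$.

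For part~(\ref{13-16-part1}) I would proceed as follows. A point of order $13$ or $16$ occurs on no elliptic curve over $\Q$, so $\X_1(n)(\Q)$ consists only of cusps; hence $(E,P)$ defines a genuinely quadratic, non-cuspidal point $x\in\X_1(n)(L)$ with $x\neq x^\sigma$ and $x^\sigma=(E^\sigma,P^\sigma)$. The key step is to show that $x^\sigma=w(x)=\langle d_0\rangle x$, equivalently that $x+x^\sigma$ is linearly equivalent to the hyperelliptic (canonical) class $K$. Here I would use that $J_1(n)(\Q)$ is finite: the class of $x+x^\sigma$ minus a fixed rational degree-$2$ divisor lies in this finite group, and by geometric Riemann--Roch an effective divisor of degree~$2$ on a genus-$2$ curve moves in a pencil precisely when it is equivalent to $K$. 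Determining the finitely many degree-$2$ classes explicitly on the given models, one checks that their only non-cuspidal effective representatives are the fibres of the hyperelliptic map, so $x+x^\sigma\sim K$ and thus $x^\sigma=\langle d_0\rangle x$. Reading this as $(E^\sigma,P^\sigma)\cong(E,d_0P)$ gives in particular an $L$-isomorphism $E\isom E^\sigma$.

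For part~(2) I would fix such an isomorphism $\phi\colon E\to E^\sigma$. Since the relevant curves have $j$-invariant different from $0$ and $1728$ (one checks a CM curve carries no point of order $13$ or $16$ over a quadratic field), $\Aut E=\{\pm1\}$ and hence $\phi^\sigma\circ\phi=\pm1$. According as the sign is $+1$ or $-1$, either $\phi$ is already an effective descent datum (take $d=1$) or twisting by the quadratic character of $L/\Q$ corrects the cocycle, producing $d\in O_L$ with $L=\Q(\sqrt d)$ such that $E^{(d)}$ descends to $\Q$; either way $E$ has a quadratic twist defined over $\Q$. Finally, the group $\langle P\rangle$ is $\Gal(\Qbar/L)$-stable (being pointwise rational) and, by $P^\sigma=d_0P$, stable under a lift of $\sigma$, hence $\Gal(\Qbar/\Q)$-stable. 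As quadratic twisting acts on $E[n]$ by a scalar, this stability is preserved, so on any $E^{(d)}$ defined over $\Q$ the group $\langle P\rangle$ descends to a $\Q$-rational cyclic subgroup of order $n$, i.e.\ a $\Q$-rational $n$-isogeny.

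The main obstacle is the middle step of part~(\ref{13-16-part1}): excluding \emph{sporadic} quadratic points, that is, showing no non-cuspidal degree-$2$ class other than $|K|$ has an effective representative. Finiteness of $J_1(n)(\Q)$ bounds these classes but does not by itself rule out isolated points, so this step requires an explicit determination of the degree-$2$ classes on the chosen models of $\X_1(13)$ and $\X_1(16)$ together with a check that their effective representatives are cuspidal or lie in $|K|$. Identifying the hyperelliptic involution with $\langle d_0\rangle$ is a second, more routine point that must still be verified on the explicit model, since it is exactly what upgrades the conclusion from an isogeny to an isomorphism and, via $d_0^2\equiv\pm1$, is the source of the false complex multiplication of Theorem~\ref{theorem:false-CM}.
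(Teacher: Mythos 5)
Your part~(\ref{13-16-part1}) is, in substance, the paper's own proof: the paper likewise reduces everything to showing that a non-cuspidal quadratic point $x$ of $\X_1(n)$ satisfies $x+x^\sigma\sim K$, and it fills in exactly the step you flag as the main obstacle, by a $2$-descent on $\J_1(n)$ combined with injectivity of reduction on torsion (mod $2$ for $n=13$, mod $3$ for $n=16$) and a zeta-function count, giving $\J_1(13)(\Q)\simeq\Z/19\Z$ and $\J_1(16)(\Q)\simeq\Z/2\Z\oplus\Z/10\Z$ with every class represented by a cuspidal effective degree-$2$ divisor; the genus-$2$ lemma that two distinct effective degree-$2$ divisors are linearly equivalent only if both are canonical then forces $x+x^\sigma\sim K$. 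Your identification of the hyperelliptic involution with $\diam{5}$ (resp.\ $\diam{7}$) via uniqueness of the order-$2$ diamond and the genus-$0$ quotient is a sound variant of the paper's direct computation with the moduli interpretation, provided you do verify that the quotient has genus $0$. Your argument for the $n$-isogeny on $E^{(d)}$ (the cyclic subgroup is preserved because twisting acts through $\{\pm1\}$ and $\Aut E=\{\pm1\}$ preserves subgroups) is also an acceptable rephrasing of the paper's coarse-moduli argument on $\Y_0(n)$.

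The genuine gap is in the first half of part~(2). The dichotomy you propose --- either $\phi^\sigma\circ\phi=+1$ and $d=1$ works, or $\phi^\sigma\circ\phi=-1$ and ``twisting by the quadratic character of $L/\Q$'' produces $d$ with $L=\Q(\sqrt d)$ --- fails in the second case. Any such $d$ is a rational number that is a square in $L$, so $E^{(d)}\cong E$ over $L$: this twist literally does nothing, and more generally twisting by \emph{any} rational number cannot change the descent obstruction. Indeed, writing $E\cong E_0^{(u)}$ over $L$ with $E_0/\Q$ of the same $j$-invariant (possible since $j\in\Q$, $j\neq0,1728$), the curve $E^{(d)}$ is defined over $\Q$ if and only if $ud\in\Q^\times(L^\times)^2$; so when $u\notin\Q^\times(L^\times)^2$, every admissible $d$ is irrational. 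This case really occurs for curves isomorphic to their conjugates: take $L=\Q(\sqrt2)$ and $u=2-\sqrt2$; then $\sigma(u)/u=(1+\sqrt2)^2$ is a square in $L$, so $E_0^{(u)}\cong(E_0^{(u)})^\sigma$ over~$L$, yet $u\notin\Q^\times(L^\times)^2$ (its norm $2$ is not a rational square), so neither $d=1$ nor $d=2$ nor $d=\sqrt2$ yields a curve defined over $\Q$ --- only $d\equiv 2-\sqrt2$ modulo $\Q^\times(L^\times)^2$ does. The repair is the simpler route the paper takes, and your part~(\ref{13-16-part1}) already provides it: $E\cong E^\sigma$ forces $j(E)\in\Q$; since $j\neq0,1728$ (your CM observation, which the paper leaves implicit), $E\cong E_0^{(u)}$ over $L$ for some $E_0/\Q$ and $u\in L^\times$, which may be scaled by a square into $O_L$; then $E^{(u)}\cong E_0$ is defined over $\Q$. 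With that substitution part~(2) goes through, and the isogeny statement follows as you (and the paper) argue.
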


\subsection{Preliminaries}

\label{subsec:K-curves}

Let $L/K$ be a finite Galois extension of number fields, and let $E$ be an elliptic curve over~$L$.  Let $\Res_{L/K}$ denote the Weil restriction functor.  We let $B$ denote the Abelian variety
$$
B=\Res_{L/K}E
$$
of dimension~$[L:K]$ over~$K$.  It is known that the base change $B_L$ of $B$ to~$L$ is given by
$$
B_L\simeq\prod_{\sigma\in\Gal(L/K)}\lsup\sigma E.
$$
From this we get an isomorphism
$$
\End_L B_L\simeq\bigoplus_{\sigma,\tau\in\Gal(L/K)}
\Hom_L(\lsup\sigma E,\lsup\tau E).
$$
where the multiplication on the left-hand side corresponds to ``matrix multiplication'' on the right-hand side.  Taking Galois invariants, we get an isomorphism
$$
\End_K B\simeq\bigoplus_{\sigma\in\Gal(L/K)}\Hom_L(\lsup\sigma E,E),
$$
where the multiplication on the right-hand side is the bilinear extension of the maps
\begin{align*}
\Hom(\lsup\sigma E,E)\times\Hom(\lsup\tau E,E)&\longrightarrow
\Hom(\lsup{\sigma\tau} E,E)\\
(\mu,\nu)&\longmapsto\mu\circ\lsup\sigma\nu.
\end{align*}

\begin{definition}
Let $L/K$ be a finite Galois extension of number fields.  A \emph{$K$-curve over~$L$} is an elliptic curve $E$ over~$L$ that is isogenous to its Galois conjugates $\lsup\sigma E$ for all $\sigma\in\Gal(L/K)$.
\end{definition}

Let $E$ be a $K$-curve over~$L$, and write $R_E=\Q\otimes_\Z\End_LE$; this is either $\Q$ or an imaginary quadratic field.  Then $\Hom_L(\lsup\sigma E,E)$ is a one-dimensional $R_E$-vector space for all $\sigma\in\Gal(L/K)$, and hence $\Q\otimes_\Z\End_K(\Res_{L/K}E)$ is an $R_E$-vector space of dimension~$[L:K]$.

In the sequel, we will only be interested in the case where $L$ is a quadratic extension of~$K$.  For this we introduce the following terminology.

\begin{definition}
Let $L$ be a number field, let $E$ be an elliptic curve over~$L$, and let $F$ be an \'etale $\Q$-algebra of degree~2.  We say that $E$ has \emph{false complex multiplication} by~$F$ if there exists a subfield $K\subset L$ with $[L:K]=2$ such that $\Q\otimes\End_K(\Res_{L/K}E)$ contains a $\Q$-algebra isomorphic to~$F$.
\end{definition}
\begin{remarks}
Let $E$, $L$, $F$, and $K$ be as in the above definition.
\begin{enumerate}
\item An elliptic curve $E$ over a number field $L$ has false complex multiplication if and only if $E$ has complex multiplication or there is a subfield $K\subset L$ with $[L:K]=2$ such that $E$ is a $K$-curve.
\item Note that  $F$ is either $\Q\times\Q$ or a quadratic field.  If $E$ is an elliptic curve over~$L$ with false complex multiplication by~$F$, then the $\Q$-vector space
$$
\Q\otimes_\Z E(L)\simeq\Q\otimes_\Z(\Res_{L/K}E)(K)
$$
has a natural $F$-module structure.  If $F$ is a field, this implies that the finitely generated Abelian group $E(L)$ has even rank.
\end{enumerate}
\end{remarks}

\subsection{Families of curves with false complex multiplication constructed from involutions on modular curves}

\label{subsec:K-curve-constr}

Let $n$ be a positive integer, and let $\Y_0(n)$ be the (coarse) modular curve over~$\Q$ classifying elliptic curves with a cyclic subgroup of order~$n$, i.e.\ a subgroup scheme that is locally isomorphic to $\Z/n\Z$ in the \'etale topology.  Let $Y$ be a smooth affine curve over~$\Q$, let $E$ be an elliptic curve over~$Y$, and let $G$ be a cyclic subgroup of order~$n$ in~$E$.  Then we obtain a morphism
$$
Y\to\Y_0(n).
$$
We assume that this morphism is finite.  Furthermore, we suppose given an involution
$$
\iota\colon Y\isom Y
$$
that lifts the automorphism $w_n$ of~$\Y_0(n)$.

Pulling back $E$ via~$\iota$ gives a second elliptic curve $\iota^*E$ over~$Y$.  By the assumption that $\iota$ lifts~$w_n$, we have
$$
\iota^*E\simeq E/G.
$$
Symmetrically, $\iota^*E$ is equipped with the cyclic subgroup $\iota^*G$ of order~$n$, which corresponds to the subgroup $E[n]/G$ of~$E/G$; we have
$$
(\iota^*E)/(\iota^*G)\simeq E.
$$
In view of this, we may fix an isogeny
$$
\mu\colon \iota^*E\to E
$$
with kernel~$\iota^*G$.  Since the morphism $Y\to\Y_0(n)$ is finite and therefore dominant, we have $\Aut_Y E=\{\pm1\}$, and so $\mu$ is unique up to sign.  Pulling back $\mu$ via~$\iota$ and using the canonical isomorphism $\iota^*\iota^*E\simeq E$, we get a second isogeny
$$
\iota^*\mu\colon E\to\iota^*E
$$
with kernel~$G$.  Composing these, we get an endomorphism
\begin{equation}
\label{eqa}
a=\mu\circ\iota^*\mu\in\End_Y E.
\end{equation}
Note that this endomorphism does not depend on the choice of~$\mu$.  Its kernel is $E[n]$, so again using $\Aut_Y E=\{\pm1\}$, we conclude that
$$
a=\pm n.
$$
In particular, up to sign, $\iota^*\mu$ is the dual isogeny of~$\mu$.

Let $U$ be the complement of the scheme of fixed points of the involution~$\iota$ on~$Y$, let $U/\iota$ denote the quotient, and let $\Res_{U/(U/\iota)}$ denote the Weil restriction functor from $U$-schemes to $(U/\iota)$-schemes \cite[Section~7.6]{BLR}.  We write
$$
B=\Res_{U/(U/\iota)}E.
$$
Because the quotient map $U\to U/\iota$ is \'etale of degree~2, this is an Abelian scheme of relative dimension~2 over~$U/\iota$.  As in Subsection~\ref{subsec:K-curves}, we have
$$
\End_{U/\iota}B\simeq\End_U L\oplus\Hom_U(\iota^* E,E).
$$
We get an injective homomorphism
$$
\Z[t]/(t^2-a)\rightarrowtail\End_{U/\iota}B
$$
mapping $t$ to the endomorphism of~$B$ corresponding to $\mu\in\Hom_U(\iota^*E,E)$ under the above isomorphism.

Now let $K$ be a number field.  Specializing to arbitrary $K$-points of~$U/\iota$ gives a construction of $K$-curves, as follows.  Let $u\in(U/\iota)(K)$.  Suppose that the inverse image $v$ of~$u$ in~$U$ is irreducible, so it is of the form $\Spec L$ with $L$ a quadratic extension of~$L$.  Then the fiber $E_v$ is a $K$-curve over~$L$ and has false complex multiplication by $\Q[t]/(t^2-a)$.  In particular, if $a$ is not a square, then $E_v(L)$ has even rank.

\subsection{The modular curves $\Y_1(n)$}
Let $n\ge6$ be an integer.  The affine modular curve $\Y_1(n)$ classifying elliptic curves with a point of order~$n$ can be described as
$$
\Y_1(n)\simeq\Spec\bigl(\Z[s,t,1/n,1/\Delta]/(\phi_n)\bigr),
$$
where
$$
\Delta=-s^4t^3(t-1)^5\bigl(s(s+4)^2t^2 - s(2s^2+5s+20)t + (s-1)^3\bigr)
$$
and where
$$
\phi_n\in\Z[s,t]
$$
is an irreducible polynomial depending on~$n$.  The universal elliptic curve over $\Y_1(n)$ is given by the Weierstrass equation
$$
E\colon y^2+(1+(t-1)s)xy+t(t-1)sy=x^3+t(t-1)sx^2
$$
with the distinguished point~$(0,0)$, and $\phi_n$ is such that its vanishing is equivalent to the condition that $(0,0)$ is of order~$n$.

Since the polynomial $\phi_n$ is rather complicated for all but the smallest values of~$n$, it pays to introduce a change of variables giving a simpler-looking equation for~$\Y_1(n)$.  The new variables are called $u$ and~$v$ in the examples below.

\subsection{Torsion subgroup $\Z/13\Z$}
\label{subsec:13}

We consider the modular curve $\Y_1(13)$ over~$\Q$.  Its compactification $\X_1(13)$ has genus~2 and in particular is hyperelliptic.  There are six  rational cusps and six cusps with field of definition $\Q(\zeta_{13}+\zeta_{13}^{-1})$.

Let $(E,P)$ denote the universal pair of an elliptic curve and a point of order~13 over~$\Y_1(13)$.  The diamond automorphism
$$
\iota=\diam{5}=\diam{-5}
$$
of~$\Y_1(13)$ is an involution since $5^2$ is the identity element of $(\Z/13\Z)^\times/\{\pm1\}$.  It is a lift of the ``Atkin--Lehner involution $w_1$'' on~$\X(1)$ (i.e.\ the identity).  We note that the fixed points of~$\iota$ lie outside the cusps.

Pulling back $(E,P)$ via~$\iota$ yields a second pair $\iota^*(E,P)$, and the definition of $\iota$ gives an isomorphism
$$
\mu\colon\iota^*(E,P)\isom(E,5P)
$$
over~$\Y_1(13)$.  Pulling back $\mu$ via~$\iota$ gives another isomorphism
$$
\iota^*\mu\colon(E,P)\isom\iota^*(E,5P).
$$
We have
$$
\mu\circ\iota^*\mu\colon(E,P)\isom(E,5^2P)=(E,-P).
$$
This implies that, in the notation introduced above, we have
$$
a=-1,
$$
so the Abelian variety $B$ has the property that
$$
\End B\simeq\Z[\sqrt{-1}].
$$

In coordinates, the situation looks as follows.  We have
$$
\phi_{13} = t^3 - (s^4 + 5s^3 + 9s^2 + 4s + 2)t^2 + (s^3 + 6s^2 + 3s + 1)t + s^3.
$$
We use the change of variables
$$
\displaylines{
u=1/(s + 1) + 1/(t - 1),\quad v=u^4(t - 1) + u^2;\cr
s=\frac{v+u}{u(u+1)^2}-1,\quad t=\frac{v-u^2}{u^4}+1.}
$$
The modular curve $\Y_1(13)$ is isomorphic to the affine curve given by
\begin{align*}
v^2 - (u^3 + 2u^2 + u + 1)v + u^2(u + 1)&=0,\\
u(u+1)(u^3-u^2-4u-1)&\ne0.
\end{align*}
The six rational cusps are given by $u=0$, $u=-1$, and $u=\infty$; the six cusps defined over $\Q(\zeta_{13}+\zeta_{13}^{-1})$ are given by $u^3-u^2-4u-1=0$.

The hyperelliptic involution sends $(u,v)$ to $(u,u^3+2u^2+u+1-v)$.  A computation using the moduli interpretation shows that the hyperelliptic involution coincides with $\diam{5}=\diam{-5}$.

The specialization construction explained in Subsection~\ref{subsec:K-curve-constr} gives a family of $\Q$-curves.  Let $c\in\Q\setminus\{0,-1\}$.  The inverse image of the point defined by~$c$ under the map
$$
u\colon \Y_1(13)\to\Spec\Q[u,1/(u(u+1)(u^3-u^2-4u-1))]
$$
is the spectrum of the quadratic $\Q$-algebra
$$
L=\Q[v]/(v^2 - (c^3 + 2c^2 + c + 1)v + c^2(c + 1)).
$$
Since $\Y_1(13)$ does not have any $\Q$-rational points, $L$ is a field, and we obtain an elliptic curve over~$L$ with false complex multiplication by~$\Q(\sqrt{-1})$.

In fact, \emph{any} elliptic curve over a quadratic field with a point of order 13 comes from the above construction, as we will now show.

\begin{lemma}
\label{lemma:jac-g2}
Let $X$ be a proper, smooth, geometrically connected curve of genus~$2$ over a field~$k$, let $\iota$ be the hyperelliptic involution on~$X$, let $K$ be a canonical divisor on~$X$, and let $J$ be the Jacobian of~$X$.
\begin{enumerate}
\item An effective divisor of degree~$2$ on~$X$ is in the canonical linear equivalence class if and only it is the pull-back of a $k$-rational point of~$X/\iota$.
\item Let $S$ be a finite set of closed points of~$X$ such that every $k$-point  of~$J$ is of the form $[D'-K]$, where $D'$ is an effective divisor of degree~$2$ with support in~$S$.  Let $D$ be an effective divisor of degree~$2$ on~$X$.  Then either $D$ has support in~$S$, or $D$ lies in the canonical linear equivalence class.
\end{enumerate}
\end{lemma}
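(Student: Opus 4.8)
\emph{Part~(1).} The plan is to exploit that a genus-$2$ curve is canonically a double cover of the line. Since $\dim_k H^0(X,\omega_X)=g=2$, the canonical system $|K|$ is a base-point-free $g^1_2$ whose associated morphism is precisely the quotient map $\pi\colon X\to X/\iota\cong\PP^1_k$; thus $K=\pi^*H$, where $H$ is the degree-one class of a point on $\PP^1_k=X/\iota$. The easy direction is immediate: for $u\in(X/\iota)(k)$ the fibre $\pi^*(u)$ is an effective degree-$2$ divisor lying in $|K|$, hence linearly equivalent to $K$. For the converse, take $D$ effective of degree~$2$ with $D\sim K$. I would compute $H^0(X,\mathcal{O}_X(D))\cong H^0(X,\pi^*\mathcal{O}_{\PP^1}(1))\cong H^0(\PP^1,\mathcal{O}(1)\otimes\pi_*\mathcal{O}_X)$ and use $\pi_*\mathcal{O}_X\cong\mathcal{O}\oplus\mathcal{O}(-3)$ to see that every global section of $\mathcal{O}_X(D)$ is pulled back from $\mathcal{O}_{\PP^1}(1)$. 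Hence the section cutting out $D$ equals $\pi^*s'$ for some $s'\in H^0(\mathcal{O}(1))$, so $D=\pi^*(\operatorname{div}s')$ is the pull-back of a point of $X/\iota$, which is $k$-rational because $D$ is.

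\emph{Part~(2).} Here the tool is Riemann--Roch in genus~$2$ together with the uniqueness of the effective divisor in a non-special degree-$2$ class. Given $D$, the class $[D-K]$ is a $k$-point of $J$, so by hypothesis $[D-K]=[D'-K]$ for some effective degree-$2$ divisor $D'$ with support in~$S$; comparing the associated line bundles, this equality gives $D\sim D'$ over $\bar{k}$, which is all the geometric argument needs. I would then split into two cases. If $D\not\sim K$, then $K-D$ is a non-trivial class of degree~$0$, so $\ell(K-D)=0$ and Riemann--Roch gives $\ell(D)=\deg D-2+1+\ell(K-D)=1$; thus $|D|$ contains the single effective divisor $D$, and $D\sim D'$ forces $D=D'$, whence $\operatorname{supp}D=\operatorname{supp}D'\subseteq S$. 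If instead $D\sim K$, then $D$ lies in the canonical class, the second alternative.

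\emph{The main difficulty.} The geometric content is light --- the structure of the genus-$2$ canonical system and one application of Riemann--Roch --- so the points demanding care are the passages between $k$- and $\bar{k}$-rationality. In Part~(1) I must identify $k$-rational effective canonical divisors with genuine $k$-points of $X/\iota$ rather than $\bar{k}$-points; routing the converse through the isomorphism $H^0(\mathcal{O}_X(D))\cong H^0(\PP^1,\mathcal{O}(1)\otimes\pi_*\mathcal{O}_X)$ and the splitting of $\pi_*\mathcal{O}_X$ makes this descent transparent. In Part~(2) the only subtlety is that $[D-K]$ a priori lies in $\operatorname{Pic}^0(X)(k)$, which injects into $J(k)$; since the hypothesis quantifies over all $k$-points of $J$, this class is covered, and working over $\bar{k}$ when passing from equality of classes to the linear equivalence $D\sim D'$ sidesteps any Brauer-group obstruction. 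No hard estimate intervenes; the whole task is organizing these rationality comparisons correctly.
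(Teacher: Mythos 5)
Your proposal is correct and takes essentially the same route as the paper: for part~(2) the paper's argument is precisely yours --- the hypothesis gives $D\sim D'$ with $D'$ supported in~$S$, and if $D$ is not canonical then $|D|$ has dimension~$0$ (your Riemann--Roch computation, left implicit in the paper), forcing $D=D'$. Part~(1) the paper simply records as well known; your argument via the canonical map $X\to X/\iota$ and the splitting of $\pi_*\mathcal{O}_X$ is a correct proof of that standard fact, including the rationality descent.
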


\begin{proof}
The first part is well known.  For the second part, let $D$ be an effective divisor of degree~2.  By assumption, $D$ is linearly equivalent to an effective divisor~$D'$ with support in~$S$.  If $D$ is not in the canonical linear equivalence class, then the complete linear system $|D|$ has dimension~0, so~$D=D'$.
\end{proof}

\begin{lemma}
\label{lemma:description-13}
Let $\J_1(13)$ denote the Jacobian of\/~$\X_1(13)$, and let $K$ be a canonical divisor on~$\X_1(13)$.  The group $\J_1(13)(\Q)$ is isomorphic to $\Z/19\Z$, and
every element of\/~$\J_1(13)(\Q)$ is of the form $[D-K]$ with $D$ an effective divisor of degree~$2$ supported on the cusps of\/~$\X_1(13)$.
\end{lemma}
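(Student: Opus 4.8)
The plan is to establish the two claims separately: first determine $\J_1(13)(\Q)$ as an abstract group, and then prove the statement on effective representatives by a counting argument using only the rational cusps, the explicit model, and Riemann--Roch. The two parts are in fact intertwined, since the lower bound on $\#\J_1(13)(\Q)$ will fall out of the counting.

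To bound $\J_1(13)(\Q)$ from above, I would use that the reduction map $\J_1(13)(\Q)_\tors\hookrightarrow\J_1(13)(\F_p)$ is injective for every prime $p\neq13$ of good reduction; computing $\#\J_1(13)(\F_p)$ (via the characteristic polynomial of Frobenius on this abelian surface) for two or three small primes and taking the greatest common divisor should yield $\#\J_1(13)(\Q)_\tors\mid19$. To control the rank, I would invoke modularity: $\J_1(13)$ is isogenous over~$\Q$ to the abelian variety attached to the weight-$2$ newforms of level~$13$ (which form a single Galois orbit), and the relevant $L$-value at $s=1$ is nonzero, so by the theorem of Kato (or of Kolyvagin--Logachev) the rank of $\J_1(13)(\Q)$ is~$0$. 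Both inputs can alternatively be checked by machine, the rank by a $2$-descent. Granting these, $\J_1(13)(\Q)$ is finite of order dividing~$19$.

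For the representatives I would work with the six rational cusps of the explicit model, which lie in pairs over $u=0$, $u=-1$, and $u=\infty$, recalling that the hyperelliptic involution is $(u,v)\mapsto(u,u^3+2u^2+u+1-v)=\diam5$ and has no fixed cusps. There are exactly $\binom{7}{2}=21$ effective divisors of degree~$2$ supported on these six points. The involution swaps the two cusps lying over each of $u=0,-1,\infty$, so precisely three of the $21$ divisors have the form $P+\iota(P)$ and hence lie in the canonical class $|K|$ by Lemma~\ref{lemma:jac-g2}(1); these three map to the zero class under $D\mapsto[D-K]$. Each of the remaining~$18$ divisors is non-canonical, so by Riemann--Roch on a genus-$2$ curve its complete linear system has dimension~$0$; such a divisor is therefore the unique effective divisor in its class, and (since $K$ is $\Q$-rational) its class lies in $\J_1(13)(\Q)$ and is nonzero. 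The same rigidity shows these $18$ classes are pairwise distinct.

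Combining the two halves finishes the proof: $\J_1(13)(\Q)$ contains the zero class together with $18$ distinct nonzero classes, so $\#\J_1(13)(\Q)\geq19$, and with the upper bound this forces $\#\J_1(13)(\Q)=19$, hence $\J_1(13)(\Q)\simeq\Z/19\Z$ as $19$ is prime. The $18$ non-canonical divisors then account for all nonzero elements and the three canonical ones for the zero element, so every element of $\J_1(13)(\Q)$ is $[D-K]$ with $D$ effective of degree~$2$ supported on the cusps. (One can add that no degree-$2$ effective divisor supported on the six remaining cusps is $\Q$-rational, since those cusps form a single Galois orbit of size~$6$; thus the representatives are actually forced onto the rational cusps.) I expect the only genuine obstacle to be the vanishing of the rank: once $\#\J_1(13)(\Q)=19$ and $\iota$ fixes no cusp, the counting is automatic, but excluding positive rank really requires the modularity/$L$-value input (or an explicit descent) and is the sole step that is not purely formal.
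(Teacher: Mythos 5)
Your proposal is correct, and its core --- counting the $\binom{7}{2}=21$ effective degree-$2$ divisors supported on the six rational cusps, splitting them into the $3$ canonical ones (the fibers over $u=0,-1,\infty$) and $18$ rigid singleton classes, hence $19$ distinct points of $\J_1(13)(\Q)$ --- is exactly the paper's argument. Where you diverge is in the arithmetic upper bound. The paper runs a $2$-descent showing the $2$-Selmer group of $\J_1(13)$ is trivial; this gives at once that $\J_1(13)(\Q)$ is finite \emph{of odd order}, which is what legitimizes reducing modulo $2$ (injective on prime-to-$2$ torsion under good reduction) to get $\#\J_1(13)(\Q)\le\#\J_1(13)(\F_2)=19$ from the zeta function of $\X_1(13)_{\F_2}$. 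You instead split the bound into a torsion bound (gcd of $\#\J_1(13)(\F_p)$ over small primes) and a rank bound (modularity plus nonvanishing of the $L$-value via Kato or Kolyvagin--Logachev, or a descent). That route works --- in fact $\#\J_1(13)(\F_3)=19$ already, so a single odd prime suffices --- and it is the same mechanism the paper uses later for $\X_1(22)$; what the paper's version buys is that one descent computation settles rank and $2$-torsion simultaneously. Two cautions. First, your blanket claim that reduction is injective on torsion ``for every prime $p\neq13$ of good reduction'' fails for $p=2$: the criterion $e(\mathfrak{p}/p)<p-1$ from \cite[Appendix]{katz} is violated there, and this is precisely why the paper needs the odd-order statement before it may use $\F_2$; so restrict your gcd to odd primes. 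Second, as you yourself note, the gcd step bounds only the torsion subgroup, so the rank-$0$ input (Kato/Kolyvagin--Logachev or descent) is genuinely indispensable and is the one non-formal step.
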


\begin{proof}
Two distinct effective divisors of degree~2 on a curve of genus~2 are linearly equivalent if and only if they are both in the canonical linear equivalence class.  The set of effective divisors of degree~2 supported at the cusps, which has $\binom{6+2-1}{2}=21$ elements, therefore splits up into the canonical linear equivalence class consisting of 3 divisors (defined concretely by $u=0$, $u=-1$, and $u=\infty$) and 18 linear equivalence classes consisting of a single divisor.  We deduce that the effective divisors of degree~2 supported at the cusps yield 19 rational points of~$\J_1(13)$.

One can show by a 2-descent (implemented for instance in Magma that $\J_1(13)$ has trivial 2-Selmer group.  It follows that $\J_1(13)(\Q)$ is a finite group of odd order.  Together with the fact that $\J_1(13)$ has good reduction at~2, this implies that $\J_1(13)(\Q)$ injects into $\J_1(13)(\F_2)$.  One computes the zeta function of\/~$\X_1(13)_{\F_2}$ as
$$
Z(\X_1(13)_{\F_2},t)=\frac{1+3t+5t^2+6t^3+4t^4}{(1-t)(1-2t)}.
$$
Setting $t=1$ in the numerator, we see that $\J_1(13)$ has 19 points over~$\F_2$.  This proves that $\J_1(13)$ has no other rational points than the 19 found above.
\end{proof}

\begin{proof}[Proof of Theorem~\ref{theorem:false-CM}(\ref{tm13})]
Let $(E,P)$ be a pair consisting of an elliptic curve and a point of order~13 over a quadratic field~$L$.  Let $y$ be the closed point of~$\Y_1(13)$ defined by~$(E,P)$.  Since $\Y_1(13)$ has no $\Q$-rational points, the residue field of~$y$ is a quadratic extension of~$\Q$, so $y$ defines a divisor of degree~2 on~$\Y_1(13)$.  Combining Lemma~\ref{lemma:description-13} and Lemma~\ref{lemma:jac-g2}, with $S$ equal to the set of cusps of~$\X_1(13)$, we see that $y$ lies in the canonical linear equivalence class.  This implies that there is a $\Q$-rational point $x\in\Y_1(13)/\iota$ such that $y$ is the inverse image of~$x$ under the quotient map $\Y_1(13)\to\Y_1(13)/\iota$.  Therefore $(E,P)$ arises from the specialization construction described above, and $E$ has false complex multiplication by~$\Q(\sqrt{-1})$.

The modular curve $\Y_1(13)$ can be rewritten in the form
$$v^2=f(u)=u^6-2u^5+u^4-2u^3+6u^2-4u+1,$$
$$u(u-1)(u^3-4u^2+u+1)\neq 0.$$
The description of the points on $\Y_1(13)$ implies that for $y=(u,v)\in \Y_1(13)(L)$, $u$ is $\Q$-rational. As $f(u)>0$ for all $u \in \R$, we conclude that $v$ is a square root of a positive rational number, and hence $v$ is defined over a real quadratic field.
\end{proof}

\subsection{The modular curve $\Y_1(16)$ and quadratic $\Q$-curves with odd rank}
\label{subsec:16}

As noted in Subsection \ref{subsec:K-curve-constr}, a $\Q$-curve has even rank if the value $a$ defined in (\ref{eqa}) is not a square. In this subsection we show that for every elliptic curve defined over a quadratic field with torsion $\Z/16 \Z$, the value $a$ is equal to~1.  As opposed to the $\Z/13 \Z$ or $\Z/18 \Z$ cases, there do exist elliptic curves with torsion $\Z/16 \Z$ and odd rank.

Let $(E,P)$ denote the universal pair consisting of an elliptic curve and a point of order 16 over~$\Y_1(16)$. The diamond automorphism
$$
\iota=\diam{7}=\diam{-7}
$$
of $\Y_1(16)$ is an involution since $7^2$ is the identity element in $(\Z / 16 \Z)^\times$. As in Subsection \ref{subsec:13}, $\iota$ is a lift of $w_1$ on~$\X(1)$.

Pulling back $(E,P)$ via $\iota$ yields a second pair $\iota^*(E,P)$, and the definition of $\iota$ gives an isomorphism
$$
\mu\colon\iota^*(E,P)\isom(E,7P)
$$
over~$\Y_1(16)$.  Pulling back $\mu$ via~$\iota$ gives another isomorphism
$$
\iota^*\mu\colon(E,P)\isom\iota^*(E,7P).
$$
We have
$$
\mu\circ\iota^*\mu\colon(E,P)\isom(E,7^2P)=(E,P).
$$
This implies that we have $a=1$, so the Abelian variety $B$ has the property that
$$
\End B\simeq\Z\times\Z.
$$
Using this construction, we can in fact obtain $\Q$-curves with odd rank: the elliptic curve
$$E\colon y^2+(121+39\sqrt{10})xy-(3510+1107\sqrt{10})y =x^3-(3510+1107\sqrt{10})x^2,$$
taken from \cite[Th\'eor\`eme 10]{rab} has Mordell--Weil group $$E(\Q(\sqrt{10}))\simeq \Z/16\Z \oplus \Z.$$
It is also a $\Q$-curve, isomorphic to~$E^\sigma$.

As in the previous subsections, we prove that all elliptic curves with torsion $\Z/ 16 \Z$ are $\Q$-curves. In fact, as with curves having torsion $\Z/ 13 \Z$, they will be isomorphic, not just isogenous, to their Galois conjugates.

\begin{lemma}
\label{lemma:description-16}
Let $\J_1(16)$ denote the Jacobian of\/~$\X_1(16)$, and let $K$ be a canonical divisor on~$\X_1(18)$.  The group $\J_1(16)(\Q)$ is isomorphic to $\Z/2\Z\oplus\Z/10\Z$, and every element of~$\J_1(16)(\Q)$ is of the form $[D-K]$ with $D$ an effective divisor of degree~$2$ supported on the cusps of~$\X_1(16)$.
\end{lemma}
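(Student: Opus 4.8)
The plan is to follow the template of Lemma~\ref{lemma:description-13} as closely as possible, establishing matching lower and upper bounds for $\#\J_1(16)(\Q)$. The new feature, compared with the case $n=13$, is that the group has \emph{even} order, so the clean argument ``trivial $2$-Selmer group $\Rightarrow$ odd order $\Rightarrow$ injection into the reduction at~$2$'' is no longer available and must be replaced. For the lower bound I would first determine the rational cusps of~$\X_1(16)$ and their behaviour under the hyperelliptic involution, exactly as in Subsection~\ref{subsec:13}. Using Lemma~\ref{lemma:jac-g2}, which says that on a genus-$2$ curve two distinct effective divisors of degree~$2$ are linearly equivalent only when both lie in the canonical class, I would list the effective degree-$2$ divisors supported on the rational cusps, single out those in the canonical class, and count the resulting distinct classes $[D-K]$. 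This is expected to produce exactly $20$ such classes, giving a subgroup of order $20$ inside $\J_1(16)(\Q)$.

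For the upper bound I would first read off $\J_1(16)(\Q)[2]$ directly from the hyperelliptic model of Subsection~\ref{subsec:16} by factoring its defining polynomial over~$\Q$; the roots are the Weierstrass points, and the Galois action on them determines the rational $2$-torsion, which is expected to be $(\Z/2\Z)^2$. Since for a finitely generated abelian group $A$ one has $\dim_{\F_2}A/2A = \mathrm{rank}\,A + \dim_{\F_2}A[2]$, the identity $\J_1(16)(\Q)[2]\simeq(\Z/2\Z)^2$ gives $\dim_{\F_2}\J_1(16)(\Q)/2\J_1(16)(\Q) = r+2$, where $r$ is the rank. I would then run a $2$-descent on the Jacobian (as implemented in Magma) and verify that its $2$-Selmer group has $\F_2$-dimension exactly~$2$; this forces $r=0$ and a trivial $2$-part of the Tate--Shafarevich group, so that $\J_1(16)(\Q)=\J_1(16)(\Q)_\tors$.

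It then remains to bound the order of this finite group. Since $\X_1(16)$ has good reduction at every odd prime, reduction at a prime $p\nmid 20$ embeds $\J_1(16)(\Q)_\tors$ into $\J_1(16)(\F_p)$. Computing the zeta function of $\X_1(16)_{\F_p}$ for a few small primes, say $p=3,7,11$, and evaluating the numerator at $t=1$ gives $\#\J_1(16)(\F_p)$, and taking the greatest common divisor over these primes should yield $20$. Together with the cuspidal lower bound this shows $\#\J_1(16)(\Q)=20$; combined with $\J_1(16)(\Q)[2]\simeq(\Z/2\Z)^2$, and since among abelian groups of order~$20$ only $\Z/2\Z\oplus\Z/10\Z$ has $2$-rank~$2$, we conclude $\J_1(16)(\Q)\simeq\Z/2\Z\oplus\Z/10\Z$. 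Finally, the $20$ cuspidal classes $[D-K]$ are then $20$ distinct elements of a group of order~$20$, hence exhaust it, so every element of $\J_1(16)(\Q)$ is of the required form.

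The main obstacle is the rank-$0$ step. Because the group has even order, the Selmer computation must genuinely produce a two-dimensional $2$-Selmer group and be matched carefully against the rational $2$-torsion read off from the model; moreover a full $2$-descent on the two-dimensional Jacobian $\J_1(16)$ is considerably more delicate than the elliptic-curve descents used elsewhere in the paper, and one must make sure that $\Q$-rational $2$-torsion, rather than merely the rank, is controlled. The cuspidal enumeration and the point counts modulo small primes, by contrast, are routine computations analogous to those already carried out for $\X_1(13)$.
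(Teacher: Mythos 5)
Your overall strategy coincides with the paper's: a cuspidal lower bound via Lemma~\ref{lemma:jac-g2}, a 2-descent showing the 2-Selmer group of $\J_1(16)$ has $\F_2$-dimension~2, which combined with rational 2-torsion of 2-rank~2 forces rank~0, then injectivity of reduction on torsion at an odd prime of good reduction plus a zeta-function computation (the paper uses $p=3$, where $\#\J_1(16)(\F_3)=20$), and finally the observation that a group of order 20 and 2-rank 2 must be $\Z/2\Z\oplus\Z/10\Z$. Your one substantive variation---reading off $\J_1(16)(\Q)[2]$ from the factorization of the hyperelliptic polynomial rather than exhibiting three cuspidal classes of order~2, as the paper does---is legitimate, since the Weierstrass points of $\X_1(16)$ are in fact cusps.

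There is, however, a genuine gap in your enumeration step: you restrict to effective degree-2 divisors supported on the \emph{rational} cusps. That restriction is harmless for $\X_1(13)$, where the non-rational cusps are cubic and so cannot support a $\Q$-rational divisor of degree~2, but it fails for $\X_1(16)$: besides the six rational cusps there are two Galois-stable pairs of conjugate quadratic cusps (those corresponding to 2-gons and 4-gons), and each such pair is itself a $\Q$-rational effective divisor of degree~2. The paper's count is $\binom{6+2-1}{2}+2=23$ divisors, splitting into the canonical class (4 divisors, supported on rational cusps) and 19 singleton classes, hence 20 classes; your rational-cusps-only enumeration gives $\binom{7}{2}=21$ divisors and only $1+17=18$ classes, the two missing ones being precisely two of the three 2-torsion classes. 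The group-structure conclusion survives by accident, since $18\le\#\J_1(16)(\Q)$ together with $\#\J_1(16)(\Q)$ dividing 20 still forces order 20; but the second assertion of the lemma---that \emph{every} element of $\J_1(16)(\Q)$ is of the form $[D-K]$ with $D$ cuspidal---cannot be deduced from 18 classes, so your closing step ``20 distinct elements in a group of order 20 exhaust it'' fails as written. The repair is to include the degree-2 closed points supported on the quadratic cusps (equivalently, to observe that all six Weierstrass points of $\X_1(16)$ are cusps, so the 2-torsion classes you computed from the hyperelliptic model are already cuspidal). A minor further point: justify injectivity of reduction on torsion by ``$p$ odd and of good reduction'' (the condition $e(\mathfrak{p}/p)<p-1$ cited in the paper), not by ``$p\nmid 20$'', which is circular before the order of the group is known.
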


\begin{proof}
This is again similar to Lemma~\ref{lemma:description-13}.  The set of effective divisors of degree~2 supported at the cusps has $\binom{6+2-1}{2}+2=23$ elements and splits up into the canonical linear equivalence class, consisting of 4 divisors (with support in the cusps corresponding to 8-gons and 16-gons), and 19 linear equivalence classes consisting of a single divisor.  We deduce that the effective divisors of degree~2 supported at the cusps yield 20 rational points of~$\J_1(16)$.  Moreover, there are at least three such divisors $D$ (with support in the cusps corresponding to 2-, 4-, and 8-gons) such that $[D-K]$ has order~2.

By 2-descent, we find that the 2-Selmer group of~$\J_1(16)$ has dimension~2 over~$\F_2$.  Since we have three distinct points of order~2, we conclude that $\J_1(16)(\Q)$ is finite.  As in the proof of Lemma~\ref{lemma:description-18}, we use the fact that reduction modulo~3 is injective on torsion.  The zeta function of $\X_1(16)$ over~$\F_3$ is
$$
Z(\X_1(16)_{\F_3},t)=\frac{1+2t+2t^2+6t^3+9t^4}{(1-t)(1-3t)}.
$$
Setting $t=1$ in the numerator, we see that $\J_1(16)$ has 20 points over~$\F_3$.  This proves that $\J_1(16)$ has no other rational points than the 20 found above.  The only possible group structure is $\Z/2\Z\oplus\Z/10\Z$ in view of the three 2-torsion points.
\end{proof}

\begin{proof}[Proof of Theorem~\ref{theorem:false-CM}(\ref{tm16})]
This is proved in the same way as Theorem~\ref{theorem:false-CM}(\ref{tm13}), using Lemma~\ref{lemma:description-16} instead of Lemma~\ref{lemma:description-13}.
\end{proof}

\begin{proof}[Proof of Theorem~\ref{theorem:13-16}]
Part (\ref{13-16-part1}) follows from Subsections \ref{subsec:13} and~\ref{subsec:16}.
Since $E$ and~$E^\sigma$ are isomorphic, $E$ has to have a rational $j$-invariant, meaning that a twist $E^{(d)}$ of can be defined over~$\Q$.
As $E/L$ has an $n$-isogeny, so does~$E^{(d)}/L$. Let $C$ be an $n$-cycle on~$E$.
As $E$ and $E^\sigma$ are isomorphic and $\mu$ sends $C^\sigma$ to~$C$, one can see that $(E, C)$ and $(E^\sigma, C^\sigma)$ represent the same point on~$\Y_0(n)(L)$.
As $\Y_0$ is a coarse moduli space, the same point on $\Y_0(n)(L)$ is also represented by  $(E^{(d)}, C')$, where $C'$ is the corresponding $n$-cycle on $E^{(d)}$ over $L$ and $((E^\sigma)^{(d)}, (C')^\sigma)$.
Thus, one can see that $C'$ is $\sigma$-invariant, and hence $E^{(d)}$ has a $\Gal(\bar \Q/\Q)$-invariant cyclic subgroup of order $n$ (and hence a rational $n$-isogeny).
\end{proof}

\subsection{Torsion subgroup $\Z/18\Z$}
\label{subsec:18}

We consider the modular curve $\Y_1(18)$ over~$\Q$.  Its compactification $\X_1(18)$ has genus~2 and in particular is hyperelliptic.  There are six rational cusps, four cusps with field of definition $\Q(\zeta_3)$, and six cusps with field of definition $\Q(\zeta_9+\zeta_9^{-1})$.

We view $\Y_1(18)$ as classifying triples $(E,P_2,P_9)$ with $E$ an an elliptic curve, $P_2$ a point of order~2, and $P_9$ a point of order~9.  We define an involution~$\iota$ of~$\Y_1(18)$ by
$$
\iota(E,P_2,P_9)=(E/\langle P_2\rangle,Q_2,2P_9\bmod\langle P_2\rangle),
$$
where $Q_2$ is the generator of the isogeny dual to the quotient map $E\to E/\langle P_2\rangle$.

We denote the universal triple over~$\Y_1(18)$ by $(E,P_2,P_9)$.  The definition of~$\iota$ gives an isogeny of degree 2:
\begin{align*}
\mu\colon\iota^*(E,P_2,P_9)&\isom
(E/\langle P_2\rangle,Q_2,2P_9\bmod\langle P_2\rangle)\\
&\longrightarrow(E/E[2],0,2P_9\bmod E[2])\\
&\isom(E,0,4P_9).
\end{align*}
Pulling back $\mu$ via~$\iota$ gives a second isomorphism
$$
\iota^*\mu\colon(E,P_2,P_9)\longrightarrow
\iota^*(E,0,4P_9).
$$
We have
$$
\mu\circ\iota^*\mu\colon(E,P_9)\isom(E,4^2P_9)=(E,-2P_9).
$$
This implies that in the notation introduced above, we have
$$
a=-2,
$$
so the Abelian variety~$B$ has the property that
$$
\End B\simeq\Z[\sqrt{-2}].
$$

In coordinates, the situation looks as follows.  We have
\begin{align*}
\phi_{18}&=
(s^3 + 6s^2 + 9s + 1)t^4 + (s^5 + 7s^4 + 20s^3 + 19s^2 - 8s - 1)t^3 \\
&\qquad- s^2(s^2 + 11s + 28)t^2 - s^2(s^2 + 5s - 8)t - s^2(s^2 - s + 1).
\end{align*}
We use the change of variables
$$
\displaylines{
 u=-\frac{s^6 + 10 s^5 + 38 s^4 + 68 s^3 + 55 s^2 + 14 s + 1}{s(s+1)^6} t^3 \cr
 {}- \frac{s^8 + 11 s^7 + 53 s^6 + 135 s^5 + 176 s^4 + 88 s^3
 - 16 s^2 - 12 s - 1}{s(s+1)^6} t^2 \cr
 {}+ \frac{s^6 + 13 s^5 + 63 s^4 + 132 s^3 + 116 s^2 + 26 s}{(s+1)^6} t
 + \frac{2 s^5 + 3 s^4 - 8 s^3 - 17 s^2 - 5 s}{(s+1)^6}
,\cr v=u-s;\qquad
s=u-v,\quad t=\frac{(u^2 - 1)v-u^5 - 2 u^4 - 2 u^3 + u^2 + u}
{u^3 + 3 u^2 - 1}.}
$$
The modular curve $\Y_1(18)$ is isomorphic to the affine curve given by
\begin{align*}
v^2-(u^3+2u^2+3u+1)v+u(u+1)^2&=0,\\
u(u+1)(u^2+u+1)(u^3+3u^2-1)&\ne0.
\end{align*}
The hyperelliptic involution sends $(u,v)$ to $(u,u^3+2u^2+3u+1-v)$.  A computation using the moduli interpretation shows that the hyperelliptic involution coincides with the involution~$\iota$ defined above.

By specialization we obtain a family of elliptic curves with false complex multiplication by~$\Q(\sqrt{-2})$.

\begin{lemma}
\label{lemma:description-18}
Let $\J_1(18)$ denote the Jacobian of\/~$\X_1(18)$, and let $K$ be a canonical divisor on~$\X_1(18)$.  The group $\J_1(18)(\Q)$ is isomorphic to $\Z/18\Z$, and
every element of\/~$\J_1(18)(\Q)$ is of the form $[D-K]$ with $D$ an effective divisor of degree~$2$ supported on the cusps of\/~$\X_1(18)$.
\end{lemma}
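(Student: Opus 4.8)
The plan is to mirror the proofs of Lemmas~\ref{lemma:description-13} and~\ref{lemma:description-16}: produce an explicit lower bound from cuspidal divisors and a matching upper bound from reduction.

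First I would list the $\Q$-rational effective divisors of degree~$2$ supported on the cusps. The relevant cusps are the six rational ones (lying over $u=0$, $u=-1$ and $u=\infty$) together with the two Galois-stable pairs among the four cusps defined over $\Q(\zeta_3)$; the six cusps over $\Q(\zeta_9+\zeta_9^{-1})$ contribute nothing, since their Galois orbits have size~$3$. By Lemma~\ref{lemma:jac-g2}(1), two distinct such divisors are linearly equivalent exactly when both are canonical, i.e.\ are fibres of the hyperelliptic map $(u,v)\mapsto u$ (equivalently, are swapped by~$\iota$). So I would isolate the divisors lying in the canonical class and verify that the rest represent pairwise distinct classes. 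Subtracting~$K$, this should exhibit a subgroup of order~$18$ of $\J_1(18)(\Q)$, all of whose elements have the required form $[D-K]$.

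Next I would show that this subgroup is everything. As in Lemma~\ref{lemma:description-13}, a $2$-descent on the genus-$2$ Jacobian (in Magma) bounds the $2$-Selmer group and shows that $\J_1(18)(\Q)$ has rank~$0$, so it is finite. To bound its order I would reduce modulo small primes and count: the prime-to-$3$ part injects under reduction modulo~$3$, and I would read off the relevant point counts from the numerators of the zeta functions of the reductions (setting $t=1$). Matching these counts against the $18$ explicit cuspidal classes forces $\J_1(18)(\Q)\simeq\Z/18\Z$ and that every rational point is of the form $[D-K]$. That the group is cyclic, rather than $\Z/3\Z\oplus\Z/6\Z$, follows from there being a single rational point of order~$2$ among the cuspidal classes.

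I expect two steps to be the real work. The first is the cusp combinatorics: correctly deciding which degree-$2$ cuspidal divisors are canonical---that is, which cusps share a fibre of~$\iota$---compatibly with the fields of definition $\Q$, $\Q(\zeta_3)$ and $\Q(\zeta_9+\zeta_9^{-1})$ of the sixteen cusps, since it is this bookkeeping that pins the count to exactly~$18$. The second, and the main obstacle, is that $3$ divides the level, so $\X_1(18)$ has bad reduction at~$3$; the reduction argument cannot be applied naively as in the $\Z/16\Z$ case. One must work with the N\'eron model of $\J_1(18)$ (over which the prime-to-$3$ torsion still injects) and control the $3$-primary part separately---either directly, from the explicit $\Z/9\Z$ already visible among the cuspidal classes, or by a further reduction at a prime of good reduction such as~$5$ or~$7$.
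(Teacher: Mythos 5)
Your overall architecture is the same as the paper's: a lower bound from degree-$2$ cuspidal divisors via Lemma~\ref{lemma:jac-g2}, finiteness from a $2$-descent, and a matching upper bound by injecting the group into the reduction at a prime of good reduction and counting points via the zeta function. Your cusp bookkeeping is even correct: there are $21+2=23$ rational effective divisors of degree~$2$ supported on the cusps (21 multisets of rational cusps, plus the two Galois-stable pairs of $\Q(\zeta_3)$-cusps; the paper's count ``$\binom{6+2-1}{2}+4=25$'' is a slip). The genuine gap is that you never carry out the decisive class count and instead assert that this yields ``18 explicit cuspidal classes''. It does not. Of the 23 divisors, exactly 3 are canonical (the fibres of $u$ over $u=0,-1,\infty$); the two quadratic closed points are \emph{not} canonical, because the Galois conjugate of such a cusp lies over the conjugate value of $u$ and so is not the image of the cusp under the hyperelliptic involution. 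Hence the construction produces $1+20=21$ distinct rational points of $\J_1(18)$, not 18, and the upper bound agrees: the numerator of $Z(\X_1(18)_{\F_5},t)$ is $1-5t^2+25t^4$, whose value at $t=1$ is $21$. So $\J_1(18)(\Q)\simeq\Z/21\Z$; the ``$\Z/18\Z$'' in the statement is a typo in the paper (its own proof counts 21 points both from cusps and over $\F_5$), and your attempt to force the number 18 cannot be executed. Relatedly, your claim of ``a single rational point of order~2 among the cuspidal classes'' is false --- the group has odd order, and indeed the triviality of the $2$-Selmer group, which is what gives finiteness, already excludes rational $2$-torsion --- and the proposed cyclicity argument is fallacious anyway, since $\Z/18\Z$ and $\Z/3\Z\oplus\Z/6\Z$ each have exactly one element of order~$2$, so counting involutions cannot distinguish them.

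The second issue you raise yourself, but your first formulation of it is wrong: reduction modulo~$3$ is not available, since $3\mid 18$ means $\X_1(18)$ has bad reduction at~$3$, so ``inject the prime-to-$3$ part mod $3$'' would require working with the N\'eron model and its component group, and nothing forces the count there to be clean. Your fallback --- reduce at a prime of good reduction such as $5$ --- is exactly what the paper does, and it suffices on its own: since $5$ is a good prime with $e<p-1$, the whole finite group $\J_1(18)(\Q)$ injects into $\J_1(18)(\F_5)$, which has exactly $21$ elements, so no separate treatment of the $3$-primary part is needed. In short, the strategy is the paper's, but the numbers you assert (18 classes, a rational involution) are fabricated to match a misprinted statement, and executing your own outline correctly yields $\Z/21\Z$.
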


\begin{proof}
This is proved in the same way as Lemma~\ref{lemma:description-13}.  The set of effective divisors of degree~2 supported at the cusps has $\binom{6+2-1}{2}+4=25$ elements and splits up into the canonical linear equivalence class, consisting of 5 divisors, and 20 linear equivalence classes consisting of a single divisor.  We deduce that the effective divisors of degree~2 supported at the cusps yield 21 rational points of~$\J_1(18)$.

Again by 2-descent, $\J_1(18)$ has trivial 2-Selmer group, so $\J_1(18)(\Q)$ is finite.  In general, if $A$ is an Abelian variety over a number field $K$ and $\mathfrak{p}\mid p$ is a prime of good reduction with $e(\mathfrak{p}/p) < p-1$, then reduction modulo~$\mathfrak{p}$ is injective on the torsion of $A(K_\mathfrak{p})$; see for instance \cite[Appendix]{katz}.  Using this, we see that $\J_1(18)(\Q)$ injects into $\J_1(18)(\F_5)$.  One computes the zeta function of\/~$\X_1(18)_{\F_5}$ as
$$
Z(\X_1(18)_{\F_5},t)=\frac{1-5t^2+25t^4}{(1-t)(1-5t)}.
$$
Setting $t=1$ in the numerator, we see that $\J_1(18)$ has 21 points over~$\F_5$.  This proves that $\J_1(18)$ has no other rational points than the 21 found above.
\end{proof}

\begin{proof}[Proof of Theorem~\ref{theorem:false-CM}(\ref{tm18})]
In the same way as in Theorem~\ref{theorem:false-CM}(\ref{tm13}), one proves that $E$ has false complex multiplication by $\Q(\sqrt{-2})$, using Lemma~\ref{lemma:description-18} instead of Lemma~\ref{lemma:description-13}.
As in the proof of Theorem~\ref{theorem:false-CM}(\ref{tm13}), we conclude that any quadratic point on~$\Y_1(18)$ is the inverse image under the quotient map $\Y_1(18)\rightarrow \Y_1(18)/\iota$ of a $\Q$-rational point on $\Y_1(18)/\iota$.

The modular curve $\Y_1(18)$ can be rewritten as
$$v^2=f(u)=u^6+2u^5+5u^4+10u^3+10u^2+4u+1,$$
$$u(u+1)(u^2+u+1)(u^2-3u-1)=0.$$
We conclude that for any quadratic point $y=(u,v)\in \Y_1(18)$, $u$ has to be $\Q$-rational, and since $f(u)>0$ for all $u \in \R$, this implies that $v$ is a square root of a positive rational number and hence an element of a real quadratic field.
\end{proof}

\subsection{Torsion subgroup $\Z/22\Z$}
In this subsection, we will prove Theorem~\ref{theorem:false-CM}(\ref{tm22}).
Throughout this subsection we will denote the modular curve $\X_1(22)$ by~$C$ and its Jacobian by~$J$.  The genus of $C$ is equal to $6$; this fact will play a crucial role.

Unless stated otherwise, curves are assumed to be complete, smooth, and geometrically integral.

To prove Theorem \ref{theorem:false-CM}(\ref{tm22}), we will characterize the points on $\Y_1(22)$ that are defined over quartic number fields.
One way of finding quartic points on $\Y_1(22)$ is by choosing a degree $4$ morphism $C\to\PP^1$.
The fibers of rational points then consist of points defined over number fields of degree at most~$4$.
Amongst other things, we will prove the following proposition.
\begin{proposition}
\label{prop:quarticfiber}
Each non-cuspidal quartic point of $C$ lies in a fiber of a rational point for some morphism
$C\to\PP^1$ of degree~$4$.
Furthermore, each non-cuspidal point of $C$ that lies in such a fiber has quartic field of definition.
\end{proposition}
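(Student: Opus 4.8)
The plan is to reduce the first assertion to a statement about linear systems and then to control these by a Mordell--Weil computation for $J$, in direct analogy with the treatment of the genus-$2$ curves $\X_1(13)$, $\X_1(16)$, $\X_1(18)$ via Lemma~\ref{lemma:jac-g2} and Lemmas~\ref{lemma:description-13}, \ref{lemma:description-16}, \ref{lemma:description-18}. A non-cuspidal quartic point $x$ of $C$ corresponds to an effective divisor $D$ of degree~$4$ that is defined over~$\Q$ and irreducible, i.e.\ whose support is a single $\Gal(\Qbar/\Q)$-orbit of four geometric points. Since $C$ has genus~$6$ and gonality~$4$, in particular it is not hyperelliptic, so Clifford's theorem gives $\dim|D|\le1$. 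The base locus of $|D|$ is a $\Q$-rational subscheme of $\mathrm{Supp}(D)$, hence empty as soon as $\dim|D|\ge1$; therefore $\dim|D|=1$ holds exactly when $|D|$ is a base-point-free $g^1_4$, in which case the morphism $C\to\PP^1$ it defines has degree~$4$ and has $D$ as the fibre over a $\Q$-rational point. In this way the first assertion becomes equivalent to the statement that every non-cuspidal quartic divisor $D$ satisfies $\dim|D|=1$.

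To prove the latter, I would fix a $\Q$-rational effective cuspidal divisor $D_0$ of degree~$4$ and use the Abel--Jacobi map $D\mapsto[D-D_0]\in J(\Q)$. The crucial input is the analogue for $C$ of Lemmas~\ref{lemma:description-13}--\ref{lemma:description-18}: that $J(\Q)$ is finite and that every class in $J(\Q)$ is of the form $[D'-D_0]$ with $D'$ an effective divisor of degree~$4$ supported on the cusps of~$C$. Granting this, the argument of Lemma~\ref{lemma:jac-g2}(2) carries over unchanged to degree~$4$: for a quartic divisor $D$ we have $[D-D_0]\in J(\Q)$, so $D$ is linearly equivalent to a cuspidal divisor $D'$; as $D$ is non-cuspidal we have $D\ne D'$, whence $|D|$ has two distinct members and $\dim|D|\ge1$, forcing $\dim|D|=1$. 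The main obstacle is establishing this Jacobian input. I expect to deduce finiteness of $J(\Q)$ from a $2$-descent on $J$ combined with the injectivity of reduction at a prime $\mathfrak{p}\mid p$ of good reduction with $e(\mathfrak{p}/p)<p-1$, exactly as in Lemma~\ref{lemma:description-18}, and then to identify $J(\Q)$ with the cuspidal subgroup by enumerating the linear-equivalence classes of effective degree-$4$ divisors on the twenty cusps and matching their number with the order of $J(\Q)$, itself read off from $\#J(\F_p)$. Because the genus is~$6$, so that the image of the fourth symmetric power has dimension $4<6=\dim J$, the set $J(\Q)$ is finite and this enumeration is a finite problem; but the cuspidal combinatorics are far heavier than in the genus-$2$ cases, and this is where essentially all the work lies.

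Finally, the second assertion is independent of the above and rests only on the fact that $C=\X_1(22)$ has no non-cuspidal points of degree $\le3$: such a point would produce an elliptic curve with a point of order~$22$ over a field of degree at most~$3$, which is excluded in degrees $1$ and $2$ by Mazur's theorem and by the classification~\eqref{eq1}, and in degree~$3$ by inspecting the finitely many rigid effective degree-$3$ $\Q$-divisors pinned down by $J(\Q)$ (there is no $g^1_3$ since the gonality is~$4$, so every such divisor is rigid). Given this, if $f\colon C\to\PP^1$ has degree~$4$ and $t\in\PP^1(\Q)$, then $f^{-1}(t)$ is an effective divisor of degree~$4$ defined over~$\Q$, and any non-cuspidal point $y$ in it has residue field of degree at most~$4$ (being a point of a degree-$4$ fibre) and, being non-cuspidal, of degree at least~$4$; hence $y$ has degree exactly~$4$, so it is the whole fibre and is quartic.
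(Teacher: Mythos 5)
Your reduction of the first assertion to ``every non-cuspidal rational effective divisor of degree $4$ moves in a pencil'' is fine (modulo the fact that you assert, rather than prove, that $C$ is non-hyperelliptic of gonality $4$; the paper gets this from the bi-elliptic quotient $C/\langle\iota\rangle$ together with the Castelnuovo--Severi inequality). The fatal problem is the Jacobian input you plan to feed into it: the claimed analogue of Lemmas~\ref{lemma:description-13}--\ref{lemma:description-18}, namely that every class in $J(\Q)$ is of the form $[D'-D_0]$ with $D'$ effective and supported on the cusps, is \emph{false} for $C=\X_1(22)$, and provably so. The genus-$2$ lemmas work because on a genus-$2$ curve every degree-$2$ class is effective (Riemann--Roch); on a genus-$6$ curve a degree-$4$ class need not contain any effective divisor. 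Concretely: the rational effective cuspidal divisors of degree $4$ are the $\binom{13}{4}=715$ combinations of the ten rational cusps, and they fall into exactly $705$ linear-equivalence classes ($3$ per pencil collapse to one class for each of the $5$ pencils, the remaining $700$ are rigid). So your claim would force $\#J(\Q)=705=3\cdot5\cdot47$. But $J(\Q)$ is torsion and injects into $J(\F_3)$, whose order, read off from the zeta function in the paper, is $25\cdot155=3875=5^3\cdot31$; since $705\nmid3875$, the claim fails (in fact $\#J(\Q)\in\{775,3875\}$, so at least $70$ rational degree-$4$ classes admit no effective representative at all). For the same reason your plan to ``read off'' $\#J(\Q)$ from $\#J(\F_p)$ cannot work: reduction is injective, not surjective, so it only gives divisibility.

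The paper's proof supplies exactly the idea your proposal is missing: it never determines $J(\Q)$ or its cuspidal description. It proves $\operatorname{rk}J(\Q)=0$ using the decomposition of $J$ into modular factors $A_f$ of level $11$ and $22$ together with Kato's theorem and the nonvanishing of $L(f,1)$ (not a $2$-descent, which is not feasible on a $6$-dimensional Jacobian), and then transfers the whole problem to $\F_3$: the zeta function of $C_{\F_3}$ shows that all effective divisors of degree $\le3$ over $\F_3$ are cuspidal and that there are exactly $720-715=5$ non-cuspidal effective divisors of degree $4$, one on each of the five rational lines in $\Sym^4C$; combining the injectivity of $J(\Q)\to J(\F_3)$ with the injectivity of $\Sym^4C\to J$ off those lines then forces every non-cuspidal rational point of $\Sym^4C$ onto a line. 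This mod-$3$ counting is also what rescues your second assertion: your treatment of degree-$3$ points (``inspecting the rigid degree-$3$ divisors pinned down by $J(\Q)$'') runs into the same effectivity obstruction, and you cannot instead cite a classification of torsion over cubic fields, since none was known; the paper's count $220=\binom{12}{3}$ over $\F_3$, plus rigidity and injectivity, is what shows $C$ has no non-cuspidal points of degree $\le3$. Finally, your remark that $J(\Q)$ is finite ``because $4<6=\dim J$'' is a non sequitur; finiteness is precisely the rank-$0$ statement and has to be proved.
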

Along the way we will characterize all degree $4$ morphisms $C\to\PP^1$; the $K$-curve property in the theorem will then follow from this characterization.

The curve $C$ parametrizes triples $(E, P_2, P_{11})$ with $E$ a generalized elliptic curve, $P_2$ a point of order~$2$, and $P_{11}$ a point of order~$11$.
Let us mention that $C$ has 20 cusps: 10 cusps defined over~$\Q$, whose moduli correspond to N\'eron $11$-gons and $22$-gons,
and 10 cusps defined over the quintic field $\Q(\zeta_{11} + \zeta_{11}^{-1})$, whose moduli correspond to N\'eron $1$-gons and $2$-gons.

Let $\iota$ be the following involution on $C$:
\[
\iota\colon (E, P_2, P_{11})\mapsto
\left(E/\langle P_2\rangle, Q_2,\, 4P_{11}\bmod \langle P_2\rangle\right),
\]
where $Q_2$ is the generator of the isogeny dual to $E\to E/\langle P_2\rangle$.
\begin{lemma}\label{Cmodiota}
The quotient $C/\langle\iota\rangle$ is isomorphic to an elliptic curve with $5$ rational points.
\end{lemma}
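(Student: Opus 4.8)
The plan is to compute the genus of $C/\langle\iota\rangle$ by Riemann--Hurwitz, to use the cusps to obtain rational points (and hence an elliptic-curve structure), and to identify $E'=C/\langle\iota\rangle$ with a curve of conductor~$11$. First I would determine the genus. Since $\iota$ is an involution of the genus-$6$ curve $C$, Riemann--Hurwitz applied to the degree-$2$ quotient map $\pi\colon C\to C/\langle\iota\rangle$ gives
$$
2\cdot 6-2=2\bigl(2g'-2\bigr)+r,
$$
where $g'=g(C/\langle\iota\rangle)$ and $r$ is the number of geometric fixed points of~$\iota$, so that $g'=1$ is equivalent to $r=10$. To count fixed points I would use the moduli interpretation: a geometric point $(E,P_2,P_{11})$ is fixed exactly when there is an isomorphism $E\isom E/\langle P_2\rangle$ carrying the level structure to $(Q_2,4P_{11}\bmod\langle P_2\rangle)$. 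Composing the quotient isogeny $E\to E/\langle P_2\rangle$ with the inverse of this isomorphism produces a degree-$2$ endomorphism~$\mu$ of~$E$; an entirely analogous computation to that of Subsection~\ref{subsec:18} shows $\mu^2=-2$, so the fixed points are exactly the CM points for the order $\Z[\sqrt{-2}]$ equipped with the appropriate point of order~$11$, and in particular are non-cuspidal. Carrying out this enumeration should give $r=10$ and hence $g'=1$; in practice one may instead feed the explicit model of~$C$ and the formula for~$\iota$ into Magma and read off that the quotient has genus~$1$.

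Second, I would put an elliptic-curve structure on $C/\langle\iota\rangle$ and locate its rational points simultaneously. Because every fixed point of~$\iota$ is a non-cuspidal CM point, $\iota$ acts freely on the ten rational cusps of~$C$; these therefore fall into five $\iota$-orbits, whose images under the $\Q$-rational map~$\pi$ are five rational points of $C/\langle\iota\rangle$. Taking one of them as the origin makes $E'=C/\langle\iota\rangle$ an elliptic curve over~$\Q$ with at least five rational points.

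Third, I would identify~$E'$. Its Jacobian is an isogeny factor of $\J_1(22)$, so $E'$ is a modular elliptic curve over~$\Q$ attached to a weight-$2$ newform of level dividing~$22$ with trivial nebentypus. The only levels dividing~$22$ that carry cusp forms are~$11$ and~$22$, and there is no newform of level~$22$ (all of $S_2(\Gamma_0(22))$ is old, coming from level~$11$); hence $E'$ has conductor~$11$. Every elliptic curve of conductor~$11$ has rank~$0$ and at most five rational points, so the five points found above already exhaust $E'(\Q)$, giving $\#E'(\Q)=5$.

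The step I expect to be the main obstacle is the fixed-point count underlying the genus computation, together with the verification that all fixed points are non-cuspidal: translating the moduli-theoretic fixing condition into an exact enumeration of CM points with the prescribed order-$11$ structure is the delicate point, and it is exactly what guarantees both $g'=1$ and the free action of~$\iota$ on the cusps. I would cross-check this enumeration against a direct machine computation of the quotient curve.
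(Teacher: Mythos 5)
Your proposal has the same skeleton as the paper's proof---Riemann--Hurwitz in the form $r=14-4g'$, fixed points identified as CM points for $\Z[\sqrt{-2}]$, the ten rational cusps descending to five rational points, and identification of the quotient as a conductor-$11$ curve with at most five rational points---but it has a genuine gap exactly where you yourself predict one: the equality $r=10$ is never established. Everything downstream (the genus, the elliptic-curve structure, and the free action of $\iota$ on the cusps) is conditional on a \emph{complete} enumeration of the fixed locus, boundary included, and you leave this as ``carrying out this enumeration should give $r=10$,'' to be cross-checked by machine. Completing it would require: showing every non-cuspidal fixed point has CM by $\Z[\sqrt{-2}]$ (your specialization of the identity $a=-2$ from Subsection~\ref{subsec:K-curve-constr} does give $\mu^2=-2$ at a fixed point, so this part is sound); using that $\Z[\sqrt{-2}]$ is maximal of class number one to pin down $E$ and $P_2$ uniquely and count exactly $20/2=10$ admissible $P_{11}$'s modulo $\pm1$; and, separately, proving that no \emph{cusp} is fixed, since the CM argument says nothing about the N\'eron polygons at the boundary. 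None of this is carried out in the proposal.

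The paper's proof shows how to sidestep the entire enumeration. It merely \emph{exhibits} ten fixed points explicitly (take $\Lambda=\Z+\sqrt{-2}\Z$, $E=\C/\Lambda$, $P_2=\sqrt{-2}/2$, and $P_{11}$ any multiple of $1/11\pm 4/11\sqrt{-2}$), which gives only the lower bound $r\geq 10$, hence $g'\leq 1$. It then rules out $g'=0$ arithmetically: the quotient has rational points (images of the rational cusps), so $g'=0$ would make $C$ hyperelliptic over $\Q$, and then $C$ would have infinitely many quadratic points, i.e.\ there would exist elliptic curves with a point of order $22$ over quadratic fields, contradicting the Kenku--Momose classification~(\ref{eq1}). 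Hence $g'=1$, and as a byproduct $r=10$ exactly, so no cusp is fixed and the ten rational cusps fall into five free orbits---no boundary analysis is ever needed. The final identification step is essentially the same in both arguments, except that the paper cites Cremona's tables directly; your modular-forms justification also works, with one correction: ``there is no newform of level $22$'' should read ``no newform of level $22$ with trivial nebentypus (equivalently, no rational one),'' since $S_2(\Gamma_1(22))$ does contain a newform with nontrivial nebentypus, whose four-dimensional $A_f$ the paper uses elsewhere in this same subsection.
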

\begin{proof}
Put $\Lambda=\Z + \sqrt{-2}\Z$, $E = \C/\Lambda$, and $P_2=\sqrt{-2}/2\bmod\Lambda$.
If $P_{11}$ is $1/11 \pm 4/11\sqrt{-2}\bmod\Lambda$ or any multiple thereof, then $(E, P_2, P_{11})$ is a fixed point of~$\iota$.
This gives $10$ fixed points; the Riemann-Hurwitz formula now implies $g(C/\langle\iota\rangle)\leq 1$.
If there were more fixed points, then $g(C/\langle\iota\rangle)$ would be $0$ and thus $C$ would be hyperelliptic, which contradicts the fact that there are no elliptic curves over quadratic fields with a $22$-torsion point.

The $10$ rational cusps of $C$ map down to $5$ rational points on~$C/\langle\iota\rangle$.
The modular curve $C$ has level~$22$, thus the quotient $C/\langle\iota\rangle$ is an elliptic curve of conductor dividing~$22$.
According to \cite[Table~1]{mwr} there are $3$ such elliptic curves; they all have conductor $11$ and at most $5$ rational points.
\end{proof}
\begin{remark}
The elliptic curve in question is in fact isomorphic to $\X_1(11)$, but we will not need this in the sequel.
\end{remark}

A curve $X$ that has a degree $2$ morphism to an elliptic curve is called a \emph{bi-elliptic} curve.
An involution on $X$ that gives such a morphism by dividing it out, is called a bi-elliptic involution.
The so-called Castelnuovo--Severi inequality is useful in the study of bi-elliptic curves.
\begin{proposition}[Castelnuovo--Severi inequality, {\cite[Theorem~III.10.3]{sti}}]
\label{CasSevIneq}
Let $k$ be a perfect field, and let $X$, $Y$, and $Z$ be curves over~$k$.
Let non-constant morphisms  $\pi_Y\colon X\to Y$ and $\pi_Z\colon X\to Z$ be given, and let their degrees be $m$ and~$n$, respectively.
Assume that there is no morphism $X\to X'$ of degree $>1$ through which both $\pi_Y$ and $\pi_Z$ factor.
Then the following inequality holds:
\[
g(X) \leq m\!\cdot\! g(Y) + n\!\cdot\! g(Z) + (m-1)(n-1).
\]
\end{proposition}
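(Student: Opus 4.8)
The plan is to realize $X$ inside the product surface $S=Y\times Z$ and to read off the genus bound from intersection theory on~$S$. Since the genus and all the intersection numbers involved are geometric, and since the non-factorization hypothesis can be arranged to persist after base change, I would first pass to $\bar k$ and assume $k$ algebraically closed. Consider the morphism
\[
\phi=(\pi_Y,\pi_Z)\colon X\longrightarrow S=Y\times Z,
\]
and let $X'=\phi(X)$ be its image, taken with reduced structure. The crucial observation is that $\phi$ is birational onto~$X'$: the function field $k(X')$ is the compositum $k(Y)\cdot k(Z)$ inside~$k(X)$, and if this compositum were a proper subfield, the corresponding curve would be a common factor of $\pi_Y$ and~$\pi_Z$ of degree $[k(X):k(X')]>1$, contradicting the hypothesis. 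Hence $X$ is the normalization of~$X'$, so $g(X)\le p_a(X')$, and it suffices to bound the arithmetic genus of the image curve.

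Next I would compute on~$S$ using the two ruling classes $F_Y=\{\ast\}\times Z$ and $F_Z=Y\times\{\ast\}$, which satisfy $F_Y^2=F_Z^2=0$ and $F_Y\cdot F_Z=1$. Because $\phi$ is generically injective, the projections restrict to maps $X'\to Y$ and $X'\to Z$ of degrees $m$ and~$n$, giving
\[
X'\cdot F_Y=m,\qquad X'\cdot F_Z=n.
\]
The canonical class of the product is $K_S=(2g(Y)-2)F_Y+(2g(Z)-2)F_Z$, so the adjunction formula yields
\[
2p_a(X')-2=X'^2+X'\cdot K_S=X'^2+(2g(Y)-2)m+(2g(Z)-2)n.
\]

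To control the self-intersection $X'^2$ I would invoke the Hodge index theorem. Setting $D=X'-nF_Y-mF_Z$, a direct check gives $D\cdot F_Y=D\cdot F_Z=0$, hence $D\cdot(F_Y+F_Z)=0$; since $(F_Y+F_Z)^2=2>0$, the Hodge index theorem forces $D^2\le 0$. Expanding, $D^2=X'^2-2mn$, so $X'^2\le 2mn$. Substituting into the adjunction identity and simplifying gives
\[
p_a(X')\le m\,g(Y)+n\,g(Z)+(m-1)(n-1),
\]
and combining with $g(X)\le p_a(X')$ proves the inequality.

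The main obstacle is the birationality step: one must translate the hypothesis --- the absence of a common factor $X\to X'$ of degree $>1$ --- into the statement that $\phi$ is generically one-to-one, equivalently that $k(Y)\cdot k(Z)=k(X)$. Over a perfect field one also has to be careful that ``degree~$1$'' genuinely means birational (no purely inseparable discrepancy), and that the reduction to~$\bar k$ does not create a spurious common factor. Handling this field-theoretic bookkeeping cleanly is where the real content lies, the intersection-theoretic computation being routine once $X'$ is in place.
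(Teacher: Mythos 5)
Your proof is correct, and it takes a genuinely different route from the one the paper relies on: the paper gives no argument of its own but quotes \cite[Theorem~III.10.3]{sti}, where the statement is Castelnuovo's inequality for function fields, proved entirely in function-field language (the hypothesis there being that $k(X)$ is the compositum of the subfields $k(Y)$ and $k(Z)$). Your bridging step --- translating ``no common factorization of degree $>1$'' into $k(Y)\cdot k(Z)=k(X)$, i.e.\ birationality of $(\pi_Y,\pi_Z)$ onto its image --- is exactly right, and it is precisely the translation needed to match the proposition to Stichtenoth's formulation. The geometric part is also correct: $X'\cdot F_Y=m$, $X'\cdot F_Z=n$, $K_S=(2g(Y)-2)F_Y+(2g(Z)-2)F_Z$, and $D=X'-nF_Y-mF_Z$ satisfies $D\cdot(F_Y+F_Z)=0$ with $(F_Y+F_Z)^2=2>0$, so the Hodge index theorem gives $X'^2\le 2mn$, and adjunction plus $g(X)\le p_a(X')$ yields the bound. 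One simplification on the point you flagged as delicate: you do not need the non-factorization hypothesis to ``persist'' over $\bar k$. Run the compositum argument over $k$ itself to conclude that $X\to X'$ has degree $1$, and then base-change this birational morphism; the degree is preserved, $X'_{\bar k}$ remains reduced because $k$ is perfect (so that a reduced $k$-scheme is geometrically reduced), and both $g$ and $p_a$ are stable under extension of the (perfect) base field since the curves are geometrically integral. As for what each approach buys: yours is self-contained modulo standard surface theory (adjunction and Hodge index, valid in every characteristic) and makes the geometry transparent; the cited function-field proof avoids surfaces altogether, which is why the paper can invoke it for arbitrary perfect $k$ with no further hypotheses or discussion.
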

\begin{corollary}\label{cor:gen6biell}
Let $k$ be a perfect field, and let $X$ be a bi-elliptic curve over $k$ of genus at least~$6$.
Then $X$ has a unique bi-elliptic involution~$\iota$.  Furthermore, there are no non-constant morphisms $X\to\PP^1$ of degree less than $4$, and every degree $4$ morphism $X\to\PP^1$ factors through $X\to X/\langle\iota\rangle$.
\end{corollary}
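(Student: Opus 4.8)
The plan is to derive every assertion from the Castelnuovo--Severi inequality (Proposition~\ref{CasSevIneq}), in each case pairing the morphism under consideration with a degree-$2$ map $\pi\colon X\to E$ onto an elliptic curve coming from a bi-elliptic involution. The recurring mechanism is the following: given two morphisms $\pi_Y\colon X\to Y$ and $\pi_Z\colon X\to Z$ of degrees $m$ and $n$, any morphism $X\to X'$ of degree $>1$ through which both factor must have degree dividing $\gcd(m,n)$; so for coprime degrees the hypothesis of Proposition~\ref{CasSevIneq} is automatic, and in the remaining cases the genus of $X'$ is forced by the factorizations. Since $g(X)\ge6$ is large, the inequality will be violated unless the geometry is very special, and it is exactly this rigidity that yields the conclusions.

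For uniqueness, suppose $X$ admitted two distinct bi-elliptic involutions, giving degree-$2$ maps $\pi_1\colon X\to E_1$ and $\pi_2\colon X\to E_2$ onto elliptic curves. A common factorization $X\to X'$ of degree $>1$ would have degree dividing $\gcd(2,2)=2$, hence degree exactly $2$, forcing $E_1\cong X'\cong E_2$ and making the two involutions coincide. So for distinct involutions no such factorization exists, and Proposition~\ref{CasSevIneq} gives $g(X)\le 2\cdot1+2\cdot1+(2-1)(2-1)=5$, contradicting $g(X)\ge6$. Hence the bi-elliptic involution $\iota$ is unique, and we write $\pi\colon X\to E=X/\langle\iota\rangle$ for the associated degree-$2$ map.

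For the second assertion, a degree-$1$ map would force $g(X)=0$. A degree-$2$ map $X\to\PP^1$ paired with $\pi$ admits no common factorization of degree $>1$ (such a factor would have genus both $0$ and $1$), so Proposition~\ref{CasSevIneq} gives $g(X)\le2\cdot0+2\cdot1+1=3$; a degree-$3$ map paired with $\pi$ has coprime degrees, so the hypothesis holds automatically and we get $g(X)\le3\cdot0+2\cdot1+(3-1)(2-1)=4$. Both contradict $g(X)\ge6$. Finally, let $\psi\colon X\to\PP^1$ have degree $4$, and pair it with $\pi$. If no common factorization of degree $>1$ existed, Proposition~\ref{CasSevIneq} would give $g(X)\le4\cdot0+2\cdot1+(4-1)(2-1)=5$, again impossible; so there is a common factorization $X\to X'$ of degree $>1$. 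Its degree divides $\gcd(4,2)=2$, hence equals $2$, and then $X'\to E$ has degree $1$, so $X'\cong E$ and $X\to X'$ is a degree-$2$ map onto a genus-$1$ curve. By the uniqueness just proved this map is $\pi$, and therefore $\psi$ factors through $X\to X/\langle\iota\rangle$, as claimed. The one point demanding care---the main obstacle---is the bookkeeping of the factorization hypothesis in the degree-$4$ case: one must check that the only way to escape the Castelnuovo--Severi bound is precisely the desired factorization through $\pi$, rather than through some other intermediate curve.
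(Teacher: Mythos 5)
Your proof is correct and takes essentially the same route as the paper: both arguments apply the Castelnuovo--Severi inequality (Proposition~\ref{CasSevIneq}), pairing each morphism in question with the degree-$2$ quotient map $X\to X/\langle\iota\rangle$ and using $g(X)\ge 6$ to rule out the resulting bounds of $5$, $3$, $4$, and $5$. The only difference is that you spell out the details the paper leaves implicit, namely the verification of the no-common-factorization hypothesis via divisibility of degrees, and the extraction, in the degree-$4$ case, of the forced factorization through the unique bi-elliptic quotient.
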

\begin{proof}
If there were two bi-elliptic involutions, $\iota$ and $\iota'$ say, then
$\pi_Y\colon X\to X/\langle\iota\rangle$ and $\pi_Z\colon X\to X/\langle\iota'\rangle$ would contradict Proposition~\ref{CasSevIneq}.
For the other two assertions, apply Proposition~\ref{CasSevIneq} with $\pi_Y\colon X\to X/\langle\iota\rangle$ and $\pi_Z$ any non-constant morphism $X\to\PP^1$ of degree at most~$4$.
\end{proof}
So we see that the degree~$4$ morphisms $C\to\PP^1$ are in bijection with the degree~$2$ morphisms $C/\langle\iota\rangle\to\PP^1$.

If we identify morphisms to $\PP^1$ whenever they differ by an automorphism of $\PP^1$,
then a degree $2$ morphism $C/\langle\iota\rangle\to\PP^1$ is given by a base-point-free linear system of
divisors of degree $2$ and dimension $1$ on~$C/\langle\iota\rangle$.
Since $C/\langle\iota\rangle$ is an elliptic curve, any complete linear system of divisors of degree $2$ is base-point-free and of dimension~$1$.
These are in turn in bijection with the set $\operatorname{Pic}^2(C)$ of linear equivalence classes of degree $2$ divisors on~$C/\langle\iota\rangle$.
An elliptic curve is its own Jacobian, so for any degree~$d$ the set $\operatorname{Pic}^d(C)$ is in bijection with $C/\langle\iota\rangle(\Q)$, which consists of $5$ points.

A quartic point on $C$ defines a rational point on $\Sym^4C$, and a morphism $C\to\PP^1$ defines a closed immersion $\PP^1\rightarrowtail\Sym^4C$.
So the $5$ degree $4$ morphisms $C\to\PP^1$ give us $5$ copies of $\PP^1$ in $\Sym^4C$ that are defined over~$\Q$.
We wish to prove that all rational points of $\Sym^4C$ outside these $\PP^1$'s are supported on the cusps of~$C$.

Fix any point of $C(\Q)$; this gives us a morphism
\[\phi\colon\Sym^4C\to J.\]
If $D$ is an effective divisor of degree $4$ on~$C$, then the fiber $\phi^{-1}(\phi(D))$ is
isomorphic to a projective space whose rational points form the complete linear system $|D|$ of effective
divisors that are linearly equivalent to~$D$.
 \begin{lemma}
Let $k$ be a perfect field, and let $X$ be a bi-elliptic curve over $k$ of genus at least~$6$.
Let $D$ be a divisor on $X$ of degree at most~$4$.  Then the dimension of the complete linear
system $|D|$ of divisors satisfies the following:
\[
\dim |D| =
\left\{\begin{array}{ll}
1 & \mbox{if $D$ is a fiber of a degree $4$ morphism $X\to\PP^1$};\cr
0 & \mbox{otherwise.}
\end{array}\right.
\]
In the former case, $|D|$ consists of all fibers of the same degree $4$ morphism $X\to\PP^1$.
\end{lemma}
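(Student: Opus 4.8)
The plan is to prove the statement by leveraging the Castelnuovo--Severi inequality together with the Riemann--Roch theorem, treating the two cases of the claimed dichotomy in turn. Let $X$ be a bi-elliptic curve over $k$ of genus $g\ge6$, with bi-elliptic involution $\iota$ (unique by Corollary~\ref{cor:gen6biell}), and let $D$ be a divisor of degree at most $4$. Since $\dim|D|$ can only increase under base change, I may work over $\bar k$; and replacing $D$ by a linearly equivalent effective divisor (if $|D|$ is nonempty; otherwise $\dim|D|=-1$, a case I would handle separately or fold into ``$\dim|D|=0$'' by noting the claim is about effective $D$), I assume $D$ is effective of degree $d\le4$.

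First I would dispose of low degrees. By Corollary~\ref{cor:gen6biell} there are no non-constant morphisms $X\to\PP^1$ of degree less than $4$, so no effective divisor of degree $\le3$ can move in a pencil; hence $\dim|D|=0$ whenever $0\le\deg D\le3$ and $|D|\ne\emptyset$. This already settles the ``otherwise'' branch for all degrees up to $3$. The substance is therefore concentrated in the case $\deg D=4$.

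For $\deg D=4$, I would argue that if $\dim|D|\ge1$ then the linear system $|D|$ (or a base-point-free subsystem of it) defines a non-constant morphism $X\to\PP^1$ whose degree is $4$ minus the degree of the fixed part. If $|D|$ has a base point, removing it yields a base-point-free pencil of degree $\le3$, contradicting Corollary~\ref{cor:gen6biell}; so $|D|$ is base-point-free of degree exactly $4$ and gives a degree $4$ morphism $\varphi\colon X\to\PP^1$, with $D$ one of its fibers. By Corollary~\ref{cor:gen6biell} this $\varphi$ factors through $X\to X/\langle\iota\rangle$; combined with the fact (used in the surrounding text) that degree $4$ morphisms $X\to\PP^1$ correspond to degree $2$ morphisms from the elliptic quotient $X/\langle\iota\rangle$, which are complete and $1$-dimensional, I conclude $\dim|D|=1$ exactly and that $|D|$ is precisely the set of fibers of $\varphi$. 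Conversely, if $D$ is a fiber of a degree $4$ morphism $X\to\PP^1$, then pulling back $\mathcal{O}_{\PP^1}(1)$ shows $\dim|D|\ge1$, and the factorization through the elliptic quotient forces $\dim|D|=1$ with $|D|$ consisting of the fibers. This proves both the dimension formula and the final assertion in the moving case.

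The main obstacle I anticipate is ruling out the possibility that an effective degree $4$ divisor moves in a pencil that is \emph{not} the pullback of a $g^1_2$ from the elliptic quotient --- in principle a $g^1_4$ on $X$ need not a priori factor through $X/\langle\iota\rangle$ without invoking Castelnuovo--Severi. This is exactly where the genus hypothesis $g\ge6$ is essential: the inequality $g(X)\le m\cdot g(Y)+n\cdot g(Z)+(m-1)(n-1)$ applied to $\pi_Y\colon X\to X/\langle\iota\rangle$ (degree $2$, target genus $1$) and $\pi_Z=\varphi$ (degree $4$, target genus $0$) gives $g\le 2+0+3=5$ whenever $\varphi$ does \emph{not} factor through $\pi_Y$, contradicting $g\ge6$. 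Thus Corollary~\ref{cor:gen6biell} already packages this obstruction, and the remaining work is the bookkeeping with base points and Riemann--Roch to pin down the exact value of $\dim|D|$ rather than just a bound.
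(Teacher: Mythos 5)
Your proposal follows the same skeleton as the paper's proof---everything is reduced to Corollary~\ref{cor:gen6biell}, degrees $\le 3$ are disposed of immediately, base points are removed to force $\deg D=4$, and a moving degree-$4$ divisor is identified with a fiber---but it has a genuine gap at the one step that is the actual content of the lemma: the upper bound $\dim|D|\le 1$. As written, your argument is circular. You say that the base-point-free system $|D|$ ``gives a degree $4$ morphism $\varphi\colon X\to\PP^1$,'' but a base-point-free system $|D|$ defines a morphism to $\PP^{\dim|D|}$; it maps to $\PP^1$ only if $\dim|D|=1$, which is exactly what is to be proved. The fact you then invoke---that degree-$4$ morphisms $X\to\PP^1$ correspond to degree-$2$ morphisms from the elliptic quotient, whose degree-$2$ linear systems are complete and $1$-dimensional---is a statement about \emph{pencils}, i.e.\ about morphisms, not about the complete system $|D|$; it does not by itself exclude the possibility that $|D|$ is $2$-dimensional and merely \emph{contains} many such pencils. (A smaller wording issue of the same kind: removing a base point from $|D|$ yields a system of the \emph{same} dimension and smaller degree, which is a ``pencil'' only when $\dim|D|=1$; one should pass to a sub-pencil first, as the paper does.)

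The paper closes this gap with a one-line argument that your proposal is missing: by the first step, applied over any algebraic extension of $k$ (over which the hypotheses of Corollary~\ref{cor:gen6biell} still hold), every linear system of degree $<4$ has dimension $0$; hence if $\dim|D|\ge 2$, imposing passage through a point would produce a positive-dimensional system of degree $3$, a contradiction. Alternatively, your route through the quotient $\pi\colon X\to X/\langle\iota\rangle$ can be completed, but it needs an actual argument, for instance: two distinct sub-pencils of $|D|$ containing $D$ would give two degree-$4$ morphisms with common fiber $D$, both factoring as $h_i\circ\pi$; writing $D=\pi^*D'$, both $h_1$ and $h_2$ are the morphism attached to the \emph{same} complete degree-$2$ system $|D'|$ on the elliptic curve, hence coincide up to automorphism of $\PP^1$, so $|D|$ has a unique sub-pencil and $\dim|D|=1$. (Equivalently, one can check completeness of the pulled-back pencil via $\pi_*\mathcal{O}_X=\mathcal{O}_{X/\langle\iota\rangle}\oplus M^{-1}$ with $\deg M=g-1\ge 5$, so that $h^0(X,\pi^*D')=h^0(X/\langle\iota\rangle,D')=2$.) Your closing sentence defers precisely this ``bookkeeping,'' but it is not bookkeeping: without one of these arguments the exact value $\dim|D|=1$, and with it the final assertion of the lemma, is asserted rather than proved.
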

\begin{proof}
Assume $|D|$ has positive dimension.  Let $Y$ be any subspace of $|D|$ of dimension~$1$,
and let $F\leq D$ be the fixed divisor of~$Y$.
Subtracting $F$ from all elements of~$Y$, we obtain a base-point-free linear system of dimension $1$ and degree $\deg(D-F)$ and thus a morphism $X\to\PP^1$ of degree~$\deg(D-F)$.
By Corollary~\ref{cor:gen6biell} we have $\deg D=4$ and~$F=0$.
Since linear systems of degree less than $4$ over any algebraic extension of $k$ have dimension~$0$, it follows that $\dim |D|$ cannot exceed~$1$.
We thus have $Y=|D|$, and the last assertion is immediate.
\end{proof}
This lemma immediately implies that the five $\PP^1$'s described above are fibers
of $\phi$ and furthermore that outside these $\PP^1$'s the rational points of $\Sym^4C$ map injectively into~$J(\Q)$.  It is thus interesting to know what $J(\Q)$ looks like.
\begin{lemma}
The Mordell--Weil rank of $J$ is zero.
\end{lemma}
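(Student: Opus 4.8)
The Mordell–Weil rank of $J = \J_1(22)$ is zero.

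The plan is to show that $J(\Q)$ is finite, which for an abelian variety over a number field is equivalent to the rank being zero. The most direct route, mirroring the strategy used repeatedly in the preceding lemmas (Lemmas~\ref{lemma:description-13}, \ref{lemma:description-16}, \ref{lemma:description-18}), is to bound the Mordell--Weil group via descent and reduction, but here the genus is $6$ so a direct $2$-descent on the full Jacobian is delicate. A cleaner approach exploits the structure we have already uncovered: the quotient $C/\langle\iota\rangle$ is an elliptic curve of conductor~$11$ and rank~$0$ (Lemma~\ref{Cmodiota} together with the cited tables). The involution $\iota$ induces an isogeny decomposition of $J$ up to isogeny into the $(+1)$- and $(-1)$-eigenparts: the $(+1)$-part is (isogenous to) the Jacobian of $C/\langle\iota\rangle$, already known to have rank~$0$. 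It therefore suffices to prove that the Prym variety $P = \ker(J \to \J(C/\langle\iota\rangle))^0$, which carries the $\iota = -1$ action, has rank~$0$.

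First I would make the eigenspace decomposition precise: since $\iota$ acts on $J$ with $J \sim \J(C/\iota) \times P$ (isogeny over~$\Q$), the rank of $J$ is the sum of the ranks of these two factors, and the first factor contributes~$0$. The bulk of the work is to control~$P$. For this I would use that $C = \X_1(22)$ is a modular curve, so $J$ is (isogenous to) a product of modular abelian varieties $A_f$ attached to newforms of weight~$2$ and level dividing~$22$. Consulting the decomposition of $\J_1(22)$ into simple factors, each factor is an abelian variety attached to a $\Gamma_1(22)$-newform, and the analytic rank of each such $A_f$ over~$\Q$ can be read off from the non-vanishing of the relevant $L$-functions $L(f,1)$. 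By the theorem of Kolyvagin--Logachëv for modular abelian varieties (the analogue of Kolyvagin's result invoked earlier for elliptic curves in the proof of Theorem~\ref{t3}), non-vanishing of $L(f,1)$ for every newform factor forces the algebraic rank of each $A_f$, and hence of $J$, to be~$0$.

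The concrete execution is then a finite computation: list the Galois-conjugacy classes of newforms of weight~$2$ for $\Gamma_1(22)$, verify that for each the central $L$-value is nonzero (numerically to sufficient precision, which rigorously certifies non-vanishing since the sign of the functional equation is known and the $L$-value is a specific period multiple), and invoke Kolyvagin--Logachëv. The main obstacle I anticipate is not conceptual but one of ensuring that \emph{every} simple factor has nonvanishing central $L$-value and that each factor genuinely falls under the hypotheses of the Kolyvagin--Logachëv theorem (i.e.\ is a quotient of $\J_1(22)$ attached to a newform, which it is by construction). An alternative, entirely self-contained route — and perhaps the one best matching the style of the surrounding lemmas — would be to perform a descent directly: carry out a $p$-descent (for a small prime, using the known rational torsion and a map to the rank-$0$ elliptic quotient to reduce the effective dimension) and combine it with injectivity of reduction modulo a prime of good reduction, exactly as in Lemma~\ref{lemma:description-18}, to pin $J(\Q)$ down as a finite group. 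Either way, once finiteness of $J(\Q)$ is established the rank is~$0$ and the statement follows.
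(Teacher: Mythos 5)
Your overall architecture coincides with the paper's: decompose $J$ up to isogeny into modular abelian varieties $A_f$ attached to weight-2 newforms of level dividing $22$, note that the level-$11$ factor is (a power of) the rank-$0$ elliptic curve $\J_1(11)$, and dispose of the unique level-$22$ newform by verifying $L(f,1)\neq 0$ symbolically/numerically and invoking a theorem that converts analytic rank $0$ into algebraic rank $0$. Your preliminary $\iota$-eigenspace splitting is correct but buys nothing: the Prym variety is $5$-dimensional and, up to isogeny, still contains a level-$11$ factor together with the \emph{entire} level-$22$ factor, so the modular decomposition ends up doing all the work anyway, exactly as in the paper.

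The genuine gap is in the black box you invoke. The Kolyvagin--Logach\"ev theorem is proved for quotients of $\J_0(N)$ attached to newforms for $\Gamma_0(N)$, i.e.\ with trivial nebentypus. The unique newform $f$ of level $22$ has nontrivial nebentypus of order $5$: indeed $S_2(\Gamma_0(22))$ is entirely old (it is $2$-dimensional and comes from level $11$, so $S_2(\Gamma_0(22))^{\mathrm{new}}=0$), and $A_f$ is the $4$-dimensional new quotient of $\J_1(22)$, which is \emph{not} a quotient of $\J_0(22)$. So Kolyvagin--Logach\"ev does not apply to the one factor that actually needs it---this is precisely the hypothesis-check you flagged as the anticipated obstacle, and it fails. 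The paper instead cites Kato's theorem \cite[Corollary~14.3]{kato}, the $\Gamma_1(N)$ analogue obtained from his Euler system, which does cover newforms with nontrivial character; with that substitution your argument goes through and is the paper's proof. (Your fallback suggestion of a direct $2$- or $p$-descent on the genus-$6$ Jacobian is not what the paper does and would be substantially harder in practice; the non-vanishing check itself is unproblematic, since $L(f,1)/\Omega_f$ is computable exactly via modular symbols and its non-vanishing propagates to all Galois conjugates of~$f$.)
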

\begin{proof}
Each isogeny factor of $J$ is a modular Abelian variety $A_f$, where is a newform of $S_2(\Gamma_1(N))$ with~$N\mid 22$.  We must prove that these $A_f$ all have Mordell--Weil rank~0.
There are two such $A_f$: one for the unique newform of level $11$ and one for the unique newform of level~$22$.
For $f$ of level $11$ we have $A_f= \J_1(11)$, which is an elliptic curve with $5$ rational points.
For $f$ of level~$22$, proven instances of the Birch--Swinnerton-Dyer Conjecture
\cite[Corollary~14.3]{kato} ensure us that $\operatorname{rk}A_f(\Q)=0$ if $L(f, 1)\not=0$.
Symbolic methods involving modular symbols can be used to verify $L(f, 1)\not=0$ (see for instance \cite[Section~3.10]{stein-thesis}); it turns out that this is indeed the case here.
\end{proof}
To further study the Diophantine properties of $\Sym^4C$ and $J$, we will use reduction modulo~ $3$; this will enable us to prove Proposition~\ref{prop:quarticfiber}.
In general, if $A$ is an Abelian variety over a number field $K$ and $\mathfrak{p}\mid p$ is a prime of good reduction with $e(\mathfrak{p}/p) < p-1$, then reduction modulo $\mathfrak{p}$ is injective on the torsion of $A(K_\mathfrak{p})$; see for instance \cite[Appendix]{katz}.
For us this means that $J(\Q)$ injects into~$J(\F_3)$.
\begin{proof}[Proof of Proposition \ref{prop:quarticfiber}]
We can compute the zeta function of $C_{\F_3}$, either by direct point counting over extensions of $\F_3$ or by expressing the Frobenius action on the Tate module in terms of the Hecke operator~$T_3$, and find
\[
Z(C_{\F_3}, t) = \frac{P(t)}{(1-t)(1-3t)}
\]
with
\[
P(t) = (1+t+3t^{2})^2\cdot(1+4t+3t^{2}-10t^{3}-29t^{4}-30t^{5}+27t^{6}+108t^{7}+81t^{8}).
\]
The $10$ rational cusps of $C$ reduce to distinct points of~$C(\F_3)$.
If we expand $Z(C_{\F_3}, t)$ as a power series, then the coefficient of $t^d$ is equal to the number of effective divisors of degree $d$ on~$C_{\F_3}$.  So from
\[
Z(C_{\F_3}, t) = 1+10t+55t^{2}+220t^{3}+720t^{4} + O(t^5)
\]
we can immediately read off that all points of $C(\F_3)$ are cusps.
The number of unordered $n$-tuples of cusps is $\binom{10 + n -1}{n}$, which is equal to $55$, $220$, and $715$ for $n=2, 3, 4$, respectively.
It follows that all divisors of degree $2$ and $3$ are supported on the cusps and that there are precisely $5$ divisors of degree $4$ that are not supported on the cusps.

We will now show that these $5$ points of $\Sym^4(C)(\F_3)$ are in the non-trivial fibers of $\Sym^4(C)\to J$.
This would immediately imply that all points of $\Sym^4(C)(\Q)$ outside these fibers are cuspidal, because of the injectivity of $\Sym^4(\Q)\to J(\F_3)$ outside these fibers.
To do this, we can simply count the number of non-cuspidal points in the non-trivial fibers over~$\F_3$.
Let a morphism $C\to\PP^1$ of degree $4$ be given.
The proof of Lemma~\ref{Cmodiota} implies that the $10$ cusps of~$C$ are mapped to $\PP^1$ in fibers of $4$, $4$, and $2$ points, respectively.  Hence, in each of the $5$ rational projective lines that we have in $\Sym^4(C)$, there are precisely $3$ points supported on the cusps.
Over $\F_3$ these lines have $\#\PP^1(\F_3)=4$ rational points, thus each of the $5$ lines has exactly $1$ non-cuspidal point, giving us $5$ points in total.
\end{proof}

\begin{corollary}
Each point on $C$ with quartic field of definition maps to a point of $C/\langle\iota\rangle$ that is defined over a quadratic field.\qed
\end{corollary}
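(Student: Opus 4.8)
The plan is to deduce the statement directly from Proposition~\ref{prop:quarticfiber} together with the factorization of degree~$4$ morphisms furnished by Corollary~\ref{cor:gen6biell}, by chasing a quartic point through the quotient map $\pi\colon C\to C/\langle\iota\rangle$.

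First I would observe that a point $P$ of $C$ with quartic field of definition $L$ is necessarily non-cuspidal: the cusps of $C$ have residue fields of degree~$1$ (the ten rational cusps) or degree~$5$ (the ten cusps over $\Q(\zeta_{11}+\zeta_{11}^{-1})$), never degree~$4$. Hence Proposition~\ref{prop:quarticfiber} applies and supplies a degree~$4$ morphism $f\colon C\to\PP^1$, defined over~$\Q$, together with a rational point $f(P)\in\PP^1(\Q)$ in whose fiber $P$ lies. By Corollary~\ref{cor:gen6biell} this $f$ factors as $f=\psi\circ\pi$, where $\pi\colon C\to C/\langle\iota\rangle$ is the bi-elliptic quotient and $\psi\colon C/\langle\iota\rangle\to\PP^1$ has degree~$2$; since $f$, $\pi$, and the quotient $C/\langle\iota\rangle$ are all defined over~$\Q$, so is~$\psi$.

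The image $\pi(P)$ then lies in the fiber $\psi^{-1}(f(P))$. As $\psi$ and $f(P)$ are defined over~$\Q$, this fiber is an effective divisor of degree~$2$ on the elliptic curve $C/\langle\iota\rangle$ defined over~$\Q$, so the residue field $\kappa(\pi(P))$ has degree at most~$2$ over~$\Q$. For the matching lower bound I would use that $\pi$ has degree~$2$: the inclusion $\kappa(\pi(P))\subseteq L$ of residue fields satisfies $[L:\kappa(\pi(P))]\le 2$, and since $[L:\Q]=4$ this forces $[\kappa(\pi(P)):\Q]\ge 2$. Combining the two bounds gives $[\kappa(\pi(P)):\Q]=2$, so $\pi(P)$ is defined over a quadratic field.

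I do not expect any genuine difficulty, as all the substantive work is already contained in Proposition~\ref{prop:quarticfiber} and Corollary~\ref{cor:gen6biell}. The only point meriting care is the descent bookkeeping---one must check that $\psi^{-1}(f(P))$ really descends to a degree-$2$ divisor over~$\Q$, which holds precisely because both $\psi$ and $f(P)$ are $\Q$-rational---so that the quartic residue field $L$ of $P$ is realized as a quadratic extension of the quadratic field $\kappa(\pi(P))$ via~$\pi$.
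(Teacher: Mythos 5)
Your proof is correct and is exactly the argument the paper intends: the corollary is stated with a \qed precisely because it follows immediately from Proposition~\ref{prop:quarticfiber} together with the factorization of degree~$4$ morphisms through $C\to C/\langle\iota\rangle$ given by Corollary~\ref{cor:gen6biell}, which is the chain you spell out (and the same reasoning the paper repeats in its proof of Theorem~\ref{theorem:false-CM}(\ref{tm22})). Your added bookkeeping---cusps have degree $1$ or $5$ so quartic points are non-cuspidal, and the degree count $[L:\kappa(\pi(P))]\le 2$ forcing the residue field to be exactly quadratic---is a careful and correct filling-in of details the paper leaves implicit.
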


\begin{proof}[Proof of Theorem \ref{theorem:false-CM}(\ref{tm22})]
Let $(E, P_2, P_{11})$ be the universal elliptic curve with points of order $2$ and~$11$ over~$\Y_1(22)$.
The construction in Subsection \ref{subsec:K-curve-constr} gives us an isogeny
$\mu\colon\iota^*(E, P_2, P_{11})\to (E, 0, 8P_{11})$ of degree~$2$.
From this we obtain an isomorphism
\[
\mu\circ\iota^*\mu\colon (E, P_{11}) \isom (E, 8^2P_{11}) = (E, -2P_{11}).
\]
Let $P$ be a point of $\Y_1(22)$ defined over a quartic number field~$L$.
From the above it follows that there is a degree $4$ morphism $C\to\PP^1$ mapping $P$ to a rational point and thus that $P$ lies above a point of $C/\langle\iota\rangle$ defined over a quadratic number field $K$ that is necessarily a subfield of $L$.
The results from Subsection \ref{subsec:K-curve-constr} now immediately imply that the elliptic curve $E$ associated with $P$ is a $K$-curve with false complex multiplication by $\Q(\sqrt{-2})$.
\end{proof}

\section{Applications to elliptic curves over finite fields}
\label{sec:aff}

Finding elliptic curves with positive rank and large torsion over number fields is not just a curiosity. As mentioned in the introduction, elliptic curves with large torsion and positive rank over the rationals have long been used for factorization, starting with Montgomery~\cite{mon}, Atkin and Morain~\cite{am}. In this section we argue that examining the torsion of an elliptic curve over number fields of small degree is beneficial in addition to examining the rational torsion.

A nice explicit example of the factorization of large numbers (Cunningham numbers in this case) using elliptic curves over number fields of small degree can be found in~\cite{bc}. The authors used elliptic curves over cyclotomic fields with torsion groups $\Z/3\Z \oplus \Z/6\Z$ and $\Z/4\Z \oplus \Z/4\Z$. Also, they tried to construct elliptic curves over cyclotomic fields with torsion $\Z/5\Z \oplus \Z/5\Z$ and $\Z/4\Z \oplus \Z/8\Z$ and positive rank (see \cite[4.4 and~4.5]{bc}), but failed. Note that one can find such curves in Theorem~\ref{t8}.

\begin{tm}
\label{th:reduction}
Let $m$ and $n$ be positive integers such that $m$ divides~$n$.  Let $E$ be an elliptic curve over~$\Q$, let $p$ be a prime number not dividing $n$ such that $E$ has good reduction at~$p$, and let $d$ be a positive integer.  Suppose there exists a number field~$K$ such that $E(K)$ contains a subgroup isomorphic to $\Z/m\Z\oplus\Z/n\Z$ and such that $K$ has a prime of residue characteristic~$p$ and inertia degree dividing~$d$.  Then $E(\F_{p^d})$ contains a subgroup isomorphic to $\Z/m\Z\oplus\Z/n\Z$.
\end{tm}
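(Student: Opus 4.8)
The plan is to reduce everything to the standard fact that, at a prime of good reduction, the reduction map is injective on prime-to-$p$ torsion. The two observations that make this work are that the subgroup in question has order prime to $p$, and that the residue field attached to the chosen prime of $K$ embeds into $\F_{p^d}$.

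First I would fix a prime $\mathfrak{p}$ of $K$ lying over $p$ whose inertia degree $f=f(\mathfrak{p}/p)$ divides $d$; such a prime exists by hypothesis. The residue field $k_\mathfrak{p}$ is then $\F_{p^f}$, and since $f\mid d$ we have a field inclusion $\F_{p^f}\hookrightarrow\F_{p^d}$. Because $E$ is defined over $\Q$ and has good reduction at $p$, its base change $E_K$ has good reduction at $\mathfrak{p}$, and the special fiber is the base change to $k_\mathfrak{p}$ of the reduction $\bar E$ of $E$ modulo $p$. In particular the reduced curve over $k_\mathfrak{p}$ is $\bar E\times_{\F_p}k_\mathfrak{p}$, so its group of $k_\mathfrak{p}$-points is $\bar E(\F_{p^f})$, which sits inside $\bar E(\F_{p^d})=E(\F_{p^d})$.

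Next I would pass to the local field $K_\mathfrak{p}$ and consider the subgroup $T\simeq\Z/m\Z\oplus\Z/n\Z$ of $E(K)\subseteq E(K_\mathfrak{p})$. Since $m\mid n$, every prime dividing $|T|=mn$ divides $n$; as $p\nmid n$ by hypothesis, we get $p\nmid|T|$, so $T$ consists of prime-to-$p$ torsion points. The kernel of the reduction map $E(K_\mathfrak{p})\to\bar E(k_\mathfrak{p})$ is the group of points reducing to the identity, which is isomorphic to the formal group of $E$ over the maximal ideal and hence a pro-$p$ group; it therefore contains no nontrivial prime-to-$p$ torsion. Consequently reduction is injective on $T$, and $T$ maps isomorphically onto a subgroup of $\bar E(\F_{p^f})\subseteq\bar E(\F_{p^d})$, which is exactly the asserted subgroup of $E(\F_{p^d})$.

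There is no serious obstacle here; once the correct injectivity statement is invoked the argument is essentially bookkeeping. The one point that needs care is the choice of that statement: the general version for torsion of abelian varieties used elsewhere in this paper (via \cite[Appendix]{katz}) carries the ramification bound $e(\mathfrak{p}/p)<p-1$, but for an elliptic curve with good reduction the reduction map is injective on the \emph{entire} prime-to-$p$ torsion with no hypothesis on ramification, precisely because its kernel is pro-$p$. Since $T$ has order prime to $p$, it is this unconditional version that applies, so no assumption on the ramification of $\mathfrak{p}$ over $p$ is needed. I would also make explicit at the outset that $E(\F_{p^d})$ in the statement denotes the group of points of the reduction $\bar E$, so that the final step $\bar E(\F_{p^f})\subseteq\bar E(\F_{p^d})$ is literally the passage from points over a subfield to points over the larger field.
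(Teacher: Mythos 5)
Your proof is correct and follows essentially the same route as the paper, which simply invokes the standard fact (Silverman, VII, Proposition 3.1) that prime-to-$p$ torsion of $E(K)$ injects under reduction at a prime of good reduction; your formal-group argument for the pro-$p$ kernel is exactly the content of that cited proposition, and your bookkeeping (choice of $\mathfrak{p}$, the inclusion $\F_{p^f}\hookrightarrow\F_{p^d}$, and the observation that $p\nmid mn$ because $m\mid n$ and $p\nmid n$) just makes explicit what the paper leaves implicit. Your remark that no ramification bound is needed is also correct and consistent with the paper's choice of reference here rather than the Katz-type statement it uses elsewhere.
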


\begin{proof}
This follows from the fact that the $n$-torsion of~$E(K)$ reduces injectively modulo any prime of good reduction that does not divide $n$; see for example \cite[VII, Proposition 3.1]{sil}.
\end{proof}

We can apply Theorem~\ref{th:reduction} with a fixed number field~$K$, such as the splitting field of~$E[n]$. Then Chebotarev's density theorem gives a lower bound for the density of the set of primes~$p$ such that $E(\F_p)$ contains a subgroup isomorphic to $\Z/m\Z\oplus\Z/n\Z$.

The relevance for the elliptic curve factoring method is as follows. One looks for elliptic curves over fields of small degree having a given torsion subgroup~$G$. If $E$ is such a curve, then $E(\F_p)$ contains a subgroup isomorphic to~$G$ for a large density of primes~$p$. We say that an integer $m$ is \emph{$n$-smooth} for some fixed value of $n$ if all the prime divisors of $m$ are less than or equal to~$n$. As mentioned in the introduction, for the elliptic curve factoring method, one wants to choose elliptic curves~$E$ such that $|E(\F_p)|$ is smooth for many $p$.

The standard heuristic is that the larger the torsion subgroup $T$ of~$E(\Q)$, the greater the probability that $|E(\F_p)|$ is smooth. This is because $T$ injects into $E(\F_p)$ for all primes $p$ of good reduction that do not divide $|T|$, making $|E(\F_p)|$ divisible by $|T|$. However, this heuristic is too simplistic, as a curve with smaller $E(\Q)_\tors$ can have much larger torsion over fields of small degree, giving altogether a greater probability of $|E(\F_p)|$ to be smooth. We give an example of this phenomenon.

\begin{example}
\label{example1}
One can use \cite[Theorem~4.14]{jk4} (using $t=3$) to obtain an elliptic curve over $\Q$ with torsion $\Z/6\Z \oplus \Z/6\Z$ over the field $K=\Q(\sqrt{-3},\sqrt{217})$ and torsion $\Z/6\Z$ over~$\Q$. The curve is:
$$E_1:y^2 = x^3 - 17811145/19683x - 81827811574/14348907.$$
For example, 61, 67, and 73 are primes of good reduction that completely split in~$K$, so the complete torsion group of $E(K)$ injects into the finite fields with 61, 67, and 73 elements. One easily checks that the curve has 72 points over all the fields and that the groups are isomorphic to $\Z/6\Z \oplus \Z/12\Z$.
Now take
$$E_2:y^2 = x^3 - 25081083x + 44503996374.$$
The torsion of $E_2(\Q)$ is isomorphic to $\Z/7\Z$, implying that by standard heuristics (examining only the rational torsion), $|E_2(\F_p)|$ should be more often smooth than~$|E_1(\F_p)|$.  Note that both curves have rank 1 over~$\Q$, so the rank should not play a role.

We examine how often $|E_1(\F_p)|$ and $|E_2(\F_p)|$ are 100-smooth and 200-smooth if $p$ runs through the first 1000, 10000, and 100000 primes, excluding the first ten primes to get rid of the primes of bad reduction. For comparison, we also take the elliptic curve
$$E_3:y^2=x^3+3,$$
with trivial torsion group and rank 1.
In the following table, $p_n$ denotes the $n$-th prime number.

\begin{center}
\begin{tabular}{cccc}
\toprule
 & $ 30<p<p_{1010}$ & $30<p<p_{10010}$ & $30<p<p_{100010}$\\
\midrule
$\#$100-smooth $|E_1(\F_p)|$& 812 & 4843 & 22872\\
$\#$100-smooth $|E_2(\F_p)|$& 768 & 4302 & 20379\\
$\#$100-smooth $|E_3(\F_p)|$& 553 & 2851 & 12344\\
$\#$200-smooth $|E_1(\F_p)|$& 903 & 6216 & 35036\\
$\#$200-smooth $|E_2(\F_p)|$& 877 & 5690 & 32000\\
$\#$200-smooth $|E_3(\F_p)|$& 699 & 4134 & 21221\\
\bottomrule
\end{tabular}
\end{center}

We see that, contrary to what one would expect if examining only the rational torsion, $E_1$ is consistently more likely to be smooth than~$E_2$. Why does this happen? Examine the behavior of the torsion of $E_1(K)$ and $E_2(K)$ as $K$ varies through all quadratic fields. The torsion of $E_2(K)$ will always be $\Z/7\Z$ (see \cite[Theorem~2]{fuj}), while $E_1(\Q(\sqrt{-3}))_\tors\simeq \Z/3\Z \oplus \Z/6\Z$ and $E_1(\Q(\sqrt{217}))_\tors\simeq \Z/2\Z \oplus \Z/6\Z$. One fourth of the primes will split in $\Q(\sqrt{-3})$ and not in $\Q(\sqrt{217})$, one fourth vice versa, one fourth will split in neither field, and one fourth will split in both fields (and thus splitting completely in $\Q(\sqrt{-3},\sqrt{217}$)). This implies that $|E_1(\F_p)|$ is divisible by $6$, $12$, $18$, and $36$, each for one fourth of the primes, while all we can say for $|E_2(\F_p)|$ is that it is divisible by~7. We also see that $|E_3(\F_p)|$ is much less likely to be smooth than both $E_1$ and~$E_2$.

Note that these curves are by no means special; a similar result will be obtained if one chooses three other elliptic curves defined over $\Q$ of the same type, one with torsion $\Z/6\Z \oplus \Z/6\Z$ over a quartic field, one with torsion $\Z/7\Z$ over $\Q$, and one with trivial torsion over the rationals.
\end{example}

\section{Examples of curves with prescribed torsion}

The tables are arranged as follows. In the first column we give a pair $(m,n)$, meaning that the given elliptic curve has torsion isomorphic to $\Z/m \Z \oplus \Z / n\Z$. In Table~\ref{table:rank0quadratic}, the second column contains a squarefree integer $d$ indicating the base field $\Q(\sqrt d)$. In all other tables, the second column contains an irreducible polynomial~$f\in\Q[x]$ defining the base field, and $w$ denotes a root of~$f$. The curve is given in the third column, either by a quintuple $(a_1,a_2,a_3,a_4,a_6)$ representing a curve in long Weierstrass form or by a pair $(a,b)$ representing a curve in Tate form $y^2+axy+by=x^3+bx^2$.

\begin{table}[b]
\caption{Curves with prescribed torsion and rank 0 over quadratic fields.}
\label{table:rank0quadratic}
\begin{tabular}{lrl}
\toprule
$(m,n)$ & $d$ & Curve\\
\midrule
$(1,1)$ & $-1$ & $(0,0,0,0,6)$\\
$(1,2)$ &  $5$ & $(0,0,0,1,0)$\\
$(1,3)$ & $-2$ & $(0,0,0,0,4)$\\
$(1,4)$ & $-2$ & $(1,\frac18)$\\
$(1,5)$ & $-1$ & $(-2,-3)$\\
$(1,6)$ & $-1$ & $(-2,-12)$\\
$(1,7)$ & $-1$ & $(-1,2)$\\
$(1,8)$ & $-2$ & $(7,-6)$\\
$(1,9)$ & $-2$ & $(3,6)$\\
$(1,10)$ & $-2$ & $(-5,-24)$\\
$(1,11)$ & $2$ & $(\sqrt 2 +1, -\sqrt 2 +2)$\\
$(1,12)$ & $-3$ & $(43,-210)$\\
$(1,13)$ & $17$ & $(2\sqrt{17}-9, 18\sqrt{17}-74)$\\
$(1,14)$ & $-7$ & $(\sqrt{-7}+2, \sqrt{-7}+5)$\\
$(1,15)$ & $5$ & $(3,2)$\\
$(1,16)$ & $70$ & $(-\frac{31}{5},-\frac{18}{25})$\\
$(1,18)$ & $33$ & $(6+\sqrt{33},-5-\sqrt{33})$ \\
$(2,2)$ & $-1$ & $(0,0,0,1,0)$\\
$(2,4)$ & $-3$ & $(1,\frac{1}{18})$\\
$(2,6)$ & $-3$ & $(\frac{11}{10},\frac{9}{100})$\\
$(2,8)$ & $-3$ & $(-\frac{23}{7}, -\frac{30}{49})$\\
$(2,10)$ & $-2$ & $(-\frac{7}{2}, -\frac{9}{2})$ \\
$(2,12)$ & $6$ & $(\frac{29}{27}, \frac{50}{729})$\\
$(3,3)$ & $-3$ & $(1,-1,0,12,8)$\\
$(3,6)$ & $-3$ & $(\frac{9}{8}, \frac{7}{64})$\\
$(4,4)$ & $-1$ & $(1,\frac{15}{256})$\\
\bottomrule
\end{tabular}
\end{table}

\begin{table}
\caption{Curves with prescribed torsion and rank 0 over cubic fields.}
\label{table:rank0cubic}
\begin{tabular}{lll}
\toprule
$(m,n)$ & $f$ & Curve\\
\midrule
$(1,1)$ & $x^3+x^2+2$ & $(0,0,0,0,-3)$ \\
$(1,2)$ & $x^3+x^2+10$ & $(0,0,0,1,0)$\\
$(1,3)$ & $x^3+x^2+x-1$ & $(0,0,0,0,4)$\\
$(1,4)$ & $x^3+x^2-1$ & $(1,\frac{1}{2})$ \\
$(1,5)$ & $x^3+x+$1 & $(-2,-3)$\\
$(1,6)$ & $x^3+2$ & $(\frac{4}{3},\frac{2}{9})$\\
$(1,7)$ & $x^3+x+1$ & $(-1,-4)$\\
$(1,8)$ & $x^3+2x^2+1$ & $(-\frac{1}{2},-3)$\\
$(1,9)$ & $x^3+2x^2+1$ & $(-3,-12)$\\
$(1,10)$ & $x^3+x^2+3$ & $(-5,-24)$\\
$(1,11)$ & $x^3 - x^2 - 2$ & $(-2w^2 + 2w + 3, 2w^2 - 2w - 2)$\\
$(1,12)$ & $x^3+2$ & $(43,-210)$ \\
$(1,13)$ & $x^3 - x - 2$ & $(-w^2 - w - 1,-w^2 + w + 2)$\\
$(1,14)$ & $x^3 - x - 2$ & $(-1,-4)$ \\
$(1,15)$ & $x^3 + 2x - 1$ & $\bigl(\frac{w^2 - 2w + 3}{2}, \frac{-2w^2 - w + 1}{2}\bigr)$\\
$(1,16)$ & $x^3 - x^2 + 2x + 8$ & $(-4w-5,-7w^2-3w+10)$\\
$(1,18)$ & $x^3 + 3x - 2$ & $(-3,-12)$ \\
$(1,20)$ & $x^3 - x^2 - 2x - 2$ & $\bigl(\frac{-5w^2-w}{2},-14w^2-12w-8\bigr)$\\
$(2,2)$ & $x^3+2$ & $(0,0,0,-1,0)$ \\
$(2,4)$ & $x^3+2$ & $(1,-\frac{1}{2})$ \\
$(2,6)$ & $x^3+2$ & $(\frac{5}{2},-\frac{3}{4})$\\
$(2,8)$ & $x^3+2$ & $(\frac{17}{2},-15)$ \\
$(2,10)$ & $x^3 - x^2 - 1$ & $(-5w^2-3w-3,-5w^2-3w-4)$\\
$(2,12)$ & $x^3 - 2x - 2$ & $\bigl(\frac{12w^2+24w+17}{2}, \frac{-309w^2-546w-348}{4}\bigr)$\\
$(2,14)$ & $x^3 + 2x^2 - 9x - 2$ & $(3w^2 - 7w - 1, -55w^2 + 115w + 26)$\\
\bottomrule
\end{tabular}
\end{table}

\begin{table}
\caption{Curves with prescribed torsion and rank 0 over quartic fields.}
\label{table:rank0quartic}
\begin{tabular}{lll}
\toprule
$(m,n)$ & $f$ & \text{Curve}\\
\midrule
$(1,1)$ & $x^4+8x^2+4$ & $(0,0,0,0,6)$\\
$(1,2)$ & $x^4 + 3x^2 + 1$ & $(0,0,0,1,0)$\\
$(1,3)$ & $x^4 - 2x^2 + 4$ & $(0,0,0,0,4)$\\
$(1,4)$ & $x^4 - 3x^2 + 4$ & $(1,-2)$ \\
$(1,5)$ & $x^4 + 3x^2 + 1$ & $(-2,-3)$\\
$(1,6)$ & $x^4 + 3x^2 + 1$ & $(-2,-12)$ \\
$(1,7)$ & $x^4 - 3x^2 + 4$ & $(-1,-4)$ \\
$(1,8)$ & $x^4 + 26x^2 + 49$ & $(7,-6)$\\
$(1,9)$ & $x^4 - 7x^2 + 4$ & $(3,6)$\\
$(1,10)$ & $x^4 - x^2 + 1$ & $(-5,-24)$\\
$(1,11)$ & $x^4 + 2x^2 + 4$ & $(-w^3-1,-3w^3-8)$\\
$(1,12)$ & $x^4 - x^2 + 1$ & $(43,-210)$\\
$(1,13)$ & $x^4 - 38x^2 + 225$ & $\bigl(\frac{w^3-53w-135}{15}, \frac{3w^3 - 159w - 370}{5}\bigr)$\\
$(1,14)$ & $x^4 - 3x^2 + 4$ & $(-2w^2+5,-2w^2+8)$\\
$(1,15)$ & $x^4 - x^2 + 4$ & $(3,2)$\\
$(1,16)$ & $x^4 + 9x^2 + 9$ & $(-\frac{7}{5},-\frac{12}{25})$\\
$(1,17)$ & $x^4 - x^3 + x^2 - 5x - 4$ & $\bigl(\frac{5w^3-12w^2+9w-40}{4}, -4w^3+2w^2-10w+12\bigr)$\\
$(1,18)$ & $x^4 + 17x^2 + 64$ & $(2w^2+23,-2w^2-22)$\\
$(1,20)$ & $x^4 - 2x^3 + x^2 + 2$ & $(-5,-24)$ \\
$(1,21)$ & $x^4 - x^3 + 2x - 8$ & $\bigl(\frac{-w^3-6w^2-26w+80}{8}, \frac{-53w^3+118w^2-154w+200}{2}\bigr)$\\
$(1,22)$ & $x^4 - 2x^3 - x^2 + 2x + 8$ & $\bigl(\frac{w^3-12w^2+31w-20}{8}, 3w^3 - 13w^2 + 18w - 4\bigr)$\\
$(1,24)$ & $x^4 - 18x^2 - 15$ & $(1,\frac{8}{3})$\\
$(2,2)$ & $x^4 + 3x^2 + 1$ & $(0,0,0,1,0)$\\
$(2,4)$ & $x^4 - 3x^2 + 4$ & $(1,\frac{1}{18})$\\
$(2,6)$ & $x^4 + 2x^2 + 4$ & $(\frac{5}{2},\frac{3}{4})$\\
$(2,8)$ & $x^4 + 6x^2 + 4$ & $(-\frac{7}{3},-10)$\\
$(2,10)$ & $x^4 + 18x^2 + 25$ & $(-\frac{7}{2},-\frac{9}{2})$\\
$(2,12)$ & $x^4 + 9x^2 + 9$ & $(1,\frac{8}{3})$\\
$(2,14)$ & $x^4 - x^3 + 3x^2 + 3x + 2$ & $(w^3-2w^2+3w+3, w^3-2w^2+3w+6)$\\
$(2,16)$ & $x^4 + 2002x^2 + 116281$ & $\bigl(\frac{2329}{2695}, -\frac{366}{2401}\bigr)$\\
$(2,18)$ & $x^4-x^2-8$ & $(2w^{2}+5,-2w^{2}-4)$ \\
$(3,3)$ & $x^4 - x^2 + 4$ & $(1,-1,0,12,8)$ \\
$(3,6)$ & $x^4 - x^2 + 4$ & $(\frac{9}{8},\frac{7}{64})$\\
$(3,9)$ & $x^4 + 294x^2 + 2601$ & $\bigl(\frac{11w^2+129}{24}, \frac{269w^2+2463}{24}\bigr)$\\
$(4,4)$ & $x^4 + 1$ & $(1,\frac{1}{8})$\\
$(4,8)$ & $x^4 + 541x^2 + 72900$ & $\bigl(\frac{431}{690},-\frac{259}{529}\bigr)$\\
$(5,5)$ & $x^4 + x^3 + x^2 + x + 1$ & $(-10,-11)$\\
$(6,6)$ & $x^4 + 5x^2 + 1$ & $(\frac{9}{8},\frac{7}{64})$\\
\bottomrule
\end{tabular}
\end{table}

\begin{sidewaystable}

\vskip 13cm

\begin{center}
\caption{Curves with prescribed torsion and positive rank over quadratic fields.}
\begin{tabular}{llll}
\toprule
$(m,n)$& $f$ or $d$ & Curve& Independent points of infinite order\\
\midrule
$(1,15)$& $x^2-x-86$& $\bigl(\frac{10w+493}{448}, \frac{10w+45}{448}\bigr)$& $\bigl(\frac{-w-274}{3584},\frac{-2455w-20382}{200704}\bigr)$\\
$(1,18)$& $x^2+163x+12$& $\bigl(\frac{25105w + 2071}{216}, \frac{634768555w + 46752805}{7776}\bigr)$& $\bigl(\frac{3673w + 223}{486}, \frac{150110959w + 11056609}{8748}\bigr)$, $\bigl(\frac{-112579w-8293}{6}, \frac{-3011095399w-221775913}{288}\bigr)$\\
$(2,10)$& $55325286553$& $\bigl(-\frac{1001929453}{87419475}, -\frac{1089348928}{87419475}\bigr)$&
$(-\frac{76249664}{9062625}, -\frac{3294239461376}{55961709375})$, $(-\frac{1378903694270734}{47323818070815}, -\frac{233856747339051186962702}{6331823076742017702705})$,\\
& & & $(-\frac{317897024}{55559933}, -\frac{54763043233792}{3408435209751})$, $(-\frac{10158696384}{631362875}, -\frac{66880771114752}{779733150625})$\\
$(2,12)$& $2947271015$& $\bigl(\frac{1024873209359}{27734204981}, -\frac{543206429719981170}{2187369012646489}\bigr)$& rank $\ge4$, see \cite{du}\\
\bottomrule
\end{tabular}
\end{center}
\label{table:rank+quadratic}

\vskip.5cm

\begin{center}
\caption{Curves with prescribed torsion and positive rank over cubic fields.}
\begin{tabular}{llll}
\toprule
$(m,n)$& $f$& Curve& Independent points of infinite order\\
\midrule
$(1,11)$& $x^3-x^2-12$& $\bigl(\frac{-3w^2 + 3w - 8}{4}, -3w^2 + 3w - 12\bigr)$& $(2w^2 - 2w, 3w^2 + 15w - 30)$, $\bigl(4w^2 + 6w + 14,\frac{-29w^2 - 35w - 116}{2}\bigr)$\\
$(1,13)$& $x^3 - x^2 - 2x - 24$& $\bigl(\frac{-w^2 - 3w + 3}{3}, \frac{-5w^2 - 5w - 24}{3}\bigr)$& $\bigl(\frac{12w^2+10w+30}{9},\frac{20w^2 + 88w + 516}{27}\bigr)$\\
$(1,15)$& $x^3 + x^2 - 2x - 6$& $\bigl(\frac{7w^2 + 20w - 58}{8}, \frac{7w^2 + 20w - 66}{8}\bigr)$& $\bigl(5w^2+4w-24, \frac{21w^2 + 108w - 270}{2}\bigr)$\\
$(1,16)$& $x^3 - 5x^2 + 8$& $\bigl(\frac{-w^2 - 2w + 2}{2}, -4w^2 - w + 4\bigr)$& $\bigl(\frac{173w^2 - 62w - 296}{2}, \frac{8039w^2 - 3000w - 13896}{2}\bigr)$\\
$(1,20)$& $x^3 - 17x - 6$& $\bigl(\frac{3w^2 - 3w - 118}{80}, \frac{75w^2 + 114w - 1872}{320}\bigr)$& $\bigl(\frac{-21w^2 - 33w + 522}{64}, \frac{351w^2 + 414w - 8316}{1280}\bigr)$\\
$(2,10)$& $x^3 - x^2 - 3$& $\bigl(\frac{-5w^2 + 2w - 1}{5}, \frac{-42w^2 - 42w - 81}{25}\bigr)$& $\bigl(\frac{9w^2 - 3w + 18}{5}, \frac{30w^2 + 6w + 27}{5}\bigr)$\\
$(2,12)$& $x^3 - x^2 + x - 16$& $\bigl(1, \frac{-532w^2 + 27560w -
106048}{88209}\bigr)$& $\bigl(\frac{38w^2 - 10w + 512}{243}, \frac{1566w^2 +
2810w + 7264}{6561}\bigr)$\\
$(2,14)$ & $x^{3}-x^{2}-166x-536$ & $\bigl(\frac{7w^{2}+108w+322}{26}, \frac{-631w^{2}-8667w-22964}{169}\bigr)$ & 
$\bigl(\frac{315061w^{2}+4358637w+11743820}{3237013}, \frac{-289420914w^{2}-3975201306w-10528526328}{42081169}\bigr)$\\
\bottomrule
\end{tabular}
\end{center}
\label{table:rank+cubic}

\vskip.5cm

\begin{center}
\caption{Curves with prescribed torsion and positive rank over quartic fields.}
\begin{tabular}{llll}
\toprule
$(m,n)$& $f$& Curve& Independent points of infinite order\\
\midrule
$(1,17)$& $x^4 - 2x^3 + 2x^2 + 2x - 4$& $(-w^3 + w^2 + 2w - 1, 3w^3 - 6w^2 - 2w + 6)$& $(-w^3 + 2w^2 + 2w - 2, 2w^3 - 4w^2 - 4w + 4)$\\
$(1,21)$& $x^4 - x^3 - 2x^2 + 10x - 4$& $\bigl(\frac{-2w^3 + 7w^2 - 23w + 17}{7}, \frac{-135w^3 + 440w^2 - 715w + 246}{7}\bigr)$& $\bigl(\frac{8w^3 - 23w^2 + 35w - 4}{7},\frac{-190w^3 + 610w^2 - 984w +316}{7}\bigr)$\\
$(3,9)$& $x^4 - 2x^2 + 4$& $(w^3-5w^2+8w-3, 28w^3 - 59w^2 + 32w + 20)$& $(-w^2 + 4w - 4, 32w^3 - 32w^2 - 52w + 96)$\\
$(4,8)$& $x^4 - 3x^2 + 4$& $(\frac{17}{2},-15)$& $(\frac{5}{2},\frac{25w^2-50}{4})$\\
$(5,5)$& $x^4 + x^3 + x^2 + x + 1$& $(-\frac{88}{93},-\frac{181}{93})$& $(\frac{2}{3},\frac{7}{3})$\\
\bottomrule
\end{tabular}
\end{center}
\label{table:rank+quartic}

\end{sidewaystable}

\clearpage


\begin{thebibliography}{99}


\bibitem{am}
A.~O.~L.\ Atkin, F.\ Morain, \emph{Finding suitable curves for the elliptic curve method of factorization},
Math.\ Comp.\ \textbf{60} (1993), 399--405.

\bibitem{BLR} S.\ Bosch, W.\ L\"utkebohmert, M.\ Raynaud, \emph{N\'eron Models}, Ergebnisse der Mathematik und ihrer Grenzgebiete (3) \textbf{21}, Springer-Verlag, Berlin, 1990.

\bibitem{mag}
W.\ Bosma, J.~J.\ Cannon, C.\ Fieker, A.\ Steel (eds.), Handbook of Magma functions, Edition 2.18 (2011).

\bibitem{bc}
\'E.\ Brier, C.\ Clavier,
\emph{New families of ECM curves for Cunningham numbers}.
in: Proceedings of ANTS IX, Lecture Notes in Comput.\ Sci.\ \textbf{6197}, Springer, Heidelberg, 2010, pp.\ 96--109.


\bibitem{mwr}
J.~E.\ Cremona, \emph{Algorithms for modular elliptic curves}, second edition, Cambridge University Press, Cambridge, 1997.

\bibitem{du}
A.\ Dujella, High rank elliptic curves with prescribed torsion,
\url{http://web.math.hr/~duje/tors/tors.html}.

\bibitem{fuj}
Y.\ Fujita, \emph{Torsion subgroups of elliptic curves in elementary abelian 2-extensions of $\Q$}, J.\ Number Theory \textbf{114} (2005), 124--134.

\bibitem{gol}
D.\ Goldfeld, \emph{Conjectures on elliptic curves over quadratic fields}, in: Number Theory, Carbondale 1979,
Lecture Notes in Math.\ \textbf{751}, Springer, Berlin, 1979, pp.\ 108--118.

\bibitem{jk3}
D.\ Jeon, C.~H.\ Kim, Y.\ Lee,
\emph{Families of elliptic curves over cubic number fields with prescribed torsion subgroups}, Math.\ Comp. \textbf{80} (2011), 579--591.

\bibitem{jk4}
D.\ Jeon, C.~H.\ Kim, Y.\ Lee,
\emph{Families of elliptic curves over quartic number fields with prescribed torsion subgroups}, Math. Comp. \textbf{80} (2011), 2395--2410.

\bibitem{jkp}
D.\ Jeon, C.~H.\ Kim, E.\ Park, \emph{On the torsion of elliptic curves over quartic number fields}, J.\ London Math.\ Soc.\ (2) \textbf{74} (2006), 1--12.

\bibitem{jks}
D.\ Jeon, C.~H.\ Kim, A.\ Schweizer, \emph{On the torsion of elliptic curves over cubic number fields}, Acta Arith.\ \textbf{113} (2004), 291--301.

\bibitem{mjb}
M.\ Juki\'c Bokun,
\emph{On the rank of elliptic curves over $\Q(\sqrt{-3})$ with torsion groups $\Z/3\Z\oplus \Z/3\Z$ and $\Z/3\Z\oplus \Z/6\Z$}, Proc.\ Japan Acad.\ Ser.\ A Math Sci.\ \textbf{87} (2011), 61--64.

\bibitem{Kam1}
S.\ Kamienny, \emph{Torsion points on elliptic curves and $q$-coefficients of modular forms}, Invent.\ Math.\ \textbf{109} (1992), 221--229.

\bibitem{kn}
S.\ Kamienny, F.\ Najman, \emph{Torsion groups of elliptic curves over quadratic fields}, Acta Arith., to appear.

\bibitem{kato}
K.\ Kato,
\emph{$p$-adic Hodge theory and values of zeta functions of modular forms},
in: Cohomologies $p$-adiques et applications arithm\'etiques (III), Ast\'erisque \textbf{295} (2004), 117--290.

\bibitem{katz}
N.~M.\ Katz,
\emph{Galois properties of torsion points on Abelian varieties},
Invent.\ Math.\ \textbf{62} (1981), 481--502.

\bibitem{km}
M.~A.\ Kenku, F.\ Momose, \emph{Torsion points on elliptic curves defined over quadratic
fields}, Nagoya Math.\ J.\ \textbf{109} (1988), 125--149.

\bibitem{len}
H.~W.\ Lenstra, Jr., \emph{Factoring integers with elliptic curves}, Ann.\ of Math.\ \textbf{126} (1987), 649--673.

\bibitem{maz}
B.\ Mazur, \emph{Modular curves and the Eisenstein ideal},  Inst.\ Hautes \'Etudes Sci.\ Publ.\ Math.\ \textbf{47} (1977),  33--186.

\bibitem{mr}
B.\ Mazur, K.\ Rubin, \emph{Ranks of twists of elliptic curves and Hilbert's tenth problem},  Invent.\ Math.\  \textbf{181}  (2010), 541--575.

\bibitem{Merel}
L. Merel, \emph{Bornes pour la torsion des courbes elliptiques sur les corps de nombres}, Invent.\ Math.\ \textbf{124} (1996), 437--449.

\bibitem{mon}
P.~L.\ Montgomery, \emph{Speeding the Pollard and elliptic curve methods of
factorization}, Math.\ Comp.\ \textbf{48} (1987), 243--264.

\bibitem{Pari}
The PARI-group,
{\it PARI/GP mathematics software (version~2.5.0)}, 
Bordeaux, 2011, \url{http://pari.math.u-bordeaux.fr/}.

\bibitem{rab}
F.~P.\ Rabarison, \emph{Structure de torsion des courbes elliptiques sur les corps quadratiques}, Acta Arith.\ \textbf{144} (2010), 17--52.


\bibitem{sil}
J.\ Silverman, \emph{The arithmetic of elliptic curves}, Graduate Texts in Mathematics \textbf{106}, Springer-Verlag, New York, 1986.

\bibitem{stein-thesis}
W.~A.\ Stein, \emph{Explicit approaches to modular abelian varieties}, Ph.D. thesis, University of California, Berkeley (2000).

\bibitem{sag}
W.~A.\ Stein et~al., \emph{{S}age mathematics software (version
  4.7.2)}, The Sage Development Team, 2011, \url{http://www.sagemath.org/}.

\bibitem{sti}
H.\ Stichtenoth, \emph{Algebraic function fields and codes}, Universitext, Springer-Verlag, Berlin, 1993.

\bibitem{yan}
Y.\ Yang, \emph{Defining equations of modular curves}, Adv.\ Math.\ \textbf{204} (2006), 481--508.

\end{thebibliography}
\end{document}